\title{Radiative Decay of Bubble Oscillations in a Compressible Fluid}
\author{A.~M.~Shapiro\thanks{School of Natural Sciences, 
					University of California, Merced, CA }	
					\and M.~I.~Weinstein\thanks{Department of Applied Physics and 							Applied Mathematics, Columbia University, New York, NY}}
\begin{document}

\maketitle 

\begin{abstract}
Consider the  dynamics of a gas bubble in an inviscid,  compressible liquid with surface tension. Kinematic and dynamic boundary conditions couple the bubble surface deformation dynamics with the dynamics of waves in the fluid. 
This system has a spherical equilibrium state, resulting from the balance of the pressure at infinity and the gas pressure within the bubble.
We study the linearized dynamics about this equilibrium state in a center of mass frame.
 We prove that the velocity potential and bubble surface perturbation satisfy point-wise in space exponential time-decay estimates. The time-decay rate is governed by the imaginary parts {\it scattering resonances}. These are characterized by  a non-selfadjoint spectral problem or as  pole singularities in the lower half plane of the analytic continuation of a resolvent operator from the upper half plane, across the real axis  into the lower half plane. 
The time-decay estimates are a consequence of resonance mode expansions for the velocity potential and bubble surface perturbations.
The weakly compressible case (small Mach number, $\epsilon$),  is a singular perturbation of the incompressible limit. The scattering resonances which govern the remarkably slow time-decay, are {\it Rayleigh resonances}, associated with capillary waves, due to surface tension, on the bubble surface, which impart their energy slowly to the unbounded fluid. Rigorous results,  asymptotics and high-precision numerical studies, indicate that the Rayleigh resonances which are closest to the real axis satisfy
 ${\bigl| \frac{\Im \lambda_\star(\epsilon)}{\Re \lambda_\star(\epsilon)}\bigr|}=\cO\!\left(\exp(-\kappa\ \We\ \epsilon^{-2})\right),\ \kappa>0$. Here, $\We$ denotes the Weber number, a dimensionless ratio comparing inertia and surface tension.
  To obtain the above results we prove a general result estimating the Neumann to Dirichlet map for the wave equation, exterior to a sphere.
\end{abstract}

\begin{keywords}

\end{keywords}

\begin{AMS}

\end{AMS}

\pagestyle{myheadings}
\thispagestyle{plain}
\markboth{A.~M.~SHAPIRO AND M.~I.~WEINSTEIN}{RADIATIVE DECAY OF BUBBLE OSCILLATIONS IN A COMPRESSIBLE FLUID}
%
%
%\tableofcontents
%%%%%% previously in one file paper.tex
%\begin{small}
\section{Introduction} \label{sec:intro}
%\input{intro} %1
%%%-------------intro.tex------------------------------%%%

Consider a gas bubble, surrounded by an unbounded, inviscid and incompressible fluid with surface tension. This system has a family of equilibria, consisting of translates of a   spherical gas bubble, whose radius is set by a balance of pressure at infinity with pressure inside the bubble.  
In this paper we prove pointwise in space  time-decay estimates for the linearized evolution near this family of equilibria. We  also obtain very precise information on the rate of decay.\medskip
 
\noindent {\bf Background:}\ The dynamics of gas bubbles in a liquid play an important role in many fields.  Examples include underwater explosion bubbles~\cite{Keller:1956gc}  ($\unit[15]{cm}$), seismic wave-producing bubbles in magma~\cite{Ripepe:1999fu}, bubbly flows behind ships and propellors ($\unit[1]{cm}$), bubbles at the ocean surface~\cite{Longuet-Higgins:1989ux} ($\unit[0.015-0.5]{cm}$), microfluidics~\cite{Xu:2007fk} ($\unit[50]{\mu m}$), bubbles used as contrast agents in medical imaging~\cite{Ferrara:2007xr} ($\unit[2]{\mu m}$), and sonoluminescence~\cite{Brenner:1995yu,Putterman:1998gi} ($\unit[0.1-10]{\mu m}$). For a discussion of these and other applications of bubble dynamics, see the  excellent review articles~\cite{Leighton:2004bi,Prosperetti:2004cz} and  references cited therein.

The dynamics of a bubble in a liquid are governed by the  compressible Navier-Stokes equations in the liquid external to the bubble, a description of the gas within the bubble, and boundary conditions (kinematic and dynamic) which couple the fluid and gas. We assume the gas inside the bubble to be at a uniform pressure throughout and to satisfy a thermodynamic law relating the bubble pressure to the bubble volume.
Rayleigh initiated  the study of bubble dynamics 
and derived an equation for the radial oscillations of a spherically symmetric gas bubble in an incompressible, inviscid liquid with surface tension~\cite{Rayleigh:1917zm,Lamb:1993mu}.
This problem has a  spherical equilibrium, balancing the pressure at infinity and the pressure within the bubble. A general (asymmetric) perturbation of this spherical equilibrium will excite all harmonics and Rayleigh showed, in the linear approximation, that each harmonic executes {\it undamped} time-periodic oscillations~\cite{Lamb:1993mu}. 
 With viscosity present, the pulsating bubble oscillations are  damped and  the spherical equilibrium is approached.

A second very important damping mechanism, energy preserving in nature, is due to compressibility of the fluid. In a non-viscous and compressible fluid, a perturbation of the equilibrium bubble will, due to coupling  at the gas-liquid interface, generate acoustic waves in the fluid which, in an unbounded region, propagate to infinity. The bubble dynamics acquire an effective damping, due to energy transfer to the fluid, and its propagation to infinity. If surface tension is included asymmetric modes should damp  and the bubble shape should approach that of a sphere as time advances.

Keller and co-workers~\cite{Keller:1956gc,Epstein:1972uc,Keller:1980rm} modeled slight compressibility of the fluid by the linear wave equation, exterior to the bubble. Both the bubble interior pressure law and the boundary conditions are kept as in the incompressible case.  In the spherically symmetric setting this leads to an ODE, which captures acoustic radiation damping. A systematic derivation of the model of Keller \etal, in the spherically symmetric setting, was presented by Prosperretti-Lezzi~\cite{Prosperetti:1986el,Lezzi:1987qr}.

\medskip

 We expect, for the inviscid, compressible problem  with surface tension, that the family of translates of the spherical equilibrium is (locally) nonlinearly asymptotically stable. Specifically, a small perturbation of the equilibrium spherical bubble, will induce  translational motion of the bubble and deformation of its surface. We expect that, {\it in a frame of reference which  moves with the bubble center of mass}, $\com(t)$, a small perturbation of the spherical bubble will damp toward a spherical equilibrium shape. 
   As in many studies of asymptotic stability for coherent structures in  spatially-extended conservative nonlinear PDEs, linear decay estimates of the type established in this work can be expected to play a role in the estimation of the convergence to equilibrium for the nonlinear dynamics.

\subsection{PDEs for a gas bubble in a compressible liquid} 
\label{sec:linprob}
Consider a gas bubble, occupying a bounded region $B(t)$, surrounded by an  inviscid and  compressible fluid with surface tension. We consider the boundary of the bubble, $\D B(t)$, to be  parametrized by a function $\vect{R}: (\alpha,t)\mapsto\vect{R}(\alpha,t)\in\D B(t)\subset\R^3$, with parameter $\alpha$, \eg\ spherical coordinates.

The time-evolution of the fluid is governed by the system:
\begin{subequations}  \label{eq:euler1}
\begin{align}
\D_{t} \fixed{\vect{u}} + \left( \fixed{\vect{u}} \cdot \grad \right) \fixed{\vect{u}} + \frac{1}{\fixed{\rho}} \grad \fixed{p}(\rho) &= \vect{0}, && \fixed{\vect{x}}\in \R^{3}\setminus \fixed{B}(t) \label{momentum} \\
 \D_{\fixed{t}} \fixed{\rho} + \div (\fixed{\rho}\fixed{\vect{u}}) &= \vect{0}, && \fixed{\vect{x}}\in \R^{3}\setminus \fixed{B}(t) \label{mass} \\
(\fixed{\vect{u}}\circ\vect{R}) \cdot \unitn &= \D_{t}  \fixed{\vect{R}} \cdot \unitn, &&   \D\fixed{B}(\fixed{t}) \label{kinematic}\\
\left. p_{\rm bubble}\right|_{\D B(t)}\ -\ \left. p_{\rm fluid}\right|_{\D B(t)} &= 2\sigma\, H[\vect{R}], &&  \D\fixed{B}(\fixed{t}),
\label{dynamic} \end{align}
\end{subequations}
where $H[\vect{R}]=\frac{1}{2}\,\div\unitn$ denotes the mean curvature at location $\vect{R}\in\D B(t)$; see, for example,~\cite{Lamb:1993mu} (Article~275, Equation~5), or~\cite{Carmo:1976ve} (Section 3-3).

The pressure within the fluid is assumed to obey an equation of state:
%\begin{equation}
$p=p_{\rm fluid}=p(\rho)$.
%\label{eqnofstate}
%\end{equation}
%
Equations~\eqref{momentum} and~\eqref{mass} express conservation of momentum and mass.
Equation~\eqref{kinematic} is the kinematic boundary condition, {\it i.e.} the normal velocity, $\D_t\vect{R}\cdot {\unitn}$,   of the material point on the bubble surface moves with the normal velocity of the fluid. The Young-Laplace boundary condition, also called the dynamic boundary condition,~\eqref{dynamic}, expresses that the jump in pressure at the fluid bubble interface is proportional to the mean curvature~\cite{Leal:2007fk}:
 \begin{equation}
\left. p_{\rm bubble}\right|_{\D B(t)}\ -\ \left. p_{\rm fluid}\right|_{\D B(t)}\ =\ {\rm Surface\ Tension}\ \times \ (2 \times {\rm Mean\ Curvature}).
 \label{p-jump}\end{equation}
 
\noindent We assume that the   pressure within the bubble is spatially uniform and given by the polytropic gas law~\cite{Longuet-Higgins:1989ux}:
\beq
 \left. p_{\rm bubble}\right|_{\D B(t)} = \fixed{P}_{\fixed{B}}= \frac{k}{\abs{\fixed{B}(\fixed{t})}^{\gamma}} =P_{\text{eq}} \left( \frac{\frac{4\pi}{3}a^{3}}{\abs{\fixed{B}(\fixed{t})}} \right)^{\gamma},\ \ \gamma>1.\label{adiabatic}
\eeq
The pressure within the fluid is assumed to satisfy a general smooth
 relation of the form: $p=p(\rho)$, so that the system is determined by only one state variable.

Finally we assume that the initial velocity is irrotational, $\grad\wedge \vect{u}_0=\vect{0}.$ It follows that the velocity field remains irrotational for all $t\ge0$; see, for example,~\cite{Lamb:1993mu}, Article 33. Thus, there is a single-valued velocity potential $\Phi$, such that
$
 \vect{u}(\vect{x},t) = \grad \Phi.
$\\
 
\noindent {\bf Equilibrium solutions:}\ Equations~\eqref{eq:euler1} have a spherically symmetric equilibrium solution:
\begin{align}
\vect{u} &= \vect{0},\ \ \vect{R} = a \,\unitr =\ a\ \frac{\vect{x}-\vect{x_0}}{|\vect{x}-\vect{x_0}|}\ \  \ p = p_{\infty},\ \ \ \ \rho = \rho_{\infty}.
\end{align}
The equilibrium bubble radius, $a$, is uniquely determined via the dynamic boundary condition%
\begin{equation}
\frac{k}{\left( \frac{4\pi}{3}a^{3} \right)^{\gamma}} - p_{\infty} = \frac{2\sigma}{a}\qquad {\rm or}\qquad
P_{\text{eq}}=p_{\infty} + \frac{2\sigma}{a}.
\label{eq:Peq}
\end{equation}

%%% linearized equations
We consider these dynamics in the linear approximation. Introduce spherical coordinates, $(r,\theta,\phi)=(r,\Omega)$,  with the origin chosen to be the bubble center of mass, $\com(t)$. 
We express a small perturbation of the spherical bubble as:
\begin{align}
\Phi(r,\Omega,t)\ &=\ {\rm constant}\ +\ \Psi(r,\Omega,t),\ \ \ \ \ r=|\vect{x}-\com(t)|,\label{Psi-def}\\
\vect{R}(t,\Omega)\ &=\ \bigl(\ 1+\beta(\Omega,t)\ \bigr)\ 
\frac{\vect{x}-\com(t)}{|\vect{x}-\com(t)|}.\label{beta-def}
\end{align}
In Appendix~\ref{app:derivations} we re-write the system~\eqref{eq:euler1}, relative to coordinates centered at $\com(t)$. 
In  the linear approximation, 
$\com(t)=\com(0)$\footnote{At nonlinear order, $\com(t)$ will typically evolve.} and the nondimensional system of equations, linearized  about the spherical equilibrium bubble are:
\begin{subequations} \label{eq:n3linear}
\begin{alignat}{2}
\epsilon^{2} {\partial_t}^2 \Psi - \Delta\Psi &= 0 &\qquad&  r > 1, \label{eq:lin3wave}\\
\Psi_r &= \beta_t &&  r=1, \label{eq:lin3kinematic}\\
\Psi_t &= 3\gamma \left( \frac{\Ca}{2} + \frac{2}{\We} \right) \left\langle\ \beta,Y_{0}^{0}\ \right\rangle\ Y_{0}^{0}
 - \frac{1}{\We} \left(2+ \Delta_S\right) \beta&& r=1, \label{eq:lin3bernoulli} \\
\left\langle\ \beta\ , Y_{1}^{m} \ \right\rangle_{L^2(S^2)} &= 0.&& |m|\le1\label{com-constraint}
\end{alignat}
\end{subequations}
Here, $\epsilon$ denotes the Mach number ($\Ma=\epsilon$\, is used in the derivation of the non-dimensional equations in Appendix~\ref{app:derivations}), $\Ca$, the Cavitation number, and $\We$, the Weber number.

We shall focus on the initial-boundary value problem for~\eqref{eq:n3linear} with data corresponding to an initial perturbation of only the bubble surface:
\begin{align}
&\Psi(r,\Omega,t=0)= \D_t\Psi(r,\Omega,t=0) = 0,\ \ \ \ \beta(t=0,\Omega)\ \text{ given and sufficiently smooth}.
\label{ib-data}
\end{align}
\noindent $\Delta_S$ denotes the Laplacian on $S^2$, in spherical coordinates given by~\eqref{Delta_S}. The spherical harmonics, $Y_l^m(\Omega)$,
are eigenfunctions: $-\Delta_S Y_l^m = l(l+1)Y_l^m, \ l\ge0,\ |m|\le l$, forming a complete orthonormal set in $L^2(S^2)$ with respect to the inner product $\langle \alpha_1,\alpha_2 \rangle_{L^2(S^2)} $; see Section~\ref{app:hankel}.
 The orthogonality conditions~\eqref{com-constraint}, derived in Appendix~\ref {app:derivations}, express our choice of coordinates (in the linearized approximation)  placing the origin at the bubble center of mass.

%%
%~~~~~~~THESIS HEADING ONLY------------->\section{Overview of results and discussion}
\subsection{Overview of results and discussion}\label{section:overview}
We conclude this section with an overview and discussion of results.
\begin{remunerate} %SIAM style Arabic numbered list
%\begin{enumerate}
%
\item {\bf Time-decay of solutions to wave equation on $\R^3\setminus\{|\bx|\le1\}$ with time-dependent Neumann data:}\ Theorem~\ref{theorem:NtD} is a general result, of independent interest, on the time-decay and resonance expansion of solutions to the initial-boundary value problem for the wave equation on $\R^3-S^2$. The data prescribed on $S^2$ are assumed sufficiently smooth and exponentially decaying with time. Theorem~\ref{theorem:NtD} generalizes the results of~\cite{Tokita:1972lr,Wilcox:1959qy}; see also~\cite{LP:89,TZ:00}.  
\item {\bf Exponential time-decay estimates for the bubble surface perturbation (Theorem~\ref{thm:linsol}):} 
Theorem~\ref{theorem:NtD} on the Neumann to Dirichlet map, together with the detailed information we obtain on the locations of scattering resonances in the lower half plane,  is used  to prove that the solution of the initial value problem~\eqref{eq:n3linear} with initial data~\eqref{ib-data}  decays exponentially to zero, pointwise, at a rate $\cO\!\left(e^{-\Gamma(\epsilon) t}\right) $, $\Gamma(\epsilon) >0$, as $t$ tends to infinity.  Moreover, the linearized velocity potential, $\Psi(r,\Omega,t)$ and bubble surface perturbation, $\beta(\Omega,t)$,  satisfy resonance expansions in terms of outwardly radiating states of the scattering resonance problem. The expansion converges  in $C^2(K\times\R_+)$, where $K$ denotes  any compact subset of $|\bx|\ge 1$. 
\item {\bf Scattering resonances and radiation damping:}  %\\ 
The exponential rate of decay, $\Gamma(\epsilon)=|\Im\lambda^{\pm}_\star(\epsilon)|$, is determined via the {\it scattering resonance problem},~\eqref{eq:srp}, a non-selfadjoint spectral problem associated with the 
time-harmonic solutions of the linearized compressible Euler equations + boundary conditions on $|\bx|=1$, with {\it outgoing radiation conditions} at infinity. Two families of scattering resonances, associated with the Helmholtz equation 
 $\Delta \Psi+\omega^2\Psi=0$ for $|\bx|>1$,  appear in the linearized problem.\ 
{\it  Rigid resonances, $\{\omega_{l,k}(\epsilon)\}_{l\ge0, 1\le k\le l+1}$} (Theorem~\ref{thm:rigid-summary}), are associated with  Neumann ({\it sound soft}) boundary conditions imposed on the sphere, and outgoing radiation conditions at infinity and 
{\it deformation resonances, $\{\lambda_{l,k}(\epsilon)\}_{l\ge0, 1\le k\le l+2}$} (Theorem~\ref{thm:def-res-general}), associated with the  hydrodynamic boundary conditions on the sphere, responsible for the deformation of the bubble-fluid interface,  and outgoing radiation conditions at infinity.  Theorem~\ref{thm:big-l} implies that for each fixed $\epsilon>0$, there is a strip containing the real axis, in which there are no scattering resonances; hence $|\Im\lambda^{\pm}_\star(\epsilon)|>0$. 
\item{\bf Asymptotics of deformation resonances and Fermi's Golden Rule:}  In the incompressible limit, $\epsilon\downarrow0$, 
 all resonances (rigid and deformation) have imaginary parts which tend to minus infinity, except for the sub-family of deformation resonances called  {\it Rayleigh resonances}, $\{\lambda_l^\pm(\epsilon)\}_{l\ge0}$.  Rayleigh resonances are scattering resonances in the lower half plane to which the real Rayleigh eigenfrequencies (of undamped oscillations in the incompressible problem, $\epsilon=0$)  perturb upon inclusion of small compressibility, $\epsilon>0$. 
 Their detailed asymptotics for $\epsilon$ small is given  in Theorem~\ref{thm:2smallep}. In particular, we find for the\\ 
 {\it  Imaginary parts of the Rayleigh Deformation Resonances:}
\begin{align}
\Im \lambda_{l=0}^\pm(\epsilon)\ &=\ -\epsilon \left[\ \frac{1}{2} \left( \frac{3\gamma}{2}\Ca + 2(3\gamma-1)\frac{1}{\We} \right) \right],\nn\\
\Im \lambda_l^\pm(\epsilon)\ &=\  -\frac{1}{\epsilon} \left[  \frac{1}{2} \left[(l+2)(l-1) \right]^{l+1} (l+1)^{l} \left[\frac{2^{l}l!}{(2l)!}\right]^{2}  \left( \frac{\epsilon^{2}}{{\We}} \right)^{l+1} \!+ \bigO_{l}\!\left( \left( \frac{\epsilon^{2}}{\We}\right)^{l+2} \right) 
\right]%,&& l=2,3,\dotsc
 \label{Im-part_l}
\end{align}
for $l=2,3,\dotsc.$
The proof of~\eqref{Im-part_l} relies on use of detailed properties of spherical Hankel function and, in particular, a subtle result on the Taylor expansion of the function 
$
 z\ \mapsto\ G_l(z)\ =\  z\,\D h_l^{(1)}(z)/h_l^{(1)}(z)
 $
  in a neighborhood of $z=0$; see Proposition~\ref{thm:imagterm}.
 \\ \\
The infinite set of pairs of  (real) Rayleigh eigenfrequencies $\{\lambda_l^\pm(0)\}_{l=0,2,\dots}$ may be viewed as: {\it embedded eigenvalues in the continuous spectrum of the unperturbed ($\epsilon=0$, incompressible) spectral problem}. The negative imaginary part in~\eqref{Im-part_l} is an instance of the {\it Fermi Golden Rule}. An expression coined originally in the context quantum electrodynamics~\cite{Cohen-Tannoudji}, it refers to the 
 induced damping of an ``excited state'' (the bubble perturbation), due to coupling of an ``atom'' (the deforming bubble) to a field (wave equation); see also, for example,~\cite{Weisskopf-Wigner:30,RS4,SW:98}.
\item {\bf Scaling behavior of the decay rate, $\abs{\Im \lambda^{\pm}_\star(\epsilon)}$  (Section~\ref{sec:longlivedres-thm}):}\  Theorem~\ref{thm:2smallep},  asymptotics and high-precision numerics show that the exponential {\it rate} of decay is given by 
$|\Im \lambda^{+}_\star(\epsilon)|=|\Im \lambda^{-}_\star(\epsilon)| >0$, where $\lambda^\pm_\star(\epsilon)$ are the {\it scattering resonance energies} closest to the real axis, and in the lower half plane. 
%This decay rate
These resonances
$
\lambda^{\pm}_\star(\epsilon)\ \sim\ \lambda_{l_\star(\epsilon)}^\pm,\ 
 {\rm where}\ \ l_\star(\epsilon)=
 \cO\left( \epsilon^{-2}\We\right).
$
Moreover, an asymptotic study of the results of Theorem~\ref{thm:2smallep} yields:
\begin{equation}
\Re\lambda^{\pm}_\star(\epsilon) = \frac{1}{\epsilon}\,\cO\!\left( \frac{\We}{\epsilon^2}\right),\ \ \ \ \ \Im\lambda^{\pm}_\star(\epsilon)= \frac{1}{\epsilon}
\,\cO\!\left( \frac{\We}{\epsilon^2}\ e^{-\kappa
\frac{\We}{\epsilon^2}}\right),\ \ \kappa>0.
\nn\end{equation}

In contrast, the monopole (spherically symmetric) resonance has an imaginary part which is $\cO(\epsilon)$. 
This remarkably slow rate of decay is related to the scattering resonance problem being a singular perturbation problem in the  (incompressible) limit of small $\epsilon$; the wave equation $\epsilon^2\D_t^2\Phi=\Delta\Phi$ reduces to Laplace's equation $\Delta\Phi=0$ as $\epsilon\to0$. Figure~\ref{fig:resonances} displays, for a particular small choice of $\epsilon$, a range of resonance energies including those located closest to the real axis. Physically, these very slowly decaying bubble shape modes are associated with capillary waves on the bubble surface, are excited only by asymmetric perturbations, and very slowly transfer their energy to the infinite fluid. 
%%%
\footnote{We remark on an interesting example, where scattering resonances converge to the real axis. Stefanov and Vodev~\cite{Stefanov:1994fr} show, for the system of linear elasticity
 on the exterior of a ball  in $\R^3$, that 
 the scattering resonances converge to the real axis exponentially fast, as the spherical harmonic index, $l$, tends to infinity.  Consequently, the time-decay of solutions is not exponentially fast;
see~\cite{Stefanov:1994fr,Stefanov-Vodev:95,Stefanov-Vodev:96,Burq-Zworski:01,Stefanov:01} and references cited therein.
The modes associated with the resonances which come ever closer to the real axis are called
 {\it Rayleigh surface waves}, a decay rate limiting mechanism analogous to the capillary surface waves ({\it Rayleigh resonance modes}) of the bubble problem we consider in this article. }
\begin{figure}[!htb] 
%\centering
%   \includegraphics[]{figures/rayleigh-l-25-60-by-5}
%   \includegraphics[]{figures/def-res-20-52-by-4}
%   \epsfig{file=figures/ray.eps,scale=0.8}
\hspace{-1.5em}   \includegraphics[width=\textwidth]{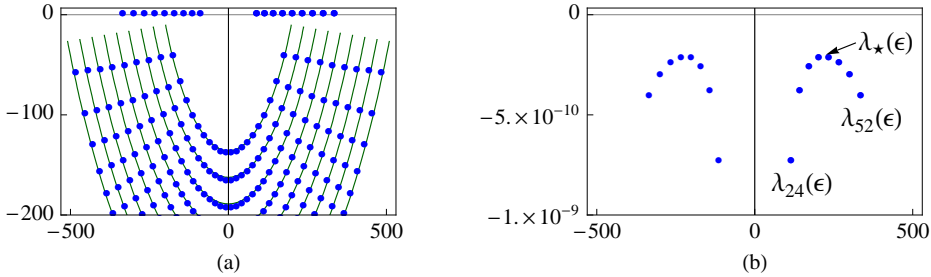}
   \caption{\small (a)\ Numerically computed scattering resonances associated with spherical harmonics $l=20,24,28,\dotsc,52$ for $\epsilon=0.1,\,\We=1$.
 {\it Rayleigh resonances} are those located just below the real axis. 
(b)  Vertical axis is scaled to show the resonances  closest to the real axis.}
   \label{fig:resonances}
\end{figure}
%
%{\it While the longest lived resonances we find are remarkably close to the real axis, there is a strip in the lower half plane, including the real axis, which is free of any resonances.} This implies uniform exponential rate of decay; see Theorem~\ref{thm:linsol}.
% 
\item {\bf Monopole \emph{vs.}~Multipole radiation:}\ A discussion of emission of acoustic radiation in the physics literature (see~\cite{Longuet-Higgins:1989ux} and references cited therein) is based on a heuristic energy balance argument, which assumes that the dominant emission of acoustic radiation is through the monopole ($l=0$, purely radial) mode. This assumption implies decay of the bubble perturbation on the time scale of order 
$ \tau_{\rm monopole}(\epsilon) \sim \epsilon^{-1}$.
%\label{tau-mono}
Our results demonstrate that there are indeed non-symmetric vibrational modes of a bubble, which have the much longer lifetime
 \begin{equation*}
 \tau(\epsilon) \sim \epsilon\ \smash{(\epsilon^{-2} \We)}\ \exp\left(\kappa\ \epsilon^{-2} \We\right) \gg \tau_{\rm monopole}(\epsilon),\ \ \kappa>0.
% \label{tau}
 \end{equation*}
  Of course, in many systems one would expect viscosity (not included in the modeling) to contribute a dominant correction to the imaginary part.

We note a relation of these results to those in~\cite{Lighthill:62} and~\cite{Moore-Spiegel:64}, who used energy flux arguments to study the radiation of acoustic energy in compressible atmospheres.
%
%
%\end{enumerate}
\end{remunerate}

\medskip 
\noindent{\bf Some future directions:} In addition to the question of nonlinear asymptotic stability of the family of spherical equilibrium bubbles, we remark that there is a rich class of solutions in the radially symmetric setting. In the case of radial symmetry, Rayleigh showed that the dynamics of a spherical bubble in an incompressible liquid exactly reduce to a nonlinear ordinary differential equation for the time-evolution of the radius. This equation admits the spherical equilibrium (constant radius) state as well as time-periodic (radially pulsating) states. Plesset~\cite{Plesset:1949vo} extended the analysis in the spherically symmetric case to include viscosity. 
It is of interest to study the dynamics near the spherically symmetric time-periodic states of the Rayleigh-Plesset equations.
We expect that these would be unstable; periodic oscillations would couple to continuous spectral modes, resulting in radiation-damped ``breather'' oscillations; see, for example,~\cite{SW:99} and references cited therein. The period map or monodromy operator associated with the linearization about such a state, for $\epsilon=0$, would have an embedded Floquet multiplier on the unit circle, which would perturb to an unstable  Floquet multiplier outside the unit circle~\cite{Sigal:93,MSW:00}.

Finally, we remark that the problem we consider is one of a very large class, involving an infinite dimensional conservative system comprised of two coupled subsystems: one subsystem has a discrete number of degrees of freedom (here the mode amplitudes of the spherical harmonics of the bubble surface) and one has a continuum of degrees of freedom, the velocity potential governed by the wave equation.  While there has been significant progress on such systems, where the number of ``soliton'' degrees of freedom of the discrete subsystem is finite (for example, see~\cite{SW:99,KKS:99,W:07} and references cited therein), systems like the bubble-fluid system, which involve the coupling of infinite dimensional systems, are central in physical and engineering science and are an important future direction.
\medskip

\noindent{\bf Acknowledgements:} The authors thank D. Attinger, J.B. Keller, A. Soffer,  E. Spiegel, J. Xu and M. Zworski for stimulating discussions. This research was supported in part by US National Science Foundation grant DMS-07-07850 and NSF grant DGE-0221041. The Ph.D. thesis research of the first author was supported, in part,  by the IGERT Joint Program in Applied
Mathematics and Earth and Environmental Science at Columbia University. Some of the discussion of this paper, appears in greater detail in~\cite{ASK-thesis:10}.

%%%%%%%%%%%%
\subsection{Some oft used notation}
\begin{itemize}
\item $\langle \alpha ,\beta \rangle = \langle \alpha ,\beta \rangle_{L^2(S^2)}\ =\ $ inner product on $L^2(S^2)$;\quad  Section~\ref{app:hankel}.
\item $\Delta_S = \text{Laplacian on } S^2$;\quad Equation~(\ref {Delta_S}),\ \ $Y_{l}^{m}(\theta,\phi) = Y_{l}^{m}(\Omega)$, spherical harmonic;\quad Appendix~\ref {app:Ylm}.
\item Dimensionless quantities: 
	$\We=$ Weber number,  $\Ca=$ Cavitation number, $\epsilon = \Ma = $ Mach number; \quad Appendix~\ref {app:derivations} 
%\item \beq \label{eq:rlhat-notation} 
%%\hspace{3cm}		\!\!\!\!\!\!\!\!\!
%\rlhat = \begin{cases} 
%			\frac{3\gamma}{2}\Ca + 2(3\gamma-1)\frac{1}{\We}, & l=0 \\
%			\frac{1}{\We}\ (l+2)(l-1), & l\ge 1,\\
%		\end{cases} \mspace{300mu}
%		\eeq
\end{itemize}
\beq \label{eq:rlhat-notation} 
%\hspace{3cm}		\!\!\!\!\!\!\!\!\!
\rlhat = \begin{cases} 
			\frac{3\gamma}{2}\Ca + 2(3\gamma-1)\frac{1}{\We}, & l=0 \\
			\frac{1}{\We}\ (l+2)(l-1), & l\ge 1,\\
		\end{cases} \mspace{300mu}
		\eeq
%%% end section 1 introduction
%\input{energy} %2
%%%-------------energy.tex------------------------------%%%
%%~~~~~~~THESIS HEADING ONLY------------->\subsection {Conservation of Energy}
\section{Conservation of Energy} 
\label{sec:consenergy}
An important role in our analysis of time-decay of solutions of the initial-boundary value problem for~\eqref{eq:n3linear} is played by the following conservation law. We use it to establish the scattering resonance frequencies as lying in the lower half plane; see the proof of Proposition~\ref{thm:def-res-general}. 

\begin{proposition} \label{thm:energy}
Let $\left(\ \Psi(\bx,t),\ \beta(\Omega,t)\ \right)$ denote a smooth complex-valued solution of the linearized equations.
\begin{romannum}
%\begin{enumerate}
\item Then if $\Psi$ decays sufficiently rapidly as $|x|\to\infty$, then the following functional is time-invariant
\begin{equation}
{\cal E} = \int_{\abs{x}\ge{1}} \hspace{-.68em}\left(\epsilon^{2} \abs{\D_t\Psi}^{2} + \abs{\grad\Psi}^{2}\right) \,dx + 3\gamma \left( \tfrac{\Ca}{2} + \tfrac{2}{\We} \right) \bigabs{\langle\beta,Y_0^0\rangle}^2+\tfrac{1}{\We}\langle (-\Delta_S-2)\beta,\beta\rangle,
\label{CofE}
\end{equation}
where $\langle\alpha,\beta\rangle$ denotes the inner product on $L^2(S^2)$.
\item
 The energy, ${\cal E}$,  can also be expressed as:
 \begin{multline}
 {\cal E} =
\int_{\abs{x}\ge{1}} \left(\epsilon^{2} \abs{\D_t\Psi}^{2} + \abs{\grad\Psi}^{2}\right) dx\ +\ \left( \tfrac{3\gamma}{2}\Ca + 2(3\gamma-1)\tfrac{1}{\We} \right)\;\bigabs{\langle\beta,Y_0^0\rangle}^2\ \\
+\
 \frac{1}{\We}\ \sum_{l\ge2}\sum_{|m|\le l} (l+2)(l-1) \; \bigabs{\langle\beta,Y_l^m\rangle}^2.
 \label{CofE-pos1}
 \end{multline}
Furthermore, 
%if the surface tension is sufficiently small (in particular, $\We> 2 / (3\gamma)\ (1 / \Eu)$ ), then 
${\cal E}$ is positive definite on solutions of the linearized equations~\eqref{eq:n3linear} since $\gamma>1$.
%\end{enumerate}
\end{romannum}
\end{proposition}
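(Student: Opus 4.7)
}

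For part (i), the plan is to differentiate $\mathcal{E}$ in $t$ and verify, using the three linearized equations, that $d\mathcal{E}/dt=0$. The bulk contribution is handled by the standard energy identity for the wave equation: writing
\[
\frac{d}{dt}\int_{|x|\ge 1}\!\!\left(\epsilon^{2}|\D_t\Psi|^{2}+|\grad\Psi|^{2}\right)dx \;=\; 2\Re\!\int_{|x|\ge 1}\!\!\left(\epsilon^{2}\D_t^{2}\Psi\,\overline{\D_t\Psi}+\grad\D_t\Psi\cdot\overline{\grad\Psi}\right)dx,
\]
I would integrate by parts in the second term. Since the outward unit normal of the exterior domain $|x|\ge 1$ points in $-\unitr$, this produces a bulk term $\int(\epsilon^{2}\D_t^{2}\Psi-\Delta\Psi)\,\overline{\D_t\Psi}\,dx$, which vanishes by~\eqref{eq:lin3wave}, and a boundary term $-2\Re\int_{S^{2}}\D_t\Psi\,\overline{\D_r\Psi}\,dS$. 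Here the decay hypothesis on $\Psi$ is exactly what is needed to kill the boundary contribution at infinity.

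Next I substitute the kinematic condition $\D_r\Psi|_{r=1}=\beta_t$ and the dynamic condition~\eqref{eq:lin3bernoulli} into this surface integral, obtaining
\[
-2\Re\!\int_{S^{2}}\!\!\D_t\Psi\,\overline{\beta_t}\,dS \;=\; -2\Re\!\left[\,3\gamma\!\left(\tfrac{\Ca}{2}+\tfrac{2}{\We}\right)\langle\beta,Y_0^0\rangle\,\overline{\langle\beta_t,Y_0^0\rangle}\;+\;\tfrac{1}{\We}\,\langle(-\Delta_S-2)\beta,\beta_t\rangle\,\right].
\]
On the other hand, a direct time-differentiation of the two surface terms in~\eqref{CofE}, using the symmetry of $-\Delta_S-2$ on $L^{2}(S^{2})$, produces exactly the negatives of these two quantities. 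Adding the two contributions yields $d\mathcal{E}/dt=0$.

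For part (ii), I would expand $\beta(\Omega,t)=\sum_{l,m}\hat\beta_{l}^{m}(t)\,Y_{l}^{m}(\Omega)$ with $\hat\beta_{l}^{m}=\langle\beta,Y_{l}^{m}\rangle$ and use $-\Delta_S Y_{l}^{m}=l(l+1)Y_{l}^{m}$, so that
\[
\langle(-\Delta_S-2)\beta,\beta\rangle \;=\; \sum_{l\ge 0}\sum_{|m|\le l}(l-1)(l+2)\,\bigabs{\hat\beta_{l}^{m}}^{2}.
\]
The $l=0$ mode contributes $-2|\hat\beta_{0}^{0}|^{2}$, which combines with $3\gamma(\tfrac{\Ca}{2}+\tfrac{2}{\We})|\hat\beta_{0}^{0}|^{2}$ to give the coefficient $\tfrac{3\gamma}{2}\Ca+\tfrac{2(3\gamma-1)}{\We}$; the $l=1$ modes are eliminated by the center-of-mass constraint~\eqref{com-constraint}; and for $l\ge 2$ the coefficients $(l-1)(l+2)/\We$ are manifestly positive. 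This yields~\eqref{CofE-pos1}, and positive-definiteness follows immediately since $\gamma>1$ makes $3\gamma-1>0$ and every $l\ge 2$ coefficient is nonnegative (together with the nonnegativity of the Dirichlet energy).

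The main bookkeeping obstacle is the sign and conjugation bookkeeping in the complex-valued energy identity, and in particular getting the sign of the outward normal correct on the exterior boundary $\{r=1\}$ so that the surface term has the right sign to cancel the time-derivative of the $\beta$-dependent terms. The only analytic input beyond this is the decay hypothesis on $\Psi$ (to justify discarding the sphere-at-infinity boundary term) and the symmetry of $\Delta_S$ on $L^{2}(S^{2})$ (which holds for smooth $\beta$).
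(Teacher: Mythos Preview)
Your proposal is correct and follows essentially the same approach as the paper: for part~(i) the paper multiplies the wave equation by $\overline{\D_t\Psi}$, takes the real part, integrates over $|x|\ge1$, and then rewrites the resulting $S^2$ boundary integral via the kinematic and dynamic conditions as a total time derivative---precisely your computation run in the opposite order. For part~(ii) the paper likewise expands $\beta$ in spherical harmonics, uses the center-of-mass constraint to drop the $l=1$ terms, and combines the $l=0$ contribution $-2|\langle\beta,Y_0^0\rangle|^2$ with the $3\gamma(\tfrac{\Ca}{2}+\tfrac{2}{\We})$ term, matching your argument exactly.
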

 
\begin{proof}
%\smallskip
{\noindent \bf Part (i):}\ Multiplying  the wave equation~\eqref{eq:lin3wave} by 
$\overline{\D_t\Psi}$ and taking the real part of the resulting equation yields
\begin{equation}
\D_t e(\Psi,\D_t\Psi)\ +\ \div\left(\ -\overline{\D_t\Psi}\grad\Psi\ -\ \D_t\Psi\overline{\grad\Psi}\right)\ =\ 0,
\label{local-cons}
\end{equation} 
where
$e(\Psi,\D_t\Psi) = \epsilon^{2} \abs{\D_t\Psi}^{2} + \abs{\grad\Psi}^{2}.
$
Integration of the local conservation law~\eqref{local-cons} over the region $|x|\ge1$ yields
\begin{equation}
\frac{d}{dt}\ \int_{|x|\ge1} e(\Psi,\D_t\Psi)\ dx + \int_{|x|=1} 
\left(\ \overline{\D_t\Psi}\grad\Psi\cdot\unitn + \ 
 \D_t\Psi\overline{\grad\Psi}\cdot\unitn \right)\ dS = 0,
\label{cons1}
\end{equation}
where $\unitn$ denotes the unit normal to $S^2$, pointing into the ball. 
The theorem no follows from re-expressing the second term in~\eqref{cons1} using the boundary conditions:
\begin{multline}
\int_{|x|=1} 
\left(\, \overline{\D_t\Psi}\grad\Psi\cdot\unitn +\D_t\Psi\overline{\grad\Psi}\cdot\unitn\, \right)\, dS\\
= \frac{d}{dt}\left[ 3\gamma\left( \tfrac{\Ca}{2} + \tfrac{2}{\We} \right) \bigabs{\langle\beta,Y_0^0\rangle}^2 + \tfrac{1}{\We}\, \langle (-\Delta_S-2)\beta,\beta\rangle \right]
\end{multline}

\noindent{\bf Part (ii):} We show now, using the 
%smallness of surface tension and the 
center of mass constraint,
\eqref{com-constraint}, that the energy is positive definite on solutions of the linearized system,~\eqref{eq:n3linear}.

By~\eqref{com-constraint} we can expand $\beta$ in spherical harmonics as
$\beta \,=\, \langle \beta,Y_0^0\rangle\, Y_0^0\ +\ \beta_{l\ge2}$, where
$\beta_{l\ge2}\ =\ \sum_{l\ge2}\sum_{|m|\le l}\, \langle \beta,Y_l^m\rangle\ Y_l^m.
$
Consider the expression $\langle (-\Delta_S-2)\beta,\beta\rangle$. We have, by orthogonality
\begin{align}
\langle (-\Delta_S-2)\beta,\beta\rangle &= -2\left| \bigave{\beta,Y_0^0} \right|^2+ \left\langle (-\Delta_S-2)\beta_{l\ge2},\beta_{l\ge2}\right\rangle\nonumber\\
&= -2\left| \bigave{\beta,Y_0^0}\right|^2+  \sum_{l\ge2}\sum_{|m|\le l} (l+2)(l-1) \left| \langle\beta,Y_l^m\rangle\right|^2
\label{DeltaSm2}
\end{align}
Use of~\eqref{DeltaSm2} in the expression for the energy~\eqref{CofE} yields~\eqref{CofE-pos1}, 
which is clearly positive definite.
\qquad\end{proof}

%%% end section 2 energy
%\input{scat-res} %3
%%
%%%%%%%%%%%%%%%%%
\section{The Scattering Resonance Problem}
Our results on time-decay of the velocity potential and bubble surface perturbations 
(Theorems~\ref{theorem:NtD} and~\ref{thm:linsol}) are proved using their representation as 
 inverse Laplace transforms of a resolvent operator applied to the initial data; see Equations~\eqref{eq:ulm} and~\eqref{eq:betauhp}, 
  in which the contour of integration is a horizontal line in the upper half of the complex frequency plane. Time decay 
   (and a resonance mode expansion) is deduced by analytically continuing the resolvent kernel (Green's function)  and  deforming the integration contour across the real axis (essential spectrum) into the lower half plane. The time-decay is determined  by poles of this analytically continued resolvent kernel. These poles are called {\it scattering resonances} or  {\it scattering poles}. An alternative characterization of scattering resonances is as  complex frequencies in the lower half plane for which there are non-trivial time-harmonic solutions of the linearized equations,  satisfying an {\it outgoing radiation} (non-self-adjoint) boundary condition at infinity.

 In this section we use the alternative (spectral) characterization as solutions of a non-selfadjoint eigenvalue problem.
We consider two classes of scattering resonances:
\begin{remunerate}
%\begin{enumerate}
\item {\it Rigid resonances}, associated with the wave equation
on the exterior of the (rigid) unit sphere with Neumann boundary conditions, and 
\item {\it Deformation resonances}, associated with the linearized
 system~\eqref{eq:n3linear}.
%\end{enumerate} 
\end{remunerate}

Both families of resonances, for $\epsilon>0$, lie in the lower half plane. Of particular interest is the resonance in each family with the smallest (in magnitude) imaginary part. Denote by $\omega_\star(\epsilon)$ and  $\lambda_\star(\epsilon)$\footnote{Here, and in later sections, $\lambda^{\pm}_\star(\epsilon)$ may be denoted simply $\lambda_\star(\epsilon)$.} the rigid and deformation resonances of smallest imaginary parts. We find for $\epsilon$ small:
\begin{equation}
\left| \Im\omega_\star(\epsilon) \right| = 
{\mathcal O}\left(\epsilon^{-1}\right),\
\left| \Im\lambda_\star(\epsilon) \right| = 
{\cal O}\bigl(\epsilon^{-3}e^{-\frac{\kappa}{\epsilon^{2}}}\bigr),\ \kappa>0\,\,\,
\Rightarrow\,\,\left| \Im\lambda_\star(\epsilon) \right|  \ll \left| \Im\omega_\star(\epsilon) \right|.
\end{equation}
\subsection{Rigid\ -\ Neumann scattering resonances}
We consider the scattering resonance problem, associated with  the wave equation $\epsilon^2\D_t^2\Psi=\Delta\Psi$ in the exterior of the unit sphere with Neumann boundary conditions. Seeking time-harmonic solutions, $\Psi(r,\Omega,t)=e^{-i\omega t}\Psi_\omega$, which are outgoing as $r\to\infty$ , we arrive  the following eigenvalue problem
 \begin{align}
 \left(\ \Delta\ +\ (\epsilon\omega)^2\ \right)\Psi_\omega(r,\Omega)\ &=0,\ \ \ \  r>1;\ \ \ 
 \D_r\Psi_\omega(r,\Omega)\ =0,\ \ \ \ r=1.
 \label{srp-rigid}
 \end{align}
Outgoing solutions are spanned by functions of the form
 $
 h_l^{(1)}(\epsilon\omega r)\ Y_l^m(\Omega).
 $
 Thus, $\omega$ is a Neumann scattering resonance if and only if
$\D h_l(\epsilon\omega)\ =\ 0$\ . The following result summarizes results in~\cite{Tokita:1972lr,Olver:1954lr}; see also~\cite{Abramowitz:1965zr}.

\begin{theorem}\label{thm:rigid-summary}
Fix an arbitrary $\epsilon>0$ and arbitrary.
\begin{romannum}
%\begin{enumerate}
	\item For each $l\ge0$, the equation $\D h_{l}^{(1)}(\epsilon\omega)=0$ has a family of solutions
$
		\{\omega_{l,k}(\epsilon)=
		{\epsilon}^{-1}\omega_{l,k}:\ \ k=1,\dots,l+1\}.$
	\item The set of all Neumann scattering resonances is a discrete subset of the lower half complex plane and is uniformly bounded away from the real axis. In particular,  there exists $l_*\ge0, 1\le k_*\le l_*+1$ such that $\omega_*(\epsilon)=_{\rm def} \omega_{l_*,k_*}(\epsilon)$ is a resonance whose imaginary part is of minimal magnitude, {\it i.e.} 
		\begin{equation}
		\Im \omega_{l,k}(\epsilon)\le \Im\omega_*(\epsilon) < 0,\ \ \ \text{for all}\ l\ge0,\ \ k=1,\dots,l+1
		\label{im-omega-lk-lt0}
		\end{equation}
	\item There exist constants $C_1, C_2>0$, such that for all 
	$l\ge 0$
		\begin{align*}
		C_1\ \epsilon^{-1}\ l^{1/3} \le \bigabs{\Im \omega_{l,k}(\epsilon)},\  \ l\gg 1\qquad \textrm{and}\qquad    
		\left| \omega_{l,k}(\epsilon) \right|\ =\ \epsilon^{-1} |\omega_{l,k}|\ \le\ C_2 \epsilon^{-1}\ l\ .
		\end{align*}
%\end{enumerate}
\end{romannum}
\end{theorem}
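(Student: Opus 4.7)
The plan is to reduce the resonance condition $\D h_l^{(1)}(\epsilon\omega)=0$ to a polynomial equation, to rule out resonances in the closed upper half plane via an outgoing-state energy identity, and to invoke Olver's uniform asymptotics for the quantitative bounds. First, using the closed form
\begin{equation*}
h_l^{(1)}(z) \;=\; (-i)^{l+1}\,\frac{e^{iz}}{z^{l+1}}\,R_l(z),
\end{equation*}
where $R_l$ is a polynomial of degree exactly $l$ with $R_l(0)\ne 0$ and unit leading coefficient, differentiation reduces $\D h_l^{(1)}(z)=0$ to
\begin{equation*}
iz\,R_l(z) + z\,R_l'(z) - (l+1)\,R_l(z) \;=\; 0,
\end{equation*}
which is a polynomial of degree $l+1$ in $z$ with leading coefficient $i$. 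A Wronskian check (using that $j_l,y_l$ cannot simultaneously have a common zero with their derivatives) shows the roots $\{z_{l,k}\}_{k=1}^{l+1}$ are simple, and then $\omega_{l,k}(\epsilon)=z_{l,k}/\epsilon$ gives part (i).

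For part (ii), suppose $\omega$ is a resonance with $\Im\omega\ge 0$ and associate the outgoing state $\Psi_\omega(r,\Omega)=h_l^{(1)}(\epsilon\omega r)Y_l^m(\Omega)$. Green's identity on the annulus $\{1<|x|<R\}$, together with the Neumann condition at $|x|=1$, gives
\begin{equation*}
\int_{1<|x|<R}\!\left(|\grad \Psi_\omega|^2 - \epsilon^2\omega^2|\Psi_\omega|^2\right)dx \;=\; \int_{|x|=R}\overline{\Psi_\omega}\,\D_r \Psi_\omega\,dS.
\end{equation*}
When $\Im\omega>0$ the Hankel asymptotics $h_l^{(1)}(z)\sim e^{iz}/z$ force exponential decay in $r$ so the $|x|=R$ term vanishes as $R\to\infty$; taking imaginary parts yields $\Re\omega\cdot\Im\omega=0$, and taking real parts rules out purely imaginary $\omega$. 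For $\omega$ real, Rellich's uniqueness theorem for outgoing solutions of the Helmholtz equation with Neumann data forces $\Psi_\omega\equiv 0$. Hence every resonance lies strictly in the lower half plane, proving the sign statement in (ii).

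For the uniform gap in (ii) and the quantitative bounds in (iii), I appeal to Olver's uniform asymptotic expansions of $h_l^{(1)}(z)$ via Airy functions in the transition regime $z \sim l$: these localize the zeros of $\D h_l^{(1)}$ near the images of the zeros of the derivative of the Airy function, placing them in a cone in the lower half plane at distance comparable to $l^{1/3}$ from the real axis and within distance $\lesssim l$ from the origin. Rescaling by $\omega=z/\epsilon$ delivers the two inequalities of (iii), and combining with the finite, lower-half-plane zero set at each fixed $l$ produces a single $\omega_\star(\epsilon)$ minimizing $|\Im\omega_{l,k}(\epsilon)|$. The main obstacle is precisely this step: the classical Debye expansion degenerates at the turning point $z\approx l$, so the sharp $l^{1/3}$ lower bound is not elementary and requires Olver's delicate uniform-in-$l$ asymptotics. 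I would not redo this analysis, but rather cite the relevant results of~\cite{Olver:1954lr} and~\cite{Tokita:1972lr} verbatim, as the statement already indicates.
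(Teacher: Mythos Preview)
Your proposal is correct and matches the paper's treatment: the paper does not prove this theorem in detail but simply attributes it to Tokita~\cite{Tokita:1972lr} and Olver~\cite{Olver:1954lr}, noting the polynomial representation of $h_l^{(1)}$ (which you use for part~(i)) and the Airy-function asymptotics for the zeros of $\D H_\nu^{(1)}$ (which you invoke for part~(iii)). Your Green's-identity/Rellich argument for part~(ii) is a standard variant of the Wronskian-plus-energy argument the paper spells out later for the deformation resonances (proof of Theorem~\ref{thm:def-res-general}), so there is no substantive divergence.
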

%%%
This result is essentially due to Tokita~\cite{Tokita:1972lr} and uses fundamental results on the asymptotics of Hankel and Airy functions; see  Olver~\cite{Olver:1954lr,Abramowitz:1965zr}. Explicit approximations to zeros of $\D H_{\nu}$ are given in Equation~(\ref {eq:dH-zero}).

%%%

\subsection{Deformation resonances}\label{section:deformation}
We seek time-harmonic solutions of the linearized perturbation equations~\eqref{eq:n3linear}\ :
$
\Psi = e^{-i\lambda t}\Psi_{\lambda}(r,\Omega),\ \beta = e^{-i\lambda t} \beta_{\lambda}(\Omega).$
% \label{Psilam-betalam}
%\end{equation}
%
Substitution into~\eqref{eq:n3linear} yields the following Helmholtz  eigenvalue problem:
\begin{subequations}\label{eq:srp}
\begin{align}
\left(\ \Delta +\left(\epsilon \lambda\right)^2\ \right) \Psi_{\lambda} &= 0, && r>1\\
\partial_{r}\Psi_{\lambda} &= -i\lambda \beta_{\lambda}, && r=1\\
-i\lambda \Psi_{\lambda} &= 3\gamma\left( \tfrac{\Ca}{2} + \tfrac{2}{\We} \right) \ 
\langle \beta_{\lambda},Y_0^0 \rangle Y_0^0 -
 \tfrac{1}{\We}\ (2+\Delta_{S})\beta_{\lambda}\ , && r=1\\
\Psi_{\lambda} &\phantom{=} \text{outgoing} && r\to\infty.
\end{align}
\end{subequations}
If $\lambda$ is such that~\eqref{eq:srp} has a non-trivial solution,
then we call $\lambda$ a (deformation) scattering resonance energy or scattering frequency, and $(\Psi_\lambda,\beta_\lambda)$ a corresponding scattering resonance mode.

Since outgoing solutions of the three-dimensional Helmholtz equation are linear combinations of solutions of the form $h_l^{(1)}(r) Y_l^m(\Omega)$, $\ |m|\le l$, where $h_l^{(1)}$ denotes the {\it outgoing} spherical Hankel function of order $l$,  we seek solutions of the boundary value problem~\eqref{eq:srp} of the form:
$
\Psi_{\lambda}(r,\Omega) = A\ Y_l^m(\Omega)\ h_{l}^{(1)}\left(\epsilon \lambda r\right)$, $\ \beta_{\lambda}(\Omega) = B\ Y_l^m(\Omega)$, $\ r\ge1$, $\ \Omega\in S^2$,
%\label{eq:outwave-1}
%\end{equation}
%
where $A$ and $B$ are constants to be determined.
This Ansatz  automatically solves the Helmholtz equation and satisfies
the outgoing radiation condition. To impose the boundary conditions at $r=1$ we 
substitute we substitute the expressions for $\Psi_\lambda$ and $\beta_\lambda$ into~\eqref{eq:srp} and obtain the following two linear homogeneous equations for the constants $A$ and $B$.
This system has a non-trivial solution if and only if $\lambda$ solves the equation:
\begin{equation}
\rlhat\ \epsilon\ \lambda \D h_{l}^{(1)}(\epsilon\lambda) + \lambda^{2}  h_{l}^{(1)}\left(\epsilon\lambda\right) = 0,
\label{srp-en-eqn}
\end{equation} 
where $\rlhat$ is given by~\eqref{eq:rlhat-notation}.
We shall be interested in the character of solutions to~\eqref{srp-en-eqn} for small, positive and fixed $\epsilon$. By~\eqref{eq:useful-limit}
\begin{equation}
\lim_{z\to0} \frac{z\D h_l^{(1)}(z)}{h_l^{(1)}(z)}\ =\ \lim_{z\to0}\  
 \frac{p_{l} ( z)}{r^{l+1}\left[l p_{l}(z) - p_{l+1}( z)\right]}\ =\ 
  -(l+1)\ .  \label{hankrat-lim}
\end{equation}
Thus,  we write~\eqref{srp-en-eqn} in the form:
\begin{equation}
\lambda^2 \ +\  \rlhat\ G_l(\epsilon\lambda)\ =\ 0,\ \ \  G_l(z)\ \equiv\ 
\frac{z\D h_l^{(1)}(z)}{h_l^{(1)}(z)} .
 \label{srp-en-eqn-A}
 \end{equation}
we call the solutions of~\eqref{srp-en-eqn-A} {\it deformation resonances}. Their properties are summarized in the following result, proved in Section~\ref{sec:resonances}.
\begin{theorem}  [\textrm{Deformation resonances}] \label{thm:deformation-summary}
 Fix $\epsilon>0$ and arbitrary. 
\begin{romannum}
%\begin{enumerate} 
\item There are $l+2$ solutions of Equation~\eqref{srp-en-eqn-A}  denoted $\{\lambda_{l,j}(\epsilon)\}$, for $l=0,2,3,\dotsc$ and $j=1,\dotsc,l+2$. These are the  deformation resonance energies.
\item The set of deformation resonance energies is a discrete subset of the lower half complex plane and is uniformly bounded away from the real axis. That is, for some $l_{\star}(\epsilon), k_{\star}(\epsilon)$, 
\begin{equation}
\Im\lambda_{l,j}(\epsilon) \le\ \Im\lambda_{{l_\star},{j_\star}}(\epsilon)\ \equiv\  \Im\lambda_\star(\epsilon) < 0,\ \ \ {\rm all}\ \  l\ge0,\ |j|\le l+2.
\end{equation}
\item $\lambda_{l,j}(\epsilon)= {\cal O}( l)$ as $l\to\infty$.
\item In the incompressible limit,  $\epsilon\to0^{+}$, the imaginary parts of all resonances tend to $-\infty$ \underline{except} for the family of 
\underline{Rayleigh resonances} with real frequencies:
\begin{align}
\lambda^\pm_{0}(0) &= \pm\sqrt{ \tfrac{3\gamma}{2}\Ca + \tfrac{2}{\We} (3\gamma-1)},\;\;
\lambda^\pm_{l}(0) = \pm\sqrt{\tfrac{1}{\We} (l+2)(l+1)(l-1)},\; \;l\ge2.  \label{leps0-res}
\end{align}
%\end{enumerate}
\end{romannum}
\end{theorem}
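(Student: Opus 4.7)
The plan is to recast the transcendental resonance condition~\eqref{srp-en-eqn} as a polynomial equation, and then combine energy conservation with classical Hankel function asymptotics. Using the representation $h_l^{(1)}(z) = e^{iz}P_l(z)/z^{l+1}$ and correspondingly $\D h_l^{(1)}(z) = e^{iz}Q_l(z)/z^{l+2}$, with $P_l$ and $Q_l$ polynomials of degrees $l$ and $l+1$, substitution into~\eqref{srp-en-eqn} and clearing the common factor $e^{i\epsilon\lambda}/(\epsilon\lambda)^{l+2}$ yields
\[
  \epsilon^2\lambda^2\, P_l(\epsilon\lambda)\ +\ \epsilon\,\rlhat\,Q_l(\epsilon\lambda)\ =\ 0,
\]
a polynomial of degree $l+2$ in $\lambda$. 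The limit~\eqref{hankrat-lim} rules out a spurious root at $\lambda=0$, so all $l+2$ roots are genuine deformation resonances, giving~(i).

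For~(ii), let $(\lambda,\Psi_\lambda,\beta_\lambda)$ be a nontrivial resonance and form the time-harmonic pair $\Psi(x,t) = e^{-i\lambda t}\Psi_\lambda(x)$, $\beta(\Omega,t) = e^{-i\lambda t}\beta_\lambda(\Omega)$. If $\Im\lambda>0$, then $h_l^{(1)}(\epsilon\lambda r)$ decays exponentially as $r\to\infty$, so the energy $\mathcal{E}$ of Proposition~\ref{thm:energy}(i) is finite. Invariance forces $\mathcal{E}(t) = e^{2\,\Im\lambda\, t}\,\mathcal{E}(0)$, and the positive-definiteness of $\mathcal{E}$ on linearized solutions (Proposition~\ref{thm:energy}(ii)) then forces $\Psi_\lambda\equiv 0$ and $\beta_\lambda\equiv 0$. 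If $\Im\lambda=0$, a direct computation of the outward energy flux across $\lvert x\rvert = R$ on the outgoing Hankel mode gives a strictly positive value, incompatible with a time-periodic resonance state, so $\Im\lambda<0$. Discreteness is immediate from~(i). The uniform separation $\Im\lambda_{l,j}(\epsilon)\le\Im\lambda_\star(\epsilon)<0$ then follows from parts~(iii)--(iv) below, since only finitely many resonances lie in any bounded region of $\C$ and the tails of the families escape to infinity without approaching the real axis, their rescaled positions approaching the zeros of $h_l^{(1)}$, which lie strictly in the lower half plane by the classical Macdonald--Watson theorem.

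For~(iii), divide~\eqref{srp-en-eqn} by $h_l^{(1)}(\epsilon\lambda)$ to put it in the form $\lambda^2 + \rlhat\,G_l(\epsilon\lambda) = 0$ and invoke Olver's uniform asymptotic expansions of $h_l^{(1)}$ and $\D h_l^{(1)}$, exactly as used by Tokita in establishing Theorem~\ref{thm:rigid-summary}. A brief case analysis over the three regimes $\lvert\epsilon\lambda\rvert\ll l$, $\lvert\epsilon\lambda\rvert\sim l$, and $\lvert\epsilon\lambda\rvert\gg l$, together with the coefficient bounds from the polynomial form in~(i) ($\deg P_l = l$ and $\rlhat = O(l^2)$), yields the bound $\lvert\lambda_{l,j}(\epsilon)\rvert = O(l)$ as $l\to\infty$.

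For~(iv), rescale $\mu = \epsilon\lambda$ and write the polynomial from~(i) as $\mu^2 P_l(\mu) + \epsilon\,\rlhat\,Q_l(\mu) = 0$. At $\epsilon=0$ this degenerates to $\mu^2 P_l(\mu) = 0$, with a double root at $\mu=0$ and $l$ simple roots at the zeros of $P_l$, all strictly in the lower half plane by Macdonald--Watson. For the $l$ simple-root branches, regular perturbation gives $\mu(\epsilon) = \mu(0) + O(\epsilon)$, and hence $\lambda = \mu/\epsilon$ tends to $\infty$ with $\lvert\Im\lambda\rvert\to\infty$ as $\epsilon\to 0^+$. The two roots emerging from $\mu=0$ are the Rayleigh branches: inserting $G_l(\epsilon\lambda) = -(l+1) + O((\epsilon\lambda)^2)$ into $\lambda^2 + \rlhat\,G_l(\epsilon\lambda)=0$ produces $\lambda^\pm(\epsilon) = \pm\sqrt{(l+1)\rlhat} + O(\epsilon^2)$, reproducing~\eqref{leps0-res} in the limit. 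The main technical obstacle is the perturbation analysis at the degenerate root $\mu=0$: a direct regular expansion is blocked by the coincidence of the two Rayleigh roots at $\epsilon=0$, and obtaining the correct sign and magnitude of $\Im\lambda^\pm(\epsilon)$ requires the refined Taylor expansion of $G_l$ at the origin recorded in Proposition~\ref{thm:imagterm}; it is precisely this refinement that produces the exponentially small imaginary parts of~\eqref{Im-part_l}.
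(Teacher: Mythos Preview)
Your approach is largely parallel to the paper's, but with a few notable differences and one genuine gap.

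For part~(i) and the exclusion of upper half-plane resonances in~(ii), you and the paper agree: polynomial reduction plus energy conservation. For real $\lambda$, you invoke positivity of the outward energy flux; the paper instead writes the resonance condition in terms of $J_\nu$ and $Y_\nu$ and uses the Wronskian identity $J_\nu'Y_\nu - Y_\nu'J_\nu = 2/(\pi x)\ne 0$ to force a contradiction. These are really the same computation in different clothing, since the radial energy flux through $|x|=R$ is precisely this Wronskian up to constants.

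For part~(iv), your rescaling $\mu=\epsilon\lambda$ and the resulting regular perturbation of $\mu^2 P_l(\mu)+\epsilon\,\rlhat\,Q_l(\mu)=0$ is cleaner than what the paper does explicitly: it makes transparent why exactly $l$ branches escape to $\Im\lambda\to-\infty$ (they track the lower half-plane zeros of $P_l$, i.e.\ of $h_l^{(1)}$) while two branches stay finite and real in the limit. The paper establishes the Rayleigh pair via direct Taylor expansion of $G_l$ but does not spell out the fate of the other $l$ roots as you do.

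The gap is in your argument for the \emph{uniform} bound $\Im\lambda_\star(\epsilon)<0$ in~(ii). You claim this follows from~(iii) and~(iv), but~(iv) concerns $\epsilon\to 0$ for fixed~$l$, whereas what is needed is control as $l\to\infty$ for \emph{fixed}~$\epsilon$. Your statement that ``rescaled positions approach the zeros of $h_l^{(1)}$'' is also incorrect in this regime: the paper's Theorem~\ref{thm:big-l} shows that for large~$l$, $l{+}1$ of the deformation resonances lie near the zeros of $\D H_\nu^{(1)}$ (the \emph{Neumann} rigid resonances), not of $h_l^{(1)}$, with the remaining one on the negative imaginary axis. The crucial quantitative input is $|\Im\lambda_{l,j}|\ge C\epsilon^{-1}l^{1/3}$ for large~$l$, which comes from Olver's uniform Airy asymptotics applied to $\D H_\nu^{(1)}$ via a Rouch\'e argument. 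Without this, you have not ruled out accumulation of resonances on the real axis as $l\to\infty$, and part~(iii) alone (an upper bound $|\lambda|=O(l)$) does not help.
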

%
%%%% end section 3 - scatres
%\input{NtoD} %4
%
%%%%%%%%%%%%%%%%%%%%%%%
\section{A theorem on resonance expansions and time-decay for the exterior Neumann problem for the wave equation} \label{sec:extneumann}
Our strategy for solving the initial-boundary value problem~\eqref{eq:n3linear},~\eqref{ib-data} is to
(1) construct the solution to the wave equation~\eqref{eq:lin3wave} with kinematic boundary condition~\eqref{eq:lin3kinematic}, which specifies Neumann boundary data on the unit sphere. This is the {\it Neumann to Dirichlet map}\  $\D_t\beta\mapsto\Psi={\rm NtD}\left[\D_t\beta\right]$. Then,  (2) substitute $\Psi=\D_t\beta\to{\rm NtD}\left[\D_t\beta\right]$ into the dynamic boundary condition~\eqref{eq:lin3bernoulli} and then study the closed nonlocal equation for $\beta$ on the unit sphere.

Denote by $U$ the exterior of the unit sphere in $\R^{3}$:
\begin{equation}
U\ =\ \{\bx:\ |\bx|>1\}.
\nn\end{equation}
 In particular, we consider the general initial-boundary value problem
 %\ednote{ASK: Need to indicate size of $c$. Do we require %$c<1$? If so where is this apparent?}
%
\begin{subequations} \label{eq:prob}
\begin{alignat}{2} 
\left( c^{-2} \D_t^2 - \Delta\right) u &= 0 , &\qquad& \text{in } U \times (0,\infty), \label{eq:uwave} \\
u &= 0,\ \  \dt u = 0, && \text{on } U \times \set{t=0}, \\
\dr u &= \dt f, && \text{on } \partial U\times (0,\infty) \label{eq:uBC}
\end{alignat}
\end{subequations}
where  the function $\dt f: \partial U \times (0,\infty) \to \R$  is a prescribed boundary forcing function, whose properties (anticipating those of the bubble shape perturbation, $\D_t\beta$) are prescribed below. 
%satisfies $\abs{\dt f(t)}\le M e^{bt}$ for all $t$.

We introduce a norm of functions, $\D_tf=\D_tf(\Omega,t)$, defined on the sphere, $S^2$,  which encodes smoothness in $\Omega\in S^2$ and decay in time, $t$.

\begin{definition}
\begin{romannum}
%\begin{enumerate}
\item For a function $J:[0,\infty)\to \R$, define for any $\kp\in\R$.
\begin{equation}
[J]_{\kp}\ =\ \sup_{t\ge0} e^{\kp t}|J(t)|.
\label{inf-eta-norm}
\end{equation}
\item For a function $g:\R_t\times S^2_\Omega\to\R$, with 
spherical harmonic expansion:
%\begin{equation}
$g(t,\Omega) = \sum_{l\ge0}\sum_{|m|\le l} g_l^m(t)Y_l^m(\Omega)$,
%\label{g-coeff}
%\end{equation}
define, for $q, \kp\in\R$, the norm
\begin{align}
[g]_{q,\kp} &=   \sum_{l\ge0}\sum_{|m|\le l} 
(1+l)^q\ \sup_{t\ge0}\  e^{\kp t}\left(\ |g^m_l(t)|+|\D_t g^m_l(t)|\ \right)\ \nonumber \\
&= \sum_{l\ge0}\sum_{|m|\le l} (1+l)^q\ \left(\ [g^m_l ]_{\kp}+[\D_t g^m_l ]_{\kp}\ \right).
\label{norm-q}
\end{align}
%\end{enumerate}
\end{romannum}
\end{definition}

\noindent The  main result of this section is the following theorem, a generalization~\cite{Tokita:1972lr,Wilcox:1959qy}.

\begin{theorem}\label{theorem:NtD}
Consider the Neumann initial-boundary value problem for the wave equation~\eqref{eq:prob} on the exterior of the unit sphere. 
\begin{romannum}
%\begin{enumerate}
\item  \emph{Time Decay Estimate:}\ There exists $\alpha>0$,  such that if $[\D_tf]_{\alpha,\eta}<\infty$, with $\eta>0$,  
 then there exists a unique solution of the initial-boundary value problem equations~(\ref {eq:prob}) satisfying the bound
\beq 
|u(x,t)| \le  
\begin{cases} C\ \frac{1}{|x|} e^{-\min \{\eta, |\Im\omega_*|\} (t-\frac{|x|-1}{c}) }\  [\D_tf]_{\alpha,\eta} , \qquad  &1<|x|<1+ct\\
0,\ \ &|x|\ge1+ct
\end{cases}
\label{eq:decay-est1}\eeq
Here, $\omega_*\ (\Im\omega_*<0)$ denotes the {\it scattering resonance} of the exterior wave equation with Neumann boundary condition, which is closest to the real axis; see Theorem~\ref{thm:rigid-summary}.
 \item \emph{Resonance Expansion:}
Let $\alpha$ be as above.  Suppose $f:S^2\times\R_+\to\C$ has the  spherical harmonic expansion in the sense that 
\begin{align}
 & \lim_{L\to\infty} [ \D_t f - \D_t f^{(L)} ]_{\alpha,\eta}\ =\ 0,\ \  {\rm where}\ \ 
 f^{(L)}(\Omega,t) \equiv \sum_{l=0}^{L} \sum_{|m|\le l} f_{l}^{m}(t) Y_{l}^{m}(\Omega).  \label{eq:fsum}\end{align}
Denote by $u^{(L)}(r,\Omega,t)$ the resonance expansion partial sum:
 \beq    \label{eq:uresexp}
u^{(L)} = i\sum_{l=0}^{L} \sum_{|m|\le l}  \left[ \sum_{k=1}^{l+1}   \int\limits_{0}^{t-(r-1)/c} \!\!\!\!\!e^{-i \omega_{l}^{k}(t-s)} \partial_{s} f_{l}^{m}(s)\, ds  \ \frac{ h_{l}^{(1)}\left(\omega_{l,k} r/c\right)}{(\omega_{l,k}/c^2)\ \D^2 h_{l}^{(1)}\left(\omega_{l,k}/c\right)}  \right] Y_{l}^{m}(\Omega) .
\eeq 
Here $h_{l}^{(1)}(z)$ is the outgoing spherical Hankel function of order $l$ and 
\begin{equation}
\{\omega_{l}^{k}\}\ {\rm are\ the\ solutions\ of\ } \D h_{l}^{(1)}(\omega/c)=0;\nonumber
\end{equation}
see Theorem~\ref{thm:rigid-summary}. 
The limit 
$
u(x,t)=\lim_{L\to\infty} u^{(L)}(x,t)
$ exists and converges to the unique of solution of the initial-boundary value problem~\eqref{eq:prob}. Furthermore, we have the following 
error estimate for the resonance expansion. For  $0\le |a|+|b| \le 2$:
\begin{multline}   \label{eq:uerror}
\left|\ \D_t^a \D_x^b \bigl(u-u^{(L)} \bigr)(x,t)\ \right|  \\
\hspace{2em}\le 
\begin{cases} 
C\, [\D_tf -\D_t f^{(L)}]_{\alpha,\eta}   \,
\frac{1}{|x|} \, e^{-\min\{\eta,|\Im\omega_*|\}(t-\frac{|x|-1}{c})}, \ &1<|x|<1+ct\\
0,\ &|x|\ge1+ct.
\end{cases}
\end{multline}
%
% \end{enumerate}
\end{romannum}
\end{theorem}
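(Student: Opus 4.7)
\textbf{Proof proposal for Theorem~\ref{theorem:NtD}.} The plan is to reduce the three-dimensional wave equation to an infinite family of one-dimensional problems indexed by spherical harmonics, solve each one-dimensional problem explicitly via Laplace transform using Hankel functions, and then recover the time-domain representation by contour deformation. Expand
\[
u(r,\Omega,t)=\sum_{l\ge 0}\sum_{|m|\le l} u_l^m(r,t)\,Y_l^m(\Omega),\qquad \D_t f(\Omega,t)=\sum_{l\ge 0}\sum_{|m|\le l}\D_t f_l^m(t)\,Y_l^m(\Omega),
\]
so that each $u_l^m$ satisfies a 1D wave equation on $r>1$ with Neumann datum $\D_r u_l^m(1,t)=\D_t f_l^m(t)$ and trivial initial conditions. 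Take the Laplace transform in time at $\Re\tau=\alpha_0>0$, setting $\omega=i\tau$; the outgoing solution in the upper half $\omega$-plane is
\[
\widehat{u_l^m}(r,\omega)=-\frac{ic}{\omega}\,\frac{h_l^{(1)}(\omega r/c)}{\D h_l^{(1)}(\omega/c)}\,\widetilde{\D_t f_l^m}(\omega),
\]
where $\widetilde{\D_t f_l^m}$ denotes the one-sided Fourier transform, well-defined and analytic for $\Im\omega>-\eta$ under the hypothesis $[\D_t f]_{\alpha,\eta}<\infty$.

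First I would invert the transform: $u_l^m(r,t)=\frac{1}{2\pi}\int_{\Im\omega=\alpha_0}e^{-i\omega t}\widehat{u_l^m}(r,\omega)\,d\omega$. The finite propagation property $u(x,t)=0$ for $|x|\ge 1+ct$ follows either from the standard energy identity or directly from analyticity: for $t<(r-1)/c$ one may close the contour in the upper half plane since $h_l^{(1)}(\omega r/c)/\D h_l^{(1)}(\omega/c)\sim e^{i\omega(r-1)/c}$ uniformly as $|\omega|\to\infty$ in that half plane. Next, I would deform the contour downward to $\Im\omega=-\mu$, where $\mu=\min\{\eta,|\Im\omega_*|\}-\delta$ for arbitrarily small $\delta>0$. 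By Theorem~\ref{thm:rigid-summary}, the poles crossed are precisely the rigid resonances $\{\omega_{l,k}/c:\ k=1,\dots,l+1\}$, all located above $-|\Im\omega_*|$; each is a simple zero of $\D h_l^{(1)}$, yielding
\[
\operatorname*{Res}_{\omega=\omega_{l,k}}\frac{h_l^{(1)}(\omega r/c)}{(\omega/c)\,\D h_l^{(1)}(\omega/c)}=\frac{h_l^{(1)}(\omega_{l,k} r/c)}{(\omega_{l,k}/c^2)\,\D^2 h_l^{(1)}(\omega_{l,k}/c)},
\]
which, when multiplied by the Fourier integral of $\D_t f_l^m$ truncated at $t-(r-1)/c$ (to respect causality), is precisely the $(l,m,k)$ summand of \eqref{eq:uresexp}. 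The shifted contour integral produces the remainder
\[
R_l^m(r,t)=\frac{1}{2\pi}\int_{\Im\omega=-\mu}e^{-i\omega t}\widehat{u_l^m}(r,\omega)\,d\omega,
\]
which, by the exponential weight $e^{-i\omega t}$ and the bound $|\widehat{u_l^m}(r,\omega)|\le C\,r^{-1}e^{\mu(r-1)/c}\,\|\D_t f_l^m\|$, decays like $e^{-\mu(t-(r-1)/c)}/r$, giving \eqref{eq:decay-est1} and \eqref{eq:uerror} for each mode.

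Summation over $(l,m)$ is where the weight $(1+l)^q$ in the norm $[\,\cdot\,]_{q,\kp}$ enters. I would need uniform-in-$l$ estimates of the ratio $h_l^{(1)}(\omega r/c)/\D h_l^{(1)}(\omega/c)$ and of the residue coefficients $h_l^{(1)}(\omega_{l,k} r/c)/[(\omega_{l,k}/c^2)\D^2 h_l^{(1)}(\omega_{l,k}/c)]$, with polynomial growth in $l$, obtained from the Olver asymptotics recorded in Theorem~\ref{thm:rigid-summary} together with the lower bound $|\Im\omega_{l,k}|\gtrsim l^{1/3}/\epsilon$ (applied here with $c$ in place of $\epsilon^{-1}$). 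Combined with $\D_x^b$ derivatives bringing down at most powers of $\omega$, these estimates show that for some large but finite $\alpha=\alpha(|a|+|b|)$, the series \eqref{eq:uresexp} and the corresponding remainder series converge absolutely in $C^{2}$ on $\{1<|x|<1+ct\}$. Uniqueness of $u$ follows from the energy identity for the exterior Neumann problem.

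The main obstacle I anticipate is the uniform-in-$l$ bookkeeping at step three: one must keep the constants in the Hankel ratio bounds polynomial in $l$ on the shifted contour $\Im\omega=-\mu$ \emph{and} on the residues — the geometry of the contour relative to the curve of resonances $\omega_{l,k}$ matters near the transition zone where $|\omega|\sim l$, and naive bounds blow up. This is handled by splitting the spherical harmonic sum at a cutoff $l\sim l(\omega)$ and using the uniform Olver asymptotics for $h_l^{(1)}$ in their transitional regime; this is the step that fixes the admissible power $\alpha$ and that generalizes the older work of Wilcox and Tokita to the present time-dependent, exponentially-weighted setting.
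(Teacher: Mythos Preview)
Your overall architecture (spherical harmonic reduction, Laplace/Fourier in time, outgoing Hankel solution, contour deformation) matches the paper, but the contour step as you describe it cannot work. You propose to shift to the line $\Im\omega=-\mu$ with $\mu=\min\{\eta,|\Im\omega_*|\}-\delta$ and say that ``the poles crossed are precisely the rigid resonances $\{\omega_{l,k}\}$, all located above $-|\Im\omega_*|$.'' This is backwards: by definition $\omega_*$ is the resonance \emph{closest} to the real axis, so every $\omega_{l,k}$ satisfies $\Im\omega_{l,k}\le\Im\omega_*<0$, i.e.\ all resonances lie \emph{at or below} the line $\Im\omega=-|\Im\omega_*|$. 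Deforming to $\Im\omega=-\mu$ with $\mu<|\Im\omega_*|$ therefore crosses \emph{no} poles, and the residue sum in \eqref{eq:uresexp} never appears. Conversely, if you try to push below all $l+1$ resonances for a given $l$, Theorem~\ref{thm:rigid-summary} forces the contour down to depth $\sim l^{1/3}$, so no single horizontal line serves all $l$, and the remainder on such an $l$-dependent line would not produce the uniform rate $\min\{\eta,|\Im\omega_*|\}$.

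What the paper does instead is avoid any remainder integral altogether. The key device you are missing is to split the data transform at the retarded time,
\[
\widetilde{\D_t f_l^m}(-i\omega)=\underbrace{\int_0^{t-(r-1)/c} e^{i\omega s}\,\D_s f_l^m(s)\,ds}_{\mathcal F^{(1)}}\;+\;\underbrace{\int_{t-(r-1)/c}^{\infty} e^{i\omega s}\,\D_s f_l^m(s)\,ds}_{\mathcal F^{(2)}},
\]
and treat the two pieces separately. The $\mathcal F^{(2)}$ contribution is shown to vanish by pushing the contour \emph{upward} (it is analytic and decays there). The $\mathcal F^{(1)}$ piece is an entire function of $\omega$ (finite integral), so the contour for that piece can be pushed to $\Im\omega=-\gamma$ with $\gamma\to\infty$; an integration by parts in $s$ supplies the extra $|\omega|^{-1}$ needed for absolute convergence, and the integral along $\Gamma_3$ tends to $0$. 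Thus each $u_l^m$ equals \emph{exactly} the finite residue sum in \eqref{eq:uresexp}, with the truncated upper limit $t-(r-1)/c$ arising naturally from the splitting rather than being inserted by hand. The decay rate $\min\{\eta,|\Im\omega_*|\}$ then comes not from a remainder contour but from estimating $\int_0^{t-(r-1)/c} e^{\Im\omega_{l,k}(t-s)}e^{-\eta s}\,ds$ directly (Proposition~\ref{int-bound-lk}). Your discussion of the $l$-summation via polynomial-in-$l$ bounds on the residue coefficients (Olver asymptotics, Proposition~\ref{proposition:tokitish}) is on target and is indeed how the paper closes the argument.
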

%%%%%%%%%%%%%%%
%%%%%%%%%%%%%%%
%%%%%%%%%%%%%%%
%%%%%%%%%%%%%%%%%%%%%%%
%%%%%%%%%%%%%%%%%%%%%%%
\subsection{Proof of Theorem~\ref{theorem:NtD} }
We first note that the decay estimate of Part 1 is a consequence of the  resonance expansion of Part 2. Indeed, suppose Part 2 holds. Then, to prove the decay estimate~\eqref{eq:decay-est1},   note that $|u(x,t)| \le |u^{(L)}(x,t)| + |u(x,t)-u^{(L)}(x,t)|$. 
The first term is a finite sum, each of whose terms satisfies the decay estimate, while the second is controlled by~\eqref{eq:uerror}.

We now turn to the proof of the resonance expansion, Part 2 of Theorem~\ref{theorem:NtD}.  We first 
derive and, by Laplace transform methods,  solve the equations for  the spherical harmonic coefficients $u_{l}^{m}(r,t)$; see 
Equation~(\ref {eq:ulmrt}).  Then, we  form the partial sum for $u^{(L)}(r,\Omega,t)$ 
and prove  convergence.
%%%%
%%%%
%%%%%%%%%%%%%%%%%%%%%%%%
In proving Theorem~\ref{theorem:NtD} we use the following two   estimates:
%%%%%%%%%%%
\begin{proposition}\label{proposition:hankrat-bound}
Fix $r>1$ and $l\ge0$. Then we have the bound
\begin{align}
Q_l(\omega,r)\ &\equiv\ \frac{h_l^{(1)}(\omega r/c)}{(\omega/c)  \ \D h_{l}^{(1)}(\omega/c)} e^{-i\omega(r-1)/c}\label{eq:hankrat}\\
&=\ \frac{1}{\omega r/c}
\left(\ 1+ {\cal O}_{l,r}\left(\frac{1}{|\omega|}\right)\ \right) \quad \text{as } \abs{\omega}\to\infty.
\label{Q-asymp}
\end{align}
\end{proposition}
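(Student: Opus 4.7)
The plan is to invoke the closed-form representation of the outgoing spherical Hankel function due to Rayleigh,
\begin{equation*}
h_l^{(1)}(z)\ =\ (-i)^{l+1}\,\frac{e^{iz}}{z}\,P_l(1/z),\qquad P_l(w)\ =\ \sum_{k=0}^{l}\frac{i^k(l+k)!}{k!\,(l-k)!\,2^k}\,w^k,
\end{equation*}
which exhibits $h_l^{(1)}(z)$ as $e^{iz}/z$ times an explicit polynomial of degree $l$ in $1/z$ satisfying $P_l(0)=1$. Differentiating this identity, or applying the recurrence $\D h_l^{(1)}(z)=h_{l-1}^{(1)}(z)-((l+1)/z)h_l^{(1)}(z)$, yields a corresponding closed form for $\D h_l^{(1)}(z)$ with leading large-$|z|$ behavior $(-i)^l\,e^{iz}/z\,[1+\mathcal{O}_l(1/z)]$, and again with purely polynomial correction in $1/z$.

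First I would substitute these representations (with $z=\omega r/c$ in the numerator and $z=\omega/c$ in the denominator) into the defining expression~\eqref{eq:hankrat} for $Q_l(\omega,r)$ and simplify. The exponential ratio $e^{i\omega r/c}/e^{i\omega/c}=e^{i\omega(r-1)/c}$ exactly cancels the explicit phase factor $e^{-i\omega(r-1)/c}$ in the definition of $Q_l$; the algebraic prefactors collapse to a constant multiple of $(\omega r/c)^{-1}$; and what remains is the ratio of two polynomials in $1/\omega$, each with constant term $1$. This immediately recovers the leading-order term in~\eqref{Q-asymp}. To extract the $\mathcal{O}_{l,r}(1/|\omega|)$ remainder, I would expand the reciprocal of the denominator polynomial as a Neumann series $1/(1+\eta(\omega))=1-\eta(\omega)+\cdots$, valid for $\eta(\omega)=\mathcal{O}_l(1/\omega)$ once $|\omega|$ exceeds a threshold $C_{l,r}$; multiplying by the numerator polynomial $P_l(c/(\omega r))$ and collecting terms gives the claimed bound, uniform in $\arg\omega$ because all oscillatory information was absorbed by the cancelled exponentials.

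The main point of care is choosing the threshold $C_{l,r}$ so that $\D h_l^{(1)}(\omega/c)$ is bounded away from zero on the set of $\omega$ of interest. The zeros of $\D h_l^{(1)}$ are exactly the (finitely many, for fixed $l$) rigid Neumann resonances identified in Theorem~\ref{thm:rigid-summary}, so taking $C_{l,r}$ larger than the modulus of the largest such zero suffices: the Neumann series then converges throughout $\{|\omega|\ge C_{l,r}\}$, which is precisely the regime needed when deforming the inverse Laplace contour in the proof of Theorem~\ref{theorem:NtD}. Dependence of the error constant on $l$ and $r$ can be tracked explicitly through the coefficients of $P_l$ and the contribution of $P_l(c/(\omega r))$, so no additional analytic obstacle arises beyond careful bookkeeping.
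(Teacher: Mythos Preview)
Your proposal is correct and takes essentially the same approach as the paper: both exploit the polynomial representation of the spherical Hankel function (the paper writes $h_l^{(1)}(z)=z^{-l-1}p_l(z)e^{iz}$ with $p_l$ a degree-$l$ polynomial in $z$, whereas you use the equivalent form with $P_l(1/z)$), observe that the exponentials cancel, and read off the large-$|\omega|$ asymptotics from the resulting rational function. The paper's version is slightly more compressed, using the recursion $\D h_l^{(1)}(z)=z^{-1}l\,h_l^{(1)}(z)-h_{l+1}^{(1)}(z)$ to write $Q_l(\omega,r)=p_l(\omega r/c)\big/\bigl(r^{l+1}[l\,p_l(\omega/c)-p_{l+1}(\omega/c)]\bigr)$ and then invoking $p_l(z)\sim a_l^l z^l$ with $|a_l^l/a_{l+1}^{l+1}|=1$, but this is the same computation you outline.
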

%%%%%%%%%%%%%%%%%%%%%%%
%%
\begin{proof}
We use facts about $h_l(z)$, documented in Appendix~\ref{app:polyhank}. Note that  
 $h_l^{(1)}(z) = z^{-l-1} p_l(z) e^{iz}$, 
where $p_l(z)$ is a polynomial of degree $l$. Furthermore, we have the following recursion relating $h_l$ and $\D h_l$ derivatives 
\beq
\D h_l^{(1)}(z) = z^{-1} l h_l^{(1)}(z) - h_{l+1}^{(1)}(z); 
\eeq
 see  Appendix~\ref {app:derhank}. Therefore,
 \begin{equation}
 Q_l(\omega,r)\ =\  \frac{p_{l} (\omega r/c)}{r^{l+1}\left[l p_{l}(\omega/c) - p_{l+1}(\omega/c)\right]}\ .
 \nonumber\end{equation}
 Now fix $r>1$ and $l\ge0$. The asymptotics~\eqref{Q-asymp}  follows using that $p_l(z)\sim a_l^l z^l$ for large $|z|$ and that
$|a_l^l/a_{l+1}^{l+1}|=1$.
\qquad\end{proof}
%%%%%%%%%%%%%%%%%%%%%%%

 \begin{proposition}\label{int-bound-lk} 
 Let $0<\ \eta\ <|\Im\omega_*|\ =_{\rm def} \ \min_{l,k} |\Im\omega_{l,k}|$. Then, 
 \begin{equation}
 \int_0^{t-\frac{r-1}{c}} e^{\Im \omega_{l,k} (t-s)} e^{-\eta s}\, ds\ 
e^{-\Im \omega_{l,k}\frac{r-1}{c}}\ \le\ \frac{C}{|\Im\omega_*|-\eta}\ 
e^{-\eta(t-\frac{r-1}{c})},\ \ \ r<1+ct.
\nonumber\end{equation}
 \end{proposition}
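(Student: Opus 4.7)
The plan is a short direct computation; Proposition \ref{int-bound-lk} is essentially elementary once one notices the precise exponential cancellation. Set $\gamma_{l,k} = -\Im\omega_{l,k}>0$ and $\gamma_\star = |\Im\omega_\star| = \min_{l,k}\gamma_{l,k}>0$, so the hypothesis reads $0<\eta<\gamma_\star\le\gamma_{l,k}$. The integrand factorizes as $e^{-\gamma_{l,k}(t-s)}e^{-\eta s}$, an elementary exponential, and I would simply evaluate
\[
\int_0^{t-(r-1)/c} e^{-\gamma_{l,k}(t-s)} e^{-\eta s}\,ds \;=\; \frac{e^{-\gamma_{l,k}t}}{\gamma_{l,k}-\eta}\Bigl(e^{(\gamma_{l,k}-\eta)(t-(r-1)/c)}-1\Bigr),
\]
which is legitimate because $\gamma_{l,k}-\eta\ge\gamma_\star-\eta>0$.

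Next I would multiply by $e^{-\Im\omega_{l,k}(r-1)/c} = e^{\gamma_{l,k}(r-1)/c}$ and simplify the exponent of the first resulting term:
\[
-\gamma_{l,k}t + \gamma_{l,k}(r-1)/c + (\gamma_{l,k}-\eta)\bigl(t-(r-1)/c\bigr) \;=\; -\eta\bigl(t-(r-1)/c\bigr).
\]
This is the key cancellation: the dependence on $\gamma_{l,k}$ drops out of this term. The whole expression becomes
\[
\frac{1}{\gamma_{l,k}-\eta}\Bigl(e^{-\eta(t-(r-1)/c)}-e^{-\gamma_{l,k}(t-(r-1)/c)}\Bigr).
\]

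Finally, since $r<1+ct$ implies $t-(r-1)/c>0$ and $\gamma_{l,k}>\eta$, the second exponential is strictly smaller than the first, so the difference is bounded above by $e^{-\eta(t-(r-1)/c)}$. Using $\gamma_{l,k}-\eta\ge\gamma_\star-\eta$, I conclude
\[
\int_0^{t-(r-1)/c} e^{\Im\omega_{l,k}(t-s)}e^{-\eta s}\,ds\;e^{-\Im\omega_{l,k}(r-1)/c}\;\le\;\frac{1}{|\Im\omega_\star|-\eta}\,e^{-\eta(t-(r-1)/c)},
\]
which is the claim with $C=1$. There is no real obstacle here; the only thing to check is that the hypothesis $\eta<|\Im\omega_\star|$ (uniform in $l,k$ by Theorem \ref{thm:rigid-summary}(ii)) makes the integral finite and yields a constant independent of $(l,k)$.
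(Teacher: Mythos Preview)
Your proof is correct. The paper actually states Proposition~\ref{int-bound-lk} without proof, treating it as an elementary estimate to be used in the proof of Proposition~\ref{proposition:int-est}; your direct computation (evaluating the exponential integral, observing the cancellation of $\gamma_{l,k}$ in the leading exponent, and then bounding $\gamma_{l,k}-\eta\ge|\Im\omega_\star|-\eta$) is exactly the intended elementary verification, and even shows one may take $C=1$.
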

%%
%%%%%%%%%%%%%%%%
%%%%%%%%%%%%%%%%%%%%%%%
%%%%%%%%%%%%%%%%%%%%%%%
%%%%%%%%%%%%%%%%

We now embark on the proof of Theorem~\ref{theorem:NtD}.
 Substitution of the expansion 
\begin{align}
u(r,\Omega,t) &= \sum_{l=0}^{\infty} \sum_{|m|\le l} u_{l}^{m}(r,t) Y_{l}^{m}(\Omega) \label{eq:usum}
\end{align} 
into  equation~(\ref {eq:uwave}), expressing the Laplacian as $ \Delta_r+r^{-2}\Delta_S$ (radial and spherical parts) and using that $-\Delta_{S} Y_{l}^{m}= l(l+1) Y_{l}^{m}$, we obtain the following equations for $u_{l}^{m}(r,t)$:
\begin{equation}
\left(\ c^{-2} \D_t^2  - \Delta_{r} + \frac{l(l+1)}{r^{2}}\ \right)\  u_{l}^{m}(r,t)  \ =\ 0.\label{eq:urtpde}
\end{equation}
From the boundary conditions equation~(\ref {eq:uBC}) we have
\begin{equation}
\D_r {u_{l}^{m}} (r,t) = {\D_t f_{l}^{m}}(t), \quad \text{at } r=1 \label{eq:urtBC}
\end{equation}
%%%%%%%%%%%%%%%%
In terms of the time Laplace transform of $u$, $
\widetilde{u_{l}^{m}} (r,p) = \int_{0}^{\infty} e^{-pt} u_{l}^{m}(r,t) \, dt.
$, the initial-boundary value problem~\eqref{eq:urtpde},~\eqref{eq:urtBC}, with vanishing initial conditions for $r>1$ becomes
\begin{align}
&\left[\frac{1}{r^{2}}\dr \left(r^{2} \dr\right) + \left(\frac{ip}{c}\right)^{2}-\frac{l(l+1)}{r^{2}} \right] \widetilde{u_{l}^{m}} (r,p) = 0
\label{sbess}\\
&\D_r \widetilde{u_{l}^{m}} (r,p) = \widetilde{\dt f_{l}^{m}}(p), \quad \text{at } r=1,\qquad\ \ \
 \widetilde{u_{l}^{m}}(r,p)\ \to\ 0\ \ {\rm as}\ \ r\to\infty.\label{ulm-bc}
\end{align}
Equation~\eqref{sbess} is solved in terms of the {\it outgoing} spherical hankel function $h_l^{(1)}$; see Appendix~\ref{app:hankel}:
\begin{equation}
\widetilde{u_{l}^{m}} (r,p) = A_{l}^{m} (p) h_{l}^{(1)}(ipr/c), \label{eq:utilde}
\end{equation}
where $A_l^m(p)$ is to be determined. Imposing the boundary condition~\eqref{ulm-bc} yields
$
A_{l}^{m}(p) = \frac{\widetilde{\dt f_{l}^{m}}(p)}{(ip/c)\ \D h_{l}^{(1)}(ip/c)}.
$
Inversion of the Laplace transform yields:
\begin{equation}
u_{l}^{m} (r,t) = \frac{1}{2\pi i} \int_{\mu-i\infty}^{\mu+i\infty} e^{pt} \frac{\widetilde{\dt f_{l}^{m}}(p) h_{l}^{(1)}(ipr/c)}{(ip/c)\ \D h_{l}^{(1)}(ip/c)} \,dp\ ,
\end{equation}
where the contour is chosen in the right half plane, with $\Re p=\mu>0$ sufficiently large so that all poles lie to left.
\medskip

We find it convenient to work with a rotated (horizontal) contour and 
therefore make  the change of variable
\begin{equation} \omega = i p\ .
\label{omegaip}
\end{equation}
Then, 
\begin{equation}\label{eq:ulm}
\begin{split}
u_{l}^{m} (r,t) &= \frac{1}{2\pi} \int_{-\infty+i\mu}^{\infty+i \mu} e^{-i\omega t}\ \frac{\widetilde{\dt f_{l}^{m}}(-i\omega) h_{l}^{(1)}(\omega r/c)}{(\omega/c)\ \D h_{l}^{(1)}(\omega/c)} \,d\omega \\ 
&=\ \frac{1}{2\pi} \int_{-\infty+i\mu}^{\infty+i \mu} e^{-i\omega\left(t-(r-1)/c\right)} Q_{l}(\omega,r)\ d\omega,\quad \mu>0,
\end{split}
\end{equation}
where $Q_l(\omega,r)$ is defined in~\eqref{eq:hankrat}. 
\medskip

We first consider, $r=|x|$ fixed with $r>1+ct$ and prove the finite propagation speed result:
\begin{equation}
u_{l}^{m} (r,t)\ =\ 0,\ \ \ r>1+ct.
\label{ulm0-rlarge}
\end{equation}

\emph{Proof of Equation~\eqref{ulm0-rlarge}}.  By Theorem~\ref{thm:rigid-summary} the poles of $Q_{l}(\omega,r)$ 
are in the lower half $\omega-$ plane. Therefore, for any $\rho>0$
\begin{equation}
\frac{1}{2\pi}\left(\ \int_{-\rho+i\mu}^{\rho+i\mu}\ 
\ +\  
 \int_{{\cal C}_\rho}\ \right)\  e^{-i\omega\left(t-(r-1)/c\right)}\ Q_{l}(\omega,r) d\omega\ =\ 0, \ \ \mu>0, 
 \label{intC0}
 \end{equation}
 where $\mathcal{C}_\rho=\set{\ \omega : \abs{\omega-i\mu}=\rho,\,\Im \omega\ >\mu\ }$.
 It follows from~\eqref{intC0} that for $r>1+ct$
\begin{equation} \label{rho-limit}
\begin{split}
u_{l}^{m} (r,t) &=\ \lim_{\rho\to\infty} \frac{1}{2\pi} \int_{-\rho+i\mu}^{\rho+i\mu}  e^{i\omega\left((r-1)/c\ -\ t\right)} Q_{l}(\omega,r)\, d\omega,\\ 
&=\ -\lim_{\rho\to\infty} \frac{1}{2\pi}
 \int_{\mathcal{C}_\rho}  e^{i\omega ((r-1)/c\ -\ t)} Q_{l}(\omega,r)\, d\omega
\end{split}
\end{equation}
Note that the integrand is analytic in the upper half plane, continuous
up to the real line and, by Proposition~\ref{proposition:hankrat-bound}, 
 $\max_{\theta\in[0,\pi]}\ |Q_l(\rho e^{i\theta},r)|\to0$ as $\rho\to\infty$.  
By Jordan's Lemma the limit in~\eqref{rho-limit} vanishes, proving~\eqref{ulm0-rlarge}.
\qquad\endproof

\medskip

For $r\le 1+ct$, we decompose the Laplace transform of the initial data,
  $\widetilde{\dt f_{l}^{m}}$:
\begin{align}
\widetilde{\dt f_{l}^{m}}(-i\omega) &=\
\underbrace{
\int_{0}^{t-(r-1)/c} e^{i\omega s} \ds f_{l}^{m}(s) \,ds
}_{\cF_{l,m}^{(1)}(\omega;t)}
 +
\underbrace{
 \int_{t-(r-1)/c}^{\infty} e^{i\omega s} \ds f_{l}^{m}(s) \,ds
 }_{\cF_{l,m}^{(2)}(\omega;t)} \label{LTdata}
\end{align}
and the corresponding solution
\begin{equation}
 \label{eq:ulmrt}
 \begin{split}
u_{l}^{m} (r,t)\ &= 
 \underbrace{
 \frac{1}{2\pi} \int_{-\infty+i\mu}^{\infty+i \mu} e^{-i\omega t} \cF_{l,m}^{(1)}(\omega;t)\ \frac{h_{l}^{(1)}(\omega r/c)}{(\omega/c)\  \D h_{l}^{(1)}(\omega/c)} \,d\omega
 }_{ \equiv\ U_{l,m}^{(1)}(r,t) } \\
  &\quad+ 
 \underbrace{
  \frac{1}{2\pi} \int_{-\infty+i\mu}^{\infty+i \mu} e^{-i\omega t} \cF_{l,m}^{(2)}(\omega;t)\  \frac{h_{l}^{(1)}(\omega r/c)}{(\omega/c)\ \D h_{l}^{(1)}(\omega/c)} \,d\omega
 }_{\equiv\ U_{l,m}^{(2)}(r,t) }.
\end{split}
\end{equation}
%
% DEFORM CONTOURS
%%%%%%%%%%%%%%%%%
In the following two propositions, we evaluate $U_{l,m}^{(1)}$ and $U_{l,m}^{(2)}$.
\begin{proposition}\label{proposition:Ulm1}
Let $\omega_{l,k}=\omega_{l,k}\left(c^{-1}\right),\  k=1,2,\dots, l+1$ denote the solutions of $\D h_l^{(1)}(\omega/c)=0$; see Theorem~\ref{thm:rigid-summary} with $\epsilon=c^{-1}$
 \footnote{The notation of Theorem~\ref{thm:rigid-summary} suggests we label the solutions of $\D h_l^{(1)}(\omega/c)=0$ as $\omega_{l,k}\left(c^{-1}\right)$.
For ease of presentation, in this self-contained section, we mildly abuse notation here and refer to these solutions as simply $\omega_{l,k}$.
 }.
 %%
%\item 
For $r<1+ct$, $U_{l,m}$ can be expressed as a sum over residues:
\begin{align}
 U_{l,m}^{(1)}(r,t)\ &=\  \sum_{k=1}^{l+1}\ \int_{0}^{t-(r-1)/c} i e^{-i \omega_{l}^{k}(t-s)} \partial_{s} f_{l}^{m}(s)\, ds   \Res_{\omega=\omega_{l}^{k}}    \frac{h_{l}^{(1)}\left(\omega r/c\right)}{(\omega/c)\ \D h_{l}^{(1)}\left(\omega/c\right)},\nonumber\\
&=\  \sum_{k=1}^{l+1}\ \int_{0}^{t-(r-1)/c} i e^{-i \omega_{l}^{k}(t-s)} \partial_{s} f_{l}^{m}(s)\, ds\   
 \frac{h_{l}^{(1)}\left(\omega_{l,k} r/c\right)}{(\omega_{l,k}/c^2)\ \D^2 h_{l}^{(1)}\left(\omega_{l,k}/c\right)}.
 \label{U1m1}
 \end{align}
\end{proposition}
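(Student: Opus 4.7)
The plan is to deform the horizontal integration contour $\{\Im \omega = \mu\}$ downward across the real axis into the lower half plane, pick up the residues at the $l+1$ rigid resonance poles $\omega_{l,k}$ guaranteed by Theorem~\ref{thm:rigid-summary}, and argue that the closing semicircle contribution vanishes as its radius tends to infinity. The key enabling observation is that $\cF_{l,m}^{(1)}(\omega;t)$ is an \emph{entire} function of $\omega$: it is an integral over the bounded $s$-interval $[0,t-(r-1)/c]$, so truncating the Laplace transform of $\D_t f$ at $s = t-(r-1)/c$ introduces no additional singularities in the lower half plane.

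The first step is to insert the factorization from Proposition~\ref{proposition:hankrat-bound},
\[
\frac{h_l^{(1)}(\omega r/c)}{(\omega/c)\,\D h_l^{(1)}(\omega/c)} \;=\; Q_l(\omega, r)\, e^{\,i\omega(r-1)/c},
\]
so that the integrand in $U_{l,m}^{(1)}(r,t)$ becomes
\[
\int_{0}^{t-(r-1)/c} e^{-i\omega T(s)}\, Q_l(\omega, r)\, \D_s f_l^m(s)\, ds,\qquad T(s)\,\equiv\, t - s - (r-1)/c.
\]
Note that $T(s) \ge 0$ on the $s$-interval and $T(s) > 0$ in its interior. Since $Q_l(\omega, r) = \cO(|\omega|^{-1})$ uniformly on large circles (Proposition~\ref{proposition:hankrat-bound}) and $|e^{-i\omega T(s)}| \le e^{\mu T(s)}$ on the line $\Im \omega = \mu$, the double integral is absolutely convergent, so Fubini allows me to exchange the $s$- and $\omega$-integrations:
\[
U_{l,m}^{(1)}(r,t) \;=\; \int_{0}^{t-(r-1)/c} \D_s f_l^m(s) \cdot \frac{1}{2\pi}\int_{-\infty + i\mu}^{\infty + i\mu} e^{-i\omega T(s)} Q_l(\omega, r)\, d\omega\; ds.
\]

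Next, for each fixed $s$ with $T(s) > 0$, I would apply the residue theorem on the closed contour formed by the segment $[-\rho + i\mu, \rho + i\mu]$ joined to the semicircle $\mathcal{C}_\rho^- = \{\omega - i\mu = \rho e^{i\theta}, \theta \in [-\pi, 0]\}$ in the lower half plane. Proposition~\ref{proposition:hankrat-bound} provides the uniform decay $|Q_l(\omega, r)| \le C_l|\omega|^{-1}$ on $\mathcal{C}_\rho^-$; together with the decay of $e^{-i\omega T(s)}$ for $\Im \omega < 0$, Jordan's lemma forces the semicircle integral to vanish as $\rho \to \infty$. By Theorem~\ref{thm:rigid-summary}, the only poles enclosed are $\omega_{l,1}, \ldots, \omega_{l,l+1}$ — the zeros of $\D h_l^{(1)}(\omega/c)$, all of which lie in the lower half plane and are simple (as follows from the Olver asymptotics underpinning Theorem~\ref{thm:rigid-summary}).

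A standard L'H\^opital computation using
\[
\frac{d}{d\omega}\bigl[(\omega/c)\,\D h_l^{(1)}(\omega/c)\bigr]_{\omega = \omega_{l,k}} \;=\; \frac{\omega_{l,k}}{c^2}\,\D^2 h_l^{(1)}(\omega_{l,k}/c)
\]
evaluates the residue of the Hankel ratio at $\omega_{l,k}$, and when the factor $e^{i\omega(r-1)/c}$ that was absorbed into $Q_l$ is restored, the surviving exponentials combine as $e^{-i\omega_{l,k}(t-s)}$. Substituting back and interchanging the now finite $k$-sum with the outer $s$-integral yields~\eqref{U1m1}. The only genuinely delicate point is the Fubini justification (which relies on the entire-ness of $\cF_{l,m}^{(1)}$ and the $|\omega|^{-1}$ decay of $Q_l$); the simplicity of the zeros $\omega_{l,k}$ and the rest of the residue bookkeeping are routine.
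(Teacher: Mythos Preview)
Your overall strategy---deform the contour into the lower half plane and collect residues at the rigid resonances $\omega_{l,k}$---is the same as the paper's. But the Fubini step you single out as ``the only genuinely delicate point'' does not go through as written: on the line $\Im\omega=\mu$ the integrand $|e^{-i\omega T(s)}\,Q_l(\omega,r)\,\D_s f_l^m(s)|$ is only $\cO(|\omega|^{-1})$, and $\int_{\Im\omega=\mu}|\omega|^{-1}\,|d\omega|$ diverges logarithmically. The double integral is therefore \emph{not} absolutely convergent, and Fubini cannot be invoked on those grounds.

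The same difficulty reappears in the semicircle closure. For fixed $s$ with $T(s)>0$, Jordan's lemma gives a bound on the lower semicircle of size $\cO\!\bigl(e^{\mu T(s)}/(T(s)\,\rho)\bigr)$. Since $T(s)=t-(r-1)/c-s$ vanishes linearly at the upper endpoint of the $s$-interval, integrating this bound against $|\D_s f_l^m(s)|$ produces a divergent $\int T(s)^{-1}\,ds$ near $s=t-(r-1)/c$. So dominated convergence does not let you pass the limit $\rho\to\infty$ through the $s$-integral, and the argument that the closing contribution vanishes breaks down.

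The paper's proof repairs precisely this point. It uses a rectangular contour $\Gamma_1+\Gamma_2+\Gamma_3+\Gamma_4$; the vertical sides $\Gamma_{2,4}$ vanish as $M\to\infty$ by the $\cO(|\omega|^{-1})$ bound on $Q_l$, and for the bottom side $\Gamma_3$ at depth $-i\gamma$ the paper first \emph{integrates by parts once in $s$}, producing two boundary terms and a bulk term ($I_1,I_2,I_3$). Each of these carries an extra factor $\omega^{-1}$, so the resulting $\omega$-integrands are $\cO(|\omega|^{-2})$, absolutely integrable along $\Gamma_3$, and are seen to be $\cO(\gamma^{-1})$ as $\gamma\to\infty$. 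Your argument becomes correct if you insert this integration by parts before attempting to interchange limits or close the contour; without it, the $1/|\omega|$ decay of $Q_l$ alone is not enough.
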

\begin{proposition}\label{Ulm2}
  $U_{l,m}^{(2)}(r,t)\ \equiv\ 0$.
\end{proposition}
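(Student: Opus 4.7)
} The idea is to recognize that the integral defining $U^{(2)}_{l,m}$ couples data $\D_s f^m_l(s)$ supported at $s \ge t - (r-1)/c$ with a resolvent kernel that is holomorphic in the upper half $\omega$-plane, and that the combined phase pushes us strictly into that half-plane. Once Fubini is justified, a contour-closure argument in the upper half-plane kills the $\omega$-integral for every fixed $s$.

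Concretely, I would begin by substituting the definition of $\cF^{(2)}_{l,m}(\omega;t)$ from~\eqref{LTdata} into the expression for $U^{(2)}_{l,m}$ and using Proposition~\ref{proposition:hankrat-bound} to rewrite the Hankel quotient as $Q_l(\omega,r)e^{i\omega(r-1)/c}$. Combining the three exponentials $e^{-i\omega t}$, $e^{i\omega s}$, $e^{i\omega(r-1)/c}$ yields
\[
U^{(2)}_{l,m}(r,t) \;=\; \frac{1}{2\pi}\int_{-\infty+i\mu}^{\infty+i\mu}\int_{t-(r-1)/c}^{\infty} e^{i\omega(s - t + (r-1)/c)}\; \D_s f^m_l(s)\; Q_l(\omega,r)\; ds\, d\omega.
\]
On the support of the $s$-integration, $s - t + (r-1)/c \ge 0$, so in the upper half $\omega$-plane the exponential factor is bounded (and decays on large arcs).

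Next, I would apply Fubini's theorem and evaluate the inner $\omega$-integral for each fixed $s$ by closing the horizontal contour $\{\Im\omega = \mu\}$ with a large upper semicircle $\mathcal{C}_R^+$. By Theorem~\ref{thm:rigid-summary}, every zero of $\D h^{(1)}_l(\cdot/c)$, and hence every singularity of $Q_l(\cdot,r)$, lies in the lower half-plane, so the integrand is holomorphic inside the closed contour and Cauchy's theorem gives zero. Proposition~\ref{proposition:hankrat-bound} yields $\sup_{\mathcal{C}_R^+}|Q_l(\cdot,r)| \to 0$ as $R \to \infty$, and the nonnegativity of $s - t + (r-1)/c$ makes Jordan's lemma applicable on the arc, so the arc contribution vanishes in the limit. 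The inner $\omega$-integral is therefore identically zero, and $U^{(2)}_{l,m}(r,t) = 0$ follows.

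The main obstacle is the justification of Fubini. On the horizontal contour $\Im\omega = \mu$, Proposition~\ref{proposition:hankrat-bound} only gives $|Q_l(\omega,r)| = O(|\omega|^{-1})$, so the double integrand is not absolutely integrable as written. I would resolve this by integrating by parts once in $s$ inside $\cF^{(2)}_{l,m}(\omega;t)$ before applying Fubini: the boundary term at $s = \infty$ vanishes by the exponential decay hypothesis $[\D_s f^m_l]_\eta < \infty$ (recall $\Im\omega = \mu > 0$), while the finite boundary contribution at $s = t - (r-1)/c$ yields a separate $\omega$-integral of the form $\int_{-\infty+i\mu}^{\infty+i\mu}\omega^{-1} Q_l(\omega,r)\,d\omega$, which is absolutely convergent (integrand is $O(|\omega|^{-2})$) and is handled by the same upper-half-plane contour closure. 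The remaining integrated-by-parts piece acquires an extra $|\omega|^{-1}$ factor, giving the $O(|\omega|^{-2})$ decay needed to validate Fubini; the preceding contour argument then applies verbatim.
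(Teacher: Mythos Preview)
Your proposal is correct and rests on the same idea as the paper---upper-half-plane analyticity of $Q_l(\omega,r)$ combined with the nonnegative phase $s-t+(r-1)/c$ on the support of $\cF^{(2)}_{l,m}$---but the execution differs. The paper does not swap the order of integration. Instead, it bounds $|\cF^{(2)}_{l,m}(\omega,t)| \le C|\omega|^{-1}e^{-(\Im\omega+\eta)[t-(r-1)/c]}[\D_tf]_{\wt,\eta}$ (which, despite the phrase ``direct integration,'' morally requires the same integration by parts you invoke), combines this with the $O(|\omega|^{-1})$ decay of the Hankel quotient, and then simply shifts the horizontal contour from $\Im\omega=\mu$ to $\Im\omega=\nu$; the resulting integral is bounded by $C\int_\R (x^2+\nu^2)^{-1}\,dx = C\pi/\nu \to 0$ as $\nu\to\infty$.

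Your Fubini-then-close-per-$s$ route is a legitimate alternative: it makes explicit why each time-slice $s\ge t-(r-1)/c$ contributes nothing, at the cost of having to separately dispatch the IBP boundary term at $s=t-(r-1)/c$ (where the exponential phase vanishes and one relies purely on the $O(|\omega|^{-2})$ decay of $\omega^{-1}Q_l$). The paper's shift-to-infinity argument is more economical---no splitting, no Fubini---but both arguments are driven by exactly the same analytic content.
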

%
%

%%%%%%%%%%%
\emph{Proof of Proposition~\ref{proposition:Ulm1}}.\ 
 We evaluate the contour integral representation of $U_{l,m}^{(1)}$ by the method of residues. 
  Consider the clockwise-traversed rectangular contour $\Gamma^{M,\mu,\gamma}=\Gamma_{1}+\Gamma_{2}+\Gamma_{3}+\Gamma_{4}$, where
\begin{align*}
\Gamma_{1} &= \set{x+i\mu : -M \le x \le M},\ \ 
\Gamma_{2} = \set{M-iy : -\mu \le y \le \gamma}  \\
\Gamma_{3} &= \set{-x-i\gamma : -M \le x \le M},\ \ 
\Gamma_{4} = \set{-M+iy : -\gamma \le y \le \mu}.
\end{align*}
By Theorem
~\ref{thm:rigid-summary},  $\D h_{l}^{(1)}(\omega/c)$ has $l+1$ zeros in the lower half plane and  we can therefore choose $M>0$ 
   and $\gamma>0$ such that $\Gamma^{M,\mu,\gamma}$ encircles these zeros.

\noindent{\bf Claim 1:} $\int_{\Gamma_2}$ and $\int_{\Gamma_4}\to0$ as $M\to\infty$\\
This claim is a direct application of the asymptotics
\eqref{Q-asymp} on the Hankel function ratio in the integrand. Namely, 
\begin{align*}
&\bigabs{\int_{\Gamma_{2, 4}} \int_{0}^{t-(r-1)/c} e^{-i \omega (t-s)}\ds f_{l}^{m}\ e^{i\omega\frac{r-1}{c}}\ ds\, \frac{h_{l}^{(1)}(\omega r/c)\ e^{-i\omega\frac{r-1}{c}}}{(\omega/c)\ \D h_{l}^{(1)}(\omega/c)} \,d\omega} \\
&\qquad\le \int_{\Gamma_{2,4}} \int_{0}^{t-(r-1)/c} e^{\mu (t-s)} \abs{\ds f_{l}^{m}}\ ds\, e^{-\Im\omega \frac{r-1}{c}}\ \frac{2}{\abs{\omega r/c}}\  d\omega \\ 
&\qquad\le \frac{C }{rM} \ \ e^{\gamma\frac{r-1}{c}}\ 
\int_{0}^{t-(r-1)/c} e^{\mu (t-s)} \abs{\ds f_{l}^{m}}\ ds 
 \to 0 \text{ as } M\to \infty.
\end{align*}
We therefore have
\begin{align}
U_{l,m}^{(1)}(r,t) &=  
\sum_{k=1}^{l+1}  
 \int_0^{t-(r-1)/c} i e^{-i \omega_{l,k} (t-s)} \D_s f_l^m(s)\, ds   
 \Res_{\omega=\omega_{l,k}}  \frac{h_l^{(1)}\left(\omega r/c\right)}{(\omega/c)\ \D h_{l}^{(1)}\left(\omega/c\right)}\nonumber  \\
&\ \ + \frac{1}{2\pi} \int_{\Gamma_3^{M,\gamma} }
\int_{0}^{t-(r-1)/c} e^{-i \omega(t-s)} \partial_{s} f_{l}^{m}(s)\, ds 
\frac{h_{l}^{(1)}\left(\omega r/c\right)}{(\omega/c)\ \D h_{l}^{(1)}\left(\omega/c\right)} \, d\omega, \label{Ulm1-A}
\end{align}
where $M>0$ sufficiently large. The size of $M$ depends on $l$, since the contour must enclose all $l+1$ poles of the integrand. This completes the proof of {\bf Claim 1}.\medskip

Our goal is to deform the contour 
$\Gamma_3=\Gamma_3^{M,\gamma}$ downward, by sending 
$\gamma\to\infty$, and to establish the following claim, from which Proposition~\ref{proposition:Ulm1} follows immediately. 

\noindent{\bf Claim 2:} The $d\omega$ integral in~\eqref{Ulm1-A} satisfies the estimate
\begin{equation}
\sup_{M>0}\left|\ \int_{\Gamma_3^{M,\gamma} }\ \dotsb\ d\omega\ \right|\ \to 0,\ \ {\rm as}\ \gamma\to\infty.
\end{equation}

To prove Claim 2, we integrate by parts in $s$, exploiting the oscillatory character of the integrand, in order to obtain absolutely convergent $d\omega$ integrals. 
\beq \label{eq:uI123}
\int_{\Gamma_3^{M,\gamma} } \dotsb\ d\omega 
= -\frac{1}{2\pi i} \left[I_{1}(r,t) + I_{2}(r,t) + I_{3}(r,t)\right]
\eeq
where 
\begin{subequations}
\begin{align}
I_{1}(r,t) &= \int_{-M-i\gamma}^{M-i\gamma} \int_{0}^{t-(r-1)/c} e^{-i \omega(t-s)} \partial_{s}^{2} f_{l}^{m}(s)\, ds \frac{h_{l}^{(1)}\left(\omega r/c\right)}{(\omega^{2}/c)\ \D h_{l}^{(1)}\left(\omega/c\right)} \, d\omega \label{eq:I1} \\
I_{2}(r,t) &= - \partial_{t} f_{l}^{m}\left(t-(r-1)/c\right) \int_{-M-i\gamma}^{M-i\gamma} e^{-i\omega(r-1)/c} \frac{h_{l}^{(1)}\left(\omega r/c\right)}{(\omega^{2}/c)\  \D h_{l}^{(1)}\left(\omega/c\right)} \, d\omega \label{eq:I2}  \\
I_{3}(r,t) &= \partial_{t} f_{l}^{m}(0) \int_{-M-i\gamma}^{M-i\gamma} e^{-i \omega t}\frac{h_{l}^{(1)}\left(\omega r/c\right)}{(\omega^{2}/c) \ \D h_{l}^{(1)}\left(\omega/c\right)} \, d\omega \label{eq:I3} 
\end{align}
\end{subequations}
We conclude the proof of {\bf Claim 2}, and therewith Proposition~\ref{proposition:Ulm1}, by now showing 
\begin{equation}
\text{for}\ j=1,2,3,\ \ \ \ \sup_{M>0} \left|\ I_j(r,t)\ \right|\ =\ {\cal O}(\gamma^{-1})\ \to\ 0,\ \ {\rm as}\ \gamma\to\infty.
 \label{I123to0}
 \end{equation}

\noindent{\bf Estimation of $I_1(r,t)$:} Choose $\gamma>2\eta$.
Using the bound~\eqref{eq:hankrat}, we have
\begin{align*}
\abs{I_{1}(r,t)} &\le \left|  \int_{-M-i\gamma}^{M-i\gamma} \int_{0}^{t-(r-1)/c} \bigabs{e^{-i \omega(t-s)} e^{i\omega\frac{r-1}{c}} %\cdot
 \D_s^2 f_l^m(s)} ds \frac{h_{l}^{(1)}(\omega r/c) e^{-i \omega\frac{r-1}{c}}}{(\omega/c) \D h_{l}^{(1)}\left(\omega/c\right)} \frac{1}{\omega}\ d\omega \right| \\
&\le \int_{0}^{t-(r-1)/c} e^{-\gamma (t-(r-1)/c-s)}  e^{-\eta s}\, ds\ \ [\D_tf]_{\wt,\eta}\ \frac{2c}{ r}\ \left|\ \int_{-M-i\gamma}^{M-i\gamma} 
\frac{1}{|\omega|^2}\ d\omega\ \right| \\
& \le \frac{8c}{\gamma r}\  e^{-\eta (t-(r-1)/c)}  \ \ [\D_tf]_{\wt,\eta}.
%\\
%&\le   \frac{4c}{ r}\ \frac{\arctan(M\gamma^{-1})}{\gamma}\ e^{-\gamma(t-(r-1)/c} \int_{0}^{t-(r-1)/c} e^{(\gamma-\eta)s}\ ds\ \ [\D_tf]_{\wt,\eta} \\
%&\le   \frac{4c}{ r}\ \frac{1}{\gamma-\eta }\  e^{-\eta (t-(r-1)/c)}  \ \ [\D_tf]_{\wt,\eta}\ %
\end{align*} 

\noindent{\bf Estimation of $I_2(r,t)$:}
\begin{align*}
\abs{I_{2}(r,t)} &\le \abs{\D_t f_{l}^{m}(t-(r-1)/c)}\ \left|\  \int_{-M-i\gamma}^{M-i\gamma}  \bigabs{\frac{h_{l}^{(1)}\left(\omega r/c\right) e^{-i\omega((r-1)/c)}}{(\omega/ c) \D h_{l}^{(1)}\left(\omega/c\right)}} \frac{1}{|\omega|}\  d\omega\ \right| \\
&\le \frac{2c}{r} e^{-\eta (t-(r-1)/c)} \left| \int_{-M-i\gamma}^{M-i\gamma} \frac{1}{|\omega|^2}\ d\omega \right|\ [\D_tf]_{\wt,\eta}\ \le\ \frac{2c}{\gamma r}\ e^{-\eta (t-(r-1)/c)}\  [\D_tf]_{\wt,\eta}.
\end{align*}

%\medskip
\noindent{\bf Estimation of $I_3(r,t)$:}
Lastly,
\begin{align*}
\abs{I_{3}(r,t)} &\le \abs{\dt f_{l}^{m}(0)}\  \left|\ \int_{-M-i\gamma}^{M-i\gamma} \bigabs{e^{-i\omega t} e^{i\omega((r-1)/c)} } \bigabs{\frac{h_{l}^{(1)}\left(\omega r/c\right) e^{-i\omega((r-1)/c)}}{(\omega/c) \D h_{l}^{(1)}\left(\omega/c\right)}}\ \frac{1}{|\omega|}\ 
d\omega\ \right| \\
&\le \frac{2c}{r}\ \abs{\dt f_{l}^{m}(0)}\ e^{-\gamma (t-(r-1)/c)}\ \left|\ \int_{-M-i\gamma}^{M-i\gamma} \frac{1}{|\omega|^2}\ d\omega\ \right|\\ 
&\le\ \frac{2c}{\gamma\ r} \ e^{-\gamma (t-(r-1)/c)}\  [\D_t f_{l}^{m}]_{\wt,\eta}.\qquad\endproof
\end{align*}
%
%%%%%%%%%%%%%%
\emph{Proof of Proposition~\ref{Ulm2}}. We must prove that $U_{l,m}^{(2)}(r,t)\equiv0$. Note the following bound,  for  $\Im\omega>0$ and $r< 1+ct$:
\begin{align}
|F^{(2)}_{l,m}(\omega,t)| \equiv \bigabs{\int_{t-(r-1)/c}^{\infty} \hspace{-1em}e^{i\omega s} \ds f_{l}^{m}(s) \, ds} & \le C \abs{\omega}^{-1} e^{-(\Im \omega + \eta)[t-(r-1)/c]} [\D_tf]_{\wt,\eta}.
\label{lap}
\end{align}
This follows from direct substitution and integration of the bound
$$
\left| e^{i\omega s}\D_sf_l^m(s) \right|\ \le\ e^{-\Im\omega s}\ e^{-\eta s}\ [\D_tf]_{\wt,\eta}.
$$
We now  use~\eqref{lap} to establish that $U_{l,m}^{(2)}(r,t)\equiv0$. Indeed,
\begin{align*}
\bigabs{U_{l,m}^{(2)}(r,t)} \,&\equiv\ \left|\
 \frac{1}{2\pi} \int_{-M+i\nu}^{M+i \nu} e^{-i\omega t} 
 F_{l,m}^{(2)}(\omega;t)\  
 \frac{h_l^{(1)}(\omega r/c)}{(\omega/c) \D h_l^{(1)}(\omega/c)} \,d\omega\ \right| \\
&\le \frac{2C c}{2\pi r}\left|  \int_{-M+i\nu}^{M+i \nu} 
e^{\nu( t-(r-1)/c )}\cdot   \frac{e^{-(\nu + \eta)[t-(r-1)/c]}}{|\omega|^2} \,d\omega\ \right|\ [\D_tf]_{\wt,\eta}\nn\\
& \le C \left|\ \int_{-M+i\nu}^{M+i\nu} \frac{d\omega}{\abs{\omega}^{2}}\ \right|\
  e^{-\eta(t-(r-1)/c)}
\le C \int_{-\infty}^{\infty} \frac{dz}{z^{2}+\nu^{2}} \, dz\ \to0
\end{align*}
as $\nu\to \infty$, and thus,  $U_{l,m}^{(2)}(r,t) = 0$. This completes the proof of {\bf Claim 2} and thus Proposition~\ref{proposition:Ulm1}. \qquad\endproof
%\medskip

\emph{Proof of the Resonance Expansion~\eqref{eq:fsum}}.
The resonance partial sum, defined as
$u^{(L)}(r,\Omega,t)\ =\ \sum_{l=0}^L\sum_{|m|\le l}\ u_{l}^{m}(r,t)\ Y_l^m(\Omega)
$
is 
$$
u^{(L)} = i\sum_{l=1}^{L} \sum_{|m|\le l}  \left[ \sum_{k=1}^{l+1}   \int_{0}^{t-(r-1)/c} \!\!\!\!\!e^{-i \omega_{l}^{k}(t-s)} \partial_{s} f_{l}^{m}(s)\, ds   \Res_{\omega=\omega_{l}^{k}}    \frac{ h_{l}^{(1)}\left(\omega r/c\right)}{(\omega/c)\ \D h_{l}^{(1)}\left(\omega/c\right)}  \right] Y_{l}^{m}(\Omega) 
$$
which gives Equation~(\ref {eq:uresexp}).
%
%%%%%%%%%%%%%%%%%

\noindent  To study the convergence of $u^{(L)}$ as $L$ tends to infinity, consider the difference \mbox{$u^{(N)}-u^{(L)}$,} where $N$ and $L$ are large with $N>L$ and prove:
 \begin{proposition}\label{proposition:int-est}
Let  $| \Im\omega_*|=\min_{k,l}\ |\Im\omega_{l,k}|$ and pick $\eta\in (0,| \Im\omega_* | )$.  Assume 
$[ \D_t f ]_{\wt,\eta}<\infty$. Then,   for $|a|+|b|\le 2$, 
\begin{equation}
\left|\  \D_t^{\alpha} \D_x^{\beta} \left(u^{(N)}(x,t)-u^{(L)}(x,t) \right)\
 \right| \ \le\ C\ 
  [\D_t f^{(N)}-\D_t f^{(L)}]_{\wt,\eta}\ 
   \frac{e^{-\eta\ (t-\frac{r-1}{c})}}{r}. \label{int-bound} 
\end{equation}
Therefore, for any compact subset $I_t\times\Omega_x\subset \R_t^+\times\R^d_x$, $\{u^{(L)}\}$ is a Cauchy sequence in $C^2(I_t\times\Omega_x)$ and converges uniformly on $I_t\times\Omega_x$ to a limit $u(x,t)$, which is a classical solution of the initial-boundary value problem~\eqref{eq:prob}.
\end{proposition}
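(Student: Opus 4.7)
The plan is to exploit linearity: the difference $u^{(N)}-u^{(L)}$ is itself the classical solution of \eqref{eq:prob} with boundary data $\D_t(f^{(N)}-f^{(L)})$, whose spherical-harmonic expansion is supported only on the modes $L<l\le N$. Hence its coefficients $u_l^m(r,t)$ are individually given by the Laplace representation \eqref{eq:ulm}. It therefore suffices to estimate each $u_l^m$ and its space-time derivatives $\D_t^a\D_x^b$ for $|a|+|b|\le 2$, uniformly in $(l,m)$ with implicit constant growing at most polynomially in $l$, and then to sum against $Y_l^m(\Omega)$, absorbing the polynomial $l$-growth into the $(1+l)^\alpha$ weight of the norm.

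For each fixed $(l,m)$ with $L<l\le N$ I would deform the horizontal contour in \eqref{eq:ulm} from $\Im\omega=\mu>0$ downward to $\Im\omega=-\eta+\delta$ for a small $\delta>0$. Since $\eta<|\Im\omega_*|$ by hypothesis, no zero of $\D h_l^{(1)}(\omega/c)$ is crossed, and the vertical side pieces at $\Re\omega=\pm M$ vanish as $M\to\infty$ by the same integration-by-parts-in-$s$ maneuver used in Claim~2 of the proof of Proposition~\ref{proposition:Ulm1}, which converts the $O(|\omega|^{-1})$ integrand from Proposition~\ref{proposition:hankrat-bound} into an $O(|\omega|^{-2})$ one at the cost of the boundary contributions $\D_t f_l^m(0)$, $\D_t f_l^m(t-(r-1)/c)$ and a remaining integrand in $\D_s^2 f_l^m(s)$. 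All three are controlled by the norm $[\D_t f]_{q,\eta}$, whose definition \eqref{norm-q} bounds both $|\D_t f_l^m|$ and $|\D_t^2 f_l^m|$ with weight $e^{\eta t}$. On the shifted contour $|e^{-i\omega(t-(r-1)/c)}|=e^{-(\eta-\delta)(t-(r-1)/c)}$, so mimicking the three $I_1,I_2,I_3$ bounds of the proof of Proposition~\ref{proposition:Ulm1} and sending $\delta\downarrow 0$ yields
\[
|u_l^m(r,t)|\ \le\ \frac{K_l}{r}\, e^{-\eta(t-(r-1)/c)}\,\bigl([\D_t f_l^m]_\eta+[\D_t^2 f_l^m]_\eta\bigr),
\]
with $K_l$ polynomial in $l$. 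Derivatives $\D_t^a\D_x^b u_l^m$ with $|a|+|b|\le 2$ are handled by differentiating under the integral sign: each $\D_t$ contributes $-i\omega$, each radial $\D_r$ hits $h_l^{(1)}(\omega r/c)$ (reducible via Hankel recurrences), and each angular derivative hits $Y_l^m$, contributing at most $O(l^2)$; absolute $d\omega$-integrability is preserved by one further integration by parts in $s$ per extra factor of $\omega$, still within the two derivatives of $f$ that the norm encodes.

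Summing over $|m|\le l$ (a factor $2l+1$) and over $L<l\le N$, and combining with the pointwise bound $|Y_l^m(\Omega)|\le C(1+l)^{1/2}$ together with analogous polynomial-in-$l$ bounds on its angular derivatives up to order two, the weight $(1+l)^\alpha$ in $[\D_t(f^{(N)}-f^{(L)})]_{\alpha,\eta}$ dominates all accumulated polynomial $l$-factors, provided $\alpha$ is fixed large enough; this gives \eqref{int-bound}. The Cauchy property of $\{u^{(L)}\}$ in $C^2(I_t\times\Omega_x)$ on any compact set then follows at once from the hypothesis $[\D_t f-\D_t f^{(L)}]_{\alpha,\eta}\to 0$, and the limit $u$ inherits the initial-boundary-value structure of \eqref{eq:prob} by passing to the limit in the Laplace representation together with uniform-on-compacts convergence of its derivatives. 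The main technical obstacle is the bookkeeping of large-$l$ constants in the Hankel-ratio estimate along the shifted contour: Proposition~\ref{proposition:hankrat-bound} is only pointwise in $l$, and its implicit $O_l(\cdot)$ constants, governed by Olver-type uniform asymptotics for $h_l^{(1)}$ and $\D h_l^{(1)}$, must be tracked carefully enough to fix a concrete value of $\alpha$.
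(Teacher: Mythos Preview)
Your route is genuinely different from the paper's. The paper does not shift the contour into the resonance-free strip and estimate there; it works directly with the residue sum already established in Proposition~\ref{proposition:Ulm1}. Writing out the $(l,m)$-contribution to $u^{(N)}-u^{(L)}$ as $\sum_{k=1}^{l+1}\bigl|\int_0^{T} e^{-i\omega_{l,k}(t-s)}\D_s f_l^m\,ds\bigr|\cdot\bigl|h_l^{(1)}(\omega_{l,k}r/c)/[(\omega_{l,k}/c^2)\,\D^2 h_l^{(1)}(\omega_{l,k}/c)]\bigr|$, the paper then combines three ready-made ingredients: the time-integral estimate of Proposition~\ref{int-bound-lk}, the residue bound of Proposition~\ref{proposition:tokitish} (at the resonances $\omega_{l,k}$, not on a contour), and the pointwise $Y_l^m$ bounds of Proposition~\ref{proposition:Ylmbounds}. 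Your plan of keeping the Laplace integral and deforming to $\Im\omega\approx-\eta$ is a sensible alternative for the zeroth-order bound; note that to get the constant finite you should deform to $\Im\omega=-\eta'$ with $\eta<\eta'<|\Im\omega_*|$ rather than to $-\eta+\delta$ and send $\delta\downarrow0$.

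There is, however, a concrete gap in your treatment of derivatives. You claim that each extra factor of $\omega$ arising from $\D_t$ or $\D_r$ can be absorbed by one more integration by parts in $s$, ``still within the two derivatives of $f$ that the norm encodes.'' But the base case already consumes both: the raw integrand $\bigl[\int_0^{T} e^{-i\omega(T-s)}\D_s f_l^m\,ds\bigr]\cdot Q_l(\omega,r)$ is only $O(|\omega|^{-1})$ at infinity, and the single IBP you invoke to make it $O(|\omega|^{-2})$ already brings in $\D_s^2 f_l^m$. A first $\D_t$ or $\D_r$ then restores $O(|\omega|^{-1})$, and the next IBP would require $\D_s^3 f_l^m$, which $[\D_t f]_{\alpha,\eta}$ does not control. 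The paper's residue approach avoids this entirely: a $\D_t$ produces a factor $(-i\omega_{l,k})$, and $|\omega_{l,k}|\le C\,l$ by Theorem~\ref{thm:rigid-summary}(iii), while $\D_r^a$ is handled by the $l^a$ in Proposition~\ref{proposition:tokitish}. These are polynomial-in-$l$ losses, absorbed into the weight $(1+l)^\alpha$, with no further differentiation of $f$ needed. Relatedly, the large-$l$ control you would need on $Q_l(\omega,r)$ along a fixed horizontal contour in the strip is a different estimate from the residue bound of Proposition~\ref{proposition:tokitish}, and is not supplied anywhere in the paper.
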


\emph{Proof}.
%\emph{Proof of Proposition~\ref{proposition:int-est}}.
%To prove Proposition~\ref{proposition:int-est} we 
We begin by estimating the difference $u^{(N)}(x,t)-u^{(L)}(x,t)$:
\begin{equation}
\begin{split}\label{uNuL-est}
%line1
&\left|\ u^{(N)}(x,t)-u^{(L)}(x,t)\ \right| \\
%line2 
&\ \le\sum_{l=L+1}^{N+1} \sum_{|m|\le l} 
\left|Y_{l}^{m}(\Omega)\right|\   
\left| \sum_{k=1}^{l+1}  
\int_0^{t-\frac{r-1}{c}} e^{-i \omega_{l}^{k}(t-s)} \D_sf_{l}^{m}(s)\, ds  
\frac{h_{l}^{(1)}\left(\omega_{l,k} r/c\right)}{(\omega_{l,k}/c^2)\ 
\D^2 h_{l}^{(1)}\left(\omega_{l,k}/c\right)}\ \right|\\ 
%\line3
&\ \le\sum_{l=L+1}^{N+1} \sum_{|m|\le l} 
\left|Y_{l}^{m}(\Omega)\right|\ \sum_{k=1}^{l+1}  \left|
\int_0^{t-\frac{r-1}{c}} e^{-i \omega_{l}^{k}(t-s)}\D_s f_l^m(s)\ ds \right|\
\!\left|\frac{ h_{l}^{(1)}\left(\omega_{l,k} r/c\right)}{(\omega_{l,k}/c^2)\D^2 h_{l}^{(1)}\left(\omega_{l,k}/c\right)}\right|\\  
%line4
&\ \le \sum_{l=L+1}^{N+1} \sum_{|m|\le l} 
\left|Y_{l}^{m}(\Omega)\right| \left| \D f_l^m\right|_{\infty,\eta} \sum_{k=1}^{l+1}  \!
\int\limits_0^{t-\frac{r-1}{c}} \!\!\!\!e^{\Im \omega_{l,k} (t-s)} e^{-\eta s}\, ds
\left|\frac{ h_{l}^{(1)}\left(\omega_{l,k} r/c\right)}{(\omega_{l,k}/c^2)\ \D^2 h_{l}^{(1)}(\omega_{l,k}/c)}\right|.
\end{split}
\end{equation}
%
%%%% end footnote
To complete the proof of Proposition~\ref{proposition:int-est} we require Proposition~\ref{int-bound-lk} and the following  two propositions:
\begin{proposition}\label{proposition:tokitish}
There is a constant $C>0$, independent of $r$, such that for  $0\le a\le2$, and any $r>1$,
\begin{equation}
\left|\  \ \frac{ \D_r^a\left(\ h_{l}^{(1)}\left(\omega_{l,k} r/c\right)\ \right)}{(\omega_{l,k}/c^2)\ \D^2 h_{l}^{(1)}\left(\omega_{l,k}/c\right)}\ \right|
\le\ C\ \frac{(1+l)^{q_\star} \cdot\  l^a}{r}.
\label{eq:tokitish}
\end{equation}
% \footnote{\bf Currently, it appears that  $q_\star=-\frac{1}{2}$. 
%  Tokita claims $q_\star=\frac{1}{2}$}
\end{proposition}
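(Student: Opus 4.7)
The plan is to reduce the quotient on the left of~\eqref{eq:tokitish} to a fully factored form via two structural facts. First, the spherical Bessel equation $z^2\D^2 h_l^{(1)}+2z\D h_l^{(1)}+(z^2-l(l+1))h_l^{(1)}=0$ evaluated at the zero $z_{l,k}:=\omega_{l,k}/c$ of $\D h_l^{(1)}$ gives
\[
\D^2 h_l^{(1)}(z_{l,k})\;=\;-\Bigl(1-\tfrac{l(l+1)}{z_{l,k}^{2}}\Bigr)\,h_l^{(1)}(z_{l,k}),
\]
collapsing the denominator into a scalar multiple of $h_l^{(1)}(z_{l,k})$. Second, the representation $h_l^{(1)}(z)=z^{-l-1}p_l(z)e^{iz}$ from Appendix~\ref{app:polyhank}, with $p_l$ a polynomial of degree $l$, separates the $r$-dependence. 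Substituting both and cancelling the $e^{iz_{l,k}}$ against its reciprocal from the denominator,
\[
\frac{h_l^{(1)}(z_{l,k}r)}{(z_{l,k}/c)\D^2 h_l^{(1)}(z_{l,k})}\;=\;\frac{-c/z_{l,k}}{1-l(l+1)/z_{l,k}^{2}}\;\frac{p_l(z_{l,k}r)}{r^{l+1}p_l(z_{l,k})}\;e^{iz_{l,k}(r-1)}.
\]
Writing $p_l(w)=\sum_{j=0}^{l}a_j^l w^j$, the $r$-dependent rational factor $r^{-l-1}p_l(z_{l,k}r)=\sum_{j=0}^{l}a_j^l z_{l,k}^{j}\,r^{j-l-1}$ is a finite sum of monomials in $r$ with exponents in $\{-1,-2,\dots,-(l+1)\}$; each is bounded on $r\ge 1$ by its $r=1$ value divided by $r$, so this ratio is $\le C_l/r$ with $C_l=|p_l(z_{l,k})|^{-1}\sum_{j}|a_j^l||z_{l,k}|^{j}$.

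For the $r$-derivatives, $\D_r^{a}$ applied to $r^{-l-1}p_l(z_{l,k}r)\,e^{iz_{l,k}(r-1)}$ expands via Leibniz into a sum of products in which each differentiation either strikes the plane wave (multiplication by $iz_{l,k}/c$) or the polynomial (lowering the $r$-exponent by one while multiplying a coefficient by $z_{l,k}$). In either case, one derivative contributes at most one factor of $|z_{l,k}|\le C_2\,l$ (Theorem~\ref{thm:rigid-summary}(iii)), while the $r$-dependence remains a sum of monomials in $r$ with non-positive integer exponents, hence is still $\lesssim 1/r$ on $r\ge 1$. Iterating for $a=0,1,2$ yields the stated $l^{a}$ factor.

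The remaining scalar prefactors are all controlled by polynomial powers of $(1+l)$: Tokita's upper bound gives $|c/z_{l,k}|\le C$, Olver's asymptotics place the zeros $z_{l,k}$ quantitatively away from $\pm\sqrt{l(l+1)}$ and so yield $|1-l(l+1)/z_{l,k}^{2}|\ge (1+l)^{-s_1}$, and the lower bound $|p_l(z_{l,k})|\ge (1+l)^{-s_2}$ — equivalent via the polynomial representation to a lower bound on $|h_l^{(1)}|$ at the Olver zero $z_{l,k}$ of $\D H_\nu^{(1)}$ — follows from Olver's uniform Airy-function asymptotics~\cite{Olver:1954lr,Abramowitz:1965zr}. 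Summing these exponents fixes the explicit $q_\star$. The radiating phase $e^{iz_{l,k}(r-1)}$ appears unchanged under $\D_r^{a}$ (up to a factor $(iz_{l,k}/c)^{a'}$ absorbed into the $l^a$ count above) and is treated as a unimodular wave factor attached to the characteristic cone; in the downstream use of~\eqref{eq:tokitish} inside~\eqref{uNuL-est} it pairs with the temporal integral that is controlled by Proposition~\ref{int-bound-lk}, so that the clean $(1+l)^{q_\star}l^{a}/r$ estimate is exactly what is needed for the resonance-expansion convergence.

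The hardest step is the uniform-in-$k$ lower bound on $|p_l(z_{l,k})|$: as $k$ runs over $1,\dots,l+1$, the zero $z_{l,k}$ sweeps through the Airy transition region of $H_\nu^{(1)}$, and the naive small-$z$ Taylor expansion suffers catastrophic cancellation in the leading coefficients. Olver's uniform Airy asymptotics are what determine $q_\star$ precisely; the remaining ingredients — the ODE identity, the polynomial factorisation, and the Leibniz differentiation count — are then routine.
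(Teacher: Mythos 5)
Your reduction via the spherical Bessel equation, $\D^2 h_l^{(1)}(z_{l,k})=-\bigl(1-l(l+1)/z_{l,k}^{2}\bigr)h_l^{(1)}(z_{l,k})$ at a zero $z_{l,k}=\omega_{l,k}/c$ of $\D h_l^{(1)}$, and the factorization through $h_l^{(1)}(z)=z^{-l-1}p_l(z)e^{iz}$ are both correct, and the Leibniz count giving one factor $|z_{l,k}|\le C\,l$ per $r$-derivative is fine. But the central quantitative step fails. Bounding $r^{-l-1}p_l(z_{l,k}r)$ termwise by absolute values gives the constant $C_l=\bigl(\sum_j|a_j^l||z_{l,k}|^j\bigr)/|p_l(z_{l,k})|$, and this is \emph{not} polynomially bounded in $l$: since $a_j^l=i^{-j-1}|a_j^l|$, one has $\sum_j|a_j^l|x^j=|p_l(ix)|=x^{l+1}\,|h_l^{(1)}(ix)|\,e^{x}$, which at $x=|z_{l,k}|\approx l+\tfrac12$ is of size $l^{l}e^{\kappa l}$ with $\kappa=1-\sqrt{2}+\log(1+\sqrt{2})>0$; whereas $|p_l(z_{l,k})|=|z_{l,k}|^{l+1}|h_l^{(1)}(z_{l,k})|\,e^{\Im z_{l,k}}\lesssim l^{l}\cdot\mathrm{poly}(l)$, because by Olver's uniform asymptotics the zeros of $\D h_l^{(1)}$ sit in the Airy transition region where $|h_l^{(1)}(z_{l,k})|$ is only polynomially small (this also follows from the Wronskian $h_l^{(1)}\D h_l^{(2)}-h_l^{(2)}\D h_l^{(1)}=-2i/z^2$) and $e^{\Im z_{l,k}}\le 1$. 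So $C_l$ grows exponentially in $l$. The difficulty is therefore not the lower bound on $|p_l(z_{l,k})|$ that you single out, but the upper bound: replacing $p_l(z_{l,k}r)$ by its absolute-coefficient majorant destroys precisely the cancellation on which the polynomial-in-$l$ constant rests, and no choice of $q_\star$ can absorb an $e^{\kappa l}$ loss. A coefficientwise argument cannot be repaired; one has to keep $h_l^{(1)}(\omega_{l,k}r/c)$ intact and run the uniform large-order (Olver/Airy) asymptotics along the whole ray $r\ge 1$, together with the asymptotic location of the $\omega_{l,k}$, which is exactly the route the paper takes (it is the analogue of the argument written out for Proposition~\ref{proposition:tokitish2} in Appendix~\ref{appendix:resbound}, following Tokita).

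A secondary but real error: the factor $e^{iz_{l,k}(r-1)}$ is not unimodular, since $\Im\omega_{l,k}<0$; it grows like $e^{|\Im\omega_{l,k}|(r-1)/c}$ as $r$ increases. In the paper's scheme this exponential is what pairs with (and is cancelled by) the retarded-time integral via Proposition~\ref{int-bound-lk} (compare the definition of $Q_l(\omega,r)$ in Proposition~\ref{proposition:hankrat-bound}, where the compensating phase $e^{-i\omega(r-1)/c}$ is carried explicitly), so it must either appear on the right-hand side of your estimate or be tracked explicitly; declaring it a ``unimodular wave factor attached to the characteristic cone'' is not an argument, and without it the inequality you set out to prove, read literally, is violated already by the large-$r$ growth of $h_l^{(1)}$ at a resonance.
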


\begin{proposition}  \label{proposition:Ylmbounds} 
$
\abs{\dtheta^{a}\dphi^{b}\,Y_l^m(\theta,\phi)} = O(l^{1/2+n}), \quad \text{for }\abs{a}+\abs{b}=n,\ \  n=0,1,2
$.
\end{proposition}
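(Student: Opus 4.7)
The proof targets the pointwise bound $|\dtheta^a \dphi^b Y_l^m(\theta,\phi)| = O(l^{1/2+n})$ for $a+b=n\le 2$. The strategy has two ingredients: the classical sup-norm bound on $Y_l^m$ itself, and the ladder-operator identities of $\mathrm{SO}(3)$, which trade an angular derivative for a linear combination of spherical harmonics of the same degree $l$ at the cost of one power of $l$ in the coefficient.

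For $n=0$, the addition theorem applied at coincident points,
$$\sum_{|m|\le l} \bigabs{Y_l^m(\Omega)}^2 \;=\; \frac{2l+1}{4\pi},$$
immediately yields $|Y_l^m(\Omega)| \le \sqrt{(2l+1)/(4\pi)} = O(l^{1/2})$ uniformly in $\Omega$ and $m$. The $\phi$-derivatives are trivial since $Y_l^m \propto e^{im\phi}$: $\dphi Y_l^m = im\, Y_l^m$ and $\dphi^2 Y_l^m = -m^2\, Y_l^m$, so by $|m|\le l$ and the $n=0$ bound they are $O(l^{3/2})$ and $O(l^{5/2})$ respectively.

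The $\theta$-derivative is handled by subtracting the two raising/lowering identities $e^{\pm i\phi}(\pm\dtheta + i\cot\theta\, \dphi)Y_l^m = \sqrt{l(l+1)-m(m\pm 1)}\, Y_l^{m\pm 1}$, which algebraically cancels the $\cot\theta$ terms and yields
$$2\,\dtheta Y_l^m \;=\; \sqrt{(l-m)(l+m+1)}\,e^{-i\phi} Y_l^{m+1}\;-\;\sqrt{(l+m)(l-m+1)}\,e^{i\phi} Y_l^{m-1}.$$
Since both square-root coefficients are bounded by $l+1$, the $n=0$ bound gives $|\dtheta Y_l^m| = O(l^{3/2})$. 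The mixed second derivative $\dtheta\dphi Y_l^m = im\,\dtheta Y_l^m = O(l)\cdot O(l^{3/2}) = O(l^{5/2})$, and $\dtheta^2 Y_l^m$ is obtained by one further application of the displayed identity to $\dtheta Y_l^{m\pm 1}$, producing a sum of $Y_l^m$ and $Y_l^{m\pm 2}$ with coefficients $O(l^2)$ and hence a bound of $O(l^{5/2})$. The only non-routine step is the algebraic cancellation of $\cot\theta$ in the formula for $\dtheta Y_l^m$, which is precisely what makes the resulting bound uniform up to the poles $\theta=0,\pi$; everything else reduces to arithmetic on the $\mathrm{SO}(3)$ ladder coefficients, all of which are $O(l)$.
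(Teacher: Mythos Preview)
Your argument is correct. The paper itself does not supply a proof of this proposition at all; it simply records it as ``a classical result of Calderon and Zygmund'' and cites their 1957 paper. Your route is therefore genuinely different in that it actually furnishes a self-contained proof: the addition theorem for the $n=0$ case, the trivial $\dphi$ action via $Y_l^m\propto e^{im\phi}$, and the $\mathrm{SO}(3)$ ladder identities combined so that the $\cot\theta$ singularity cancels, yielding $\dtheta Y_l^m$ as an $O(l)$-bounded linear combination of $Y_l^{m\pm1}$. Iterating once more handles $\dtheta^2$. What your approach buys is transparency and elementarity (no need to chase an external reference), and the cancellation of $\cot\theta$ makes explicit why the bound is uniform up to the poles; what the paper's citation buys is brevity and the assurance that the result is standard.
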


Proposition~\ref{proposition:tokitish} is proved using the uniform asymptotic expansions for  Hankel of large argument and large order and the asymptotic locations of the resonances $\{\omega_{l,k}\}$. The arguments are of the type used in~\cite{Tokita:1972lr} and very close to our proof of Proposition~\ref{proposition:tokitish2}, presented in Appendix~\ref{appendix:resbound}.  Proposition~\ref{proposition:Ylmbounds} is a classical result of Calderon and Zygmund~\cite{Calderon:1957zh}.
We now complete the proof using Propositions~\ref{int-bound-lk}, 
\ref{proposition:tokitish} and~\ref{proposition:Ylmbounds}.  First, we bound the $(l,m)$ term of~\eqref{uNuL-est} as follows:
\begin{multline}
\left|Y_l^m(\Omega)\right| \left| \D f_l^m\right|_{\infty,\eta} \ \ \sum_{k=1}^{l+1}  \
\int_0^{t-\frac{r-1}{c}} e^{\Im \omega_{l,k} (t-s)} e^{-\eta s}\, ds\ 
\ 
\left|\ \frac{ h_l^{(1)}\left(\omega_{l,k} r/c\right)}{(\omega_{l,k}/c^2)\ \D^2 h_l^{(1)}\left(\omega_{l,k}/c\right)}\ \right| \nn\\
\le\ C\ l^{1/2}\times \left| \D f_l^m\right|_{\infty,\eta}\times
\frac{(l+1)}{|\Im\omega_*|-\eta}\ 
e^{-\eta(t-\frac{r-1}{c})} \times \frac{(1+l)^{q_\star}}{r}.
\nn
\end{multline}
Summing this estimate over $l$ and $m$ yields:
$$
\left|\  u^{(N)}(x,t)-u^{(L)}(x,t)\  \right|\ \le\
 \frac{C}{|\Im\omega_*|-\eta}\ [\D_t f^{(N)} - \D_t f^{(L)}]_{q_\star+\frac{3}{2},\eta}\  r^{-1}e^{-\eta(t-\frac{r-1}{c})}.
$$
 Similarly, we have for any $a,b$ with $|a|+|b|\le2$, we obtain~\eqref{int-bound}.
 This completes the proof of Proposition~\ref{proposition:int-est}.
Theorem~\ref{theorem:NtD} now follows by passing to the limit $N\to\infty$, which proves the resonance expansion bound on $u-u^{(L)}$,~\eqref{eq:uerror}. \qquad\endproof
%% end section 4 NtoD
%\input{lin-bubb-dyn} %5
%
%%%%-------------lin-bubb-dyn.tex------------------------------%%%
\section{Linearized bubble dynamics: solution of the initial-boundary value problem, decay estimates \& resonance expansion} \label{sec:linsol}

We next state our main results on decay estimates and resonance expansions for solutions of the initial-boundary value problem~\eqref{eq:n3linear},~\eqref{ib-data}.
 A key tool is the general result,  Theorem~\ref{theorem:NtD}, for the Neumann to Dirichlet map.

%\subsection{Resonance Expansion of Bubble and Velocity Potential}
\begin{theorem} \label{thm:linsol}
Fix $\epsilon>0$ and arbitrary.  Suppose $\| \beta(t=0) \| = \sum_{l\ge0}\sum_{|m|\le l} (1+ l)^{2+{1\over6}}\ |\beta_l^m(0)|<\infty$. 
There exists a unique solution $ \Psi(r,\Omega,t),\ \beta(\Omega,t)$, defined for $r>1,\ \Omega\in S^2$, which solves the initial value problem~\eqref{eq:n3linear}. 
\begin{romannum}
%\begin{enumerate}
\item {\bf Decay Estimates:}\ 
 The  solution satisfies the following decay estimates:
\begin{align}
|\beta(\Omega,t)|\ &\le\ C\ \| \beta(t=0) \|\ e^{-| \Im\lambda_\star(\epsilon) | t},\ \ \ \Omega\in S^2, \label{beta-decay-est}\\
|\Psi(x,t)|\ &\le\ 
 \begin{cases} C\ \frac{1}{|x|} e^{-\min \{\Im\lambda_\star(\epsilon), |\Im\omega_*(\epsilon)|\} (t-\epsilon(|x|-1) }\ \| \beta(t=0) \|, \, &1<|x|<1+\epsilon^{-1}t,\\
0, &|x|>1+\epsilon^{-1}t.
\end{cases}
\label{eq:Psi-decay-est}\end{align}
%
%\ednote{MIW: need to work out what the norm on $\beta(0)$ is}
%
Here, $\Im \lambda_\star(\epsilon)<0$ and $\Im \omega_*(\epsilon)<0$ are, respectively, imaginary parts of the  deformation and rigid scattering resonances, which are closest to the real axis; see  Theorems~\ref{thm:rigid-summary} and~\ref{thm:deformation-summary}.
\item {\bf Resonance Expansion of $\beta$:}
%
%\begin{romannum}
%\subitem{(a)} 
%\item
Define the resonance partial sum for the bubble surface perturbation:
\begin{align}
\beta^{(L)}(\Omega,t)\ &=\  \sum_{l=0}^L \sum_{|m|\le l} \beta_l^m(t) Y_l^m(\Omega)\nn \\&=\ 
\sum_{l=0}^{L} \sum_{|m|\le l} \beta_l^m(0) \sum_{j=1}^{l+2}\ \lambda_{l,j}  e^{-i\lambda_{l,j} t} \Res_{\lambda=\lambda_{l,j}} \left[\ \lambda^2 +
 \rlhat G_l\left(\epsilon\lambda\right)\  \right]^{-1}\ Y_l^m(\Omega) ,
\label{beta-partial}
\end{align}
where $\{\lambda_{l,j}\}$, the solutions of $\lambda^2 +
 \rlhat G_l\left(\epsilon\lambda\right)=0$,   are the deformation scattering resonances; see~\eqref{srp-en-eqn} and
 Theorem~\ref{thm:deformation-summary}. 
  The limit
$
\beta(\Omega,t)\ =\ \lim_{L\to\infty} \beta^{(L)}(\Omega,t)
$
exists and converges uniformly in $C^2(S^2)$.

\item {\bf Resonance Expansion of $\Psi$:}
%\subitem{(b)} 
%\item
Given $\beta$, defined in \emph{(ii)}, define
\begin{align}
\Psi^{(L)}(r,\Omega,t) & = \sum_{l=0}^{L} \sum_{|m|\le l} i Y_l^m(\Omega) %\left[ 
\sum_{k=1}^{l+1}   \!\int\limits_{0}^{t-\epsilon(r-1)} \!\!\!\! e^{-i \omega_{l,k}(t-s)} \partial_{s} \beta_{l}^{m}(s) \, ds    \frac{h_l^{(1)}(\epsilon \omega_{l,k} r)}{(\epsilon^2 \omega_{l,k}) \D^2 h_l^{(1)} (\epsilon \omega_{l,k})}%   \right]
,
\label{Psi-partial}
\end{align}
where $\{\omega_{l,k}\}$ denotes the set of rigid Neumann scattering resonances; see Theorem~\ref{thm:rigid-summary}.

The limit
$
\Psi(r,\Omega,t)\ =\ \lim_{L\to\infty} \Psi^{(L)}(r,\Omega,t)
$
exists and converges in $C^2$ on any compact subset $K\subset\{|\bx|>1\}$. 
%
%\end{romannum}
%\end{enumerate}
\end{romannum}
\end{theorem}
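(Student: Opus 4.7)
The plan is to reduce the coupled system to a scalar evolution for $\beta$ on $S^2$ by invoking the Neumann-to-Dirichlet map, to solve the Laplace transformed problem mode by mode, and then to invert the Laplace transform as a sum over the deformation resonances of Theorem~\ref{thm:deformation-summary}. Expanding $\beta(\Omega,t)=\sum_{l,m}\beta_l^m(t)Y_l^m(\Omega)$ and using the vanishing initial data for $\Psi$, the mode-$(l,m)$ reduction of the wave equation~\eqref{eq:lin3wave} together with the kinematic condition~\eqref{eq:lin3kinematic} gives, following the derivation behind Theorem~\ref{theorem:NtD},
\[
\widetilde{\Psi_l^m}(r,-i\omega)=\frac{\widetilde{\partial_t\beta_l^m}(-i\omega)\,h_l^{(1)}(\epsilon\omega r)}{\epsilon\omega\,\partial h_l^{(1)}(\epsilon\omega)}.
\]
Substituting $r=1$ into the Laplace transform of the dynamic boundary condition~\eqref{eq:lin3bernoulli}, and using $\Psi(r,\Omega,0)=0$ together with $G_l(z)=z\,\partial h_l^{(1)}(z)/h_l^{(1)}(z)$, yields the closed algebraic identity
\[
\widetilde{\beta_l^m}(-i\omega)=\frac{i\omega\,\beta_l^m(0)}{\omega^2+\rlhat\,G_l(\epsilon\omega)},
\]
whose denominator is exactly the deformation resonance polynomial of~\eqref{srp-en-eqn-A}.

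Inverting the Laplace transform from a Bromwich contour $\Im\omega=\mu>0$, I would deform onto a rectangle that descends past the real axis and encloses the $l+2$ deformation resonances, which by Theorem~\ref{thm:deformation-summary}(ii)-(iii) lie in the lower half plane and satisfy $|\lambda_{l,j}|=O(l)$. The two vertical edges are handled exactly as in the proof of Proposition~\ref{proposition:Ulm1}, using that $G_l(\epsilon\omega)^{-1}=O(|\omega|^{-1})$ as $|\omega|\to\infty$ (a consequence of the polynomial-times-exponential structure of $h_l^{(1)}$ recorded in Proposition~\ref{proposition:hankrat-bound}); the bottom edge pushed to $-i\infty$ contributes nothing since the forcing here is the time-independent initial datum, so the integrand decays polynomially in $|\omega|$ while $e^{-i\omega t}$ decays exponentially for $\Im\omega<0$. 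Collecting residues produces the finite sum~\eqref{beta-partial} for each $\beta_l^m(t)$, and the uniform lower bound $|\Im\lambda_{l,j}(\epsilon)|\geq|\Im\lambda_\star(\epsilon)|>0$ from Theorem~\ref{thm:deformation-summary}(ii) furnishes the mode-wise decay factor $e^{-|\Im\lambda_\star(\epsilon)|t}$.

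The main obstacle is the $L\to\infty$ convergence in $C^2(S^2)$ required by~\eqref{beta-partial} and the pointwise estimate~\eqref{beta-decay-est}. This demands polynomial-in-$l$ bounds on the residues $\lambda_{l,j}\,\Res_{\lambda=\lambda_{l,j}}\!\bigl[\lambda^2+\rlhat G_l(\epsilon\lambda)\bigr]^{-1}$, analogous to Proposition~\ref{proposition:tokitish} for the rigid problem and obtained from the uniform asymptotic expansions of $h_l^{(1)}$ at large order. Combining such a residue bound with the Calder\'on--Zygmund estimate $|\partial_\theta^a\partial_\phi^b Y_l^m|=O(l^{1/2+|a|+|b|})$ from Proposition~\ref{proposition:Ylmbounds}, and summing the $(2l+1)$-fold multiplicity in $m$, I expect the weight $(1+l)^{2+1/6}$ imposed on $\beta(0)$ to exactly absorb the combined $l$-growth: the integer exponent $2$ comes from $C^2$ smoothness plus the spherical harmonic bounds, while the fractional $1/6$ originates from Airy transition-point asymptotics of $h_l^{(1)}$ near $|z|\sim l$, the same mechanism that produces the $l^{1/3}$ bound in Theorem~\ref{thm:rigid-summary}(iii). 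This delivers absolute uniform convergence of $\beta^{(L)}\to\beta$ in $C^2(S^2)$ together with~\eqref{beta-decay-est}.

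Finally, for part (iii) and the estimate~\eqref{eq:Psi-decay-est} I would feed the constructed $\beta$ back into Theorem~\ref{theorem:NtD} with forcing $f=\beta$. By the convergence just established, for any $\eta\in(0,|\Im\lambda_\star(\epsilon)|)$ the norm $[\partial_t\beta]_{\alpha,\eta}$ is finite; Theorem~\ref{theorem:NtD}(ii) then yields the partial sum~\eqref{Psi-partial} in terms of the rigid Neumann resonances $\{\omega_{l,k}\}$, its $C^2$ convergence on compact subsets of $\{|\bx|>1\}$, and the pointwise decay rate $\min\{|\Im\lambda_\star(\epsilon)|,|\Im\omega_*(\epsilon)|\}$. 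Uniqueness is a byproduct: the Laplace transform in each mode is uniquely determined by the algebraic identity above, so any two solutions with identical data have identical Laplace transforms and therefore coincide.
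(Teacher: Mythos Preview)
Your proposal follows essentially the same route as the paper: derive the closed Laplace-domain identity for $\widetilde{\beta_l^m}$, deform the Bromwich contour past the $l+2$ deformation resonances, invoke a residue bound (the paper states this as Proposition~\ref{proposition:tokitish2}, giving $\Res_{\lambda=\lambda_{l,j}}\lambda/(\lambda^2+\rlhat G_l)=O(l^{-4/3})$), combine with Proposition~\ref{proposition:Ylmbounds} to justify $C^2$ convergence under the $(1+l)^{2+1/6}$ weight, and then feed $\beta$ into Theorem~\ref{theorem:NtD} for $\Psi$. One small point to tighten: on the bottom edge $\Im\lambda=-\gamma$ the integrand $i\lambda e^{-i\lambda t}/(\lambda^2+\rlhat G_l(\epsilon\lambda))$ is only $O(1/\lambda)$ in $|\Re\lambda|$, which is not absolutely integrable; the paper handles this by one integration by parts in $\lambda$ (gaining a factor $1/t$ and an extra $1/\lambda$) before sending $\gamma\to\infty$, rather than by the bare polynomial-decay argument you sketch.
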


\emph{Proof of Theorem~\ref{thm:linsol}}.
 We apply the results of Section~\ref {sec:extneumann} with 
$u(r,\Omega,t)=\Psi(r,\Omega,t)$, $f(\Omega,t)=\beta(\Omega,t)$
 and $c=\epsilon^{-1}$, with  $\D_r\Psi(1,\Omega,t)=\D_t\beta(\Omega,t)$ as Neumann data,  coming from the linearized kinematic boundary condition and obtain:
\begin{align}
\Psi(r,\Omega,t) &= \sum_{l=0,\ l\ne1}^{\infty} \sum_{|m|\le l} \Psi_l^m(r,t) Y_l^m(\Omega), \label{eq:usum1}\\
\Psi_l^m(\Omega,r,t) &= \frac{1}{2\pi i} \int_{\mu-i\infty}^{\mu+i\infty} e^{pt} \widetilde{\D_t\beta_l^m}(p) \frac{h_l^{(1)}(i\epsilon p r)}{(i\epsilon p)\ \D h_l^{(1)}(i\epsilon)} \,dp,\ \ l=0,2,3,\dots,\ \ |m|\le l\label{Psi-lm}
\end{align} 
Setting $\omega=ip$ in~\eqref{Psi-lm} and applying Proposition~\ref{Ulm2} we obtain
\begin{align}
\Psi_l^m(\Omega,r,t) &= \frac{1}{2\pi} \int_{-\infty+i\mu}^{+\infty+i\mu} e^{-i\omega t}\ \int_0^{t-\epsilon(r-1)}\ e^{i\omega s}\ \D_s\beta_l^m(s)\ ds\
 \frac{1}{G_l(\epsilon\omega)}\ d\omega\ .\label{Psi-lm1}
 \end{align}

Our goal is to obtain a closed equation for $\beta$. This we obtain by substitution of~\eqref{eq:usum1},~\eqref{Psi-lm1} into the linearized dynamic boundary condition~\eqref{eq:lin3bernoulli}. 
Evaluating~\eqref{Psi-lm1} at $r=1$ and expressing it as a time convolution, we get, for each $l=0,2,3,\dots$ and $|m|\le l$,
\begin{equation}
\D_t\Psi_l^m(1,\Omega,t) =
\D_t \int_0^t\ \left({\cal L}^{-1} \frac{1}{G_l(\epsilon\star)}\right)(t-s)\ \D_s\beta_l^m(s)\ ds,
\label{LHS-bnlli}\end{equation}
where ${\cal L}^{-1}$ denotes the inverse Laplace transform.
Now taking the Laplace transform of equation~\eqref{LHS-bnlli} we obtain for the Laplace transform of the linearized dynamic boundary condition~\eqref{eq:lin3bernoulli}:
\begin{equation}
\frac{1}{G_l(i\epsilon p)}\left[ p^2\widetilde{\beta_l^m}(p)-p\beta_l^m(0) \right] 
  = \left[ 3\gamma \left( \tfrac{\Ca}{2} + \tfrac{2}{\We} \right)  \delta_{l0}
   + \tfrac{1}{\We}(l+2)(l-1) \right]\ \widetilde{\beta_l^m}(p),
\label{eq:lm-mode-kinematic}
\end{equation}
for $l=0,2,3,\dots$, and $|m|\le l$.
In obtaining~\eqref{eq:lm-mode-kinematic}, we have used  well-known relations for the Laplace transform of a time-derivative and a time-convolution, and that $-\Delta_SY_l^m=l(l+1)Y_l^m$.\ 
Next, solving~\eqref{eq:lm-mode-kinematic}  for $\widetilde{\beta_l^m}(p)$ yields 
\begin{align}
\widetilde{\beta_l^m}(p) \ &=\ \beta_l^m(0) \ p\  \left(p^2-\rlhat G_l(i\epsilon p)\right)^{-1}, 
 \label{betalmp}
\end{align}
where  $\rlhat = \tfrac{1}{\We}(l+2)(l-1)+ 3\gamma \left( \tfrac{\Ca}{2} + \tfrac{2}{\We} \right) \delta_{l0}$.
Inverting the Laplace transform and changing variables to $\lambda=ip$, we have
\begin{equation}
\beta_l^m(t)\ =\ 
\beta_l^m(0)\ \frac{1}{2\pi}
\int_{-\infty+i\mu}^{+\infty+i\mu} e^{-i\lambda t} \frac{ i\lambda  }{ \lambda^2+\rlhat G_l(\epsilon\lambda)}\ d\lambda,\ \ \ \mu>0.
\label{beta-lmt}
\end{equation}
Thus,
\beq  \label{eq:betauhp}
\beta(\Omega,t) = \sum_{l=0}^{\infty} \sum_{|m|\le l} \beta_l^m(0)  \frac{1 }{2\pi} \int_{-\infty+i\mu}^{+\infty+i\mu} e^{-i\lambda t} \frac{i\lambda}{\lambda^{2} + \rlhat G_{l}(\epsilon \lambda)} \, d\lambda\  Y_{l}^{m}(\Omega),\ \ \mu>0.
\eeq

\noindent We will show  
 (1) each term in the series~\eqref{eq:betauhp}
 can be expressed as a finite sum of residues at poles in the lower half $\lambda$-plane and 
  (2) under suitable regularity hypotheses on $\beta(\Omega,0)$, the series converges uniformly in $C^2(S_\Omega^2\times \R_t^+)$.

\emph {\it Evaluation of $\beta_l^m(t)$, in terms of residues:}
  The poles of the integrand of~\eqref{beta-lmt} are precisely the {\it deformation resonances}, described in Theorem~\ref{thm:deformation-summary}. In particular, there are $l+2$ poles, zeros of $\lambda^2+\rlhat G_l(\epsilon\lambda)$.
   
Choose $\Gamma^{M,\mu,\gamma}=\Gamma_1+\Gamma_2+\Gamma_3+\Gamma_4$   
  a rectangular contour of the  type used in the proof of Proposition~\ref{proposition:Ulm1}, with $M, \gamma, \mu$  so that $\Gamma^{M,\mu,\gamma}$ encircles  these $l+2$ poles in the complex $\lambda$-plane. By the residue theorem
 \begin{equation}
 \sum_{j=1}^{l+2} i\lambda_{l,j}e^{-i\lambda_{l,j} t} 
  \Res_{\lambda=\lambda_{l,j}} \left[ \lambda^2 +
 \rlhat G_l\left(\epsilon\lambda\right)  \right]^{-1} = -\frac{1}{2\pi i} \int_{\Gamma^{M,\mu,\gamma}}
 \frac{ i\lambda  \; e^{-i\lambda t}  }{ \lambda^2+\rlhat G_l(\epsilon\lambda)}\ d\lambda.
 \label{res-thm-app}
 \end{equation}
 Note that 
 \begin{equation}
\beta_l^m(t) = \beta_l^m(0)\ \frac{1}{2\pi}\ \lim_{M\to\infty} \int_{\Gamma^M_1}\ \dotsi\ d\lambda,
\end{equation}
where $\ \dotsi$ denotes the integrand in~\eqref{res-thm-app}.
 Thus, we would have
 \begin{equation}
 \beta_l^m(t) = \sum_{j=1}^{l+2} \lambda_{l,j} e^{-i\lambda_{l,j} t} 
  \Res_{\lambda=\lambda_{l,j}} \left[\ \lambda^2 +
 \rlhat G_l\left(\epsilon\lambda\right)\  \right]^{-1}\ ,
 \label{beta-lm-res-sum}
 \end{equation}
 as well as the  expansion~\eqref{beta-partial}, if we can prove
 $
 \int_{\Gamma_{2,4}}\dotsi d\lambda\to0$ as $M\to\infty$, and $\int_{\Gamma_3}\dotsi d\lambda\to0$ as $\gamma\to\infty.
$
 
To prove $\int_{\Gamma_{2,4}}\dotsi d\lambda\to0$ as  $M\to\infty$ we begin by noting
 (Appendix~\ref {app:polyhank})  the identity
\beq
G_l(\epsilon\lambda) =  -(l+1)+i\epsilon\lambda + \frac{\epsilon \lambda p_{l}'(\epsilon\lambda)}{p_{l}(\epsilon\lambda)} .
\eeq
Thus $G_l(\epsilon\lambda)$ to be asymptotically linear in $\epsilon\lambda$. Thus  $M$ can be chosen large enough so that
\beq
\frac{1}{\abs{\lambda^{2}+\rlhat G_l(\epsilon\lambda)}} \le \frac{C_{l}}{\abs{\lambda}^{2}}, \quad \text{for } \Re \lambda>M .
\eeq
Therefore, we have
\begin{equation}
\bigabs {\int_{\Gamma_{2}} e^{-i\lambda t} \frac{i \lambda}{\lambda^{2} + \rlhat G_l(\epsilon \lambda)} \, d\lambda } \le \int_{\Gamma_{2}} \abs{e^{-i\lambda t}} \frac{C_{l}\abs{\lambda}}{\abs{\lambda}^{2}} \, d\lambda 
\le C_{l} (\mu+\gamma) e^{\mu t} \frac{1}{M} \to 0 \,\text{ as }\, M\to\infty.\nn \end{equation}
A similar estimate holds  for the integral along $\Gamma_{4}$.\

We next bound the integral: $\lim_{M\to\infty}\int_{\Gamma_3}\dots\ d\lambda=\int_{\Im\lambda=-\gamma}\dots \ d\lambda$ as $\gamma\to\infty$.  
For $\gamma=\Im\lambda$ sufficiently large,
\begin{align}
\left( \lambda + \frac{\rlhat G_l(\epsilon\lambda)}{\lambda } \right) \ &=\ \left( \lambda + \frac{\rlhat}{\lambda} \left[ -(l+1)+i\epsilon\lambda + \frac{\epsilon \lambda p_{l}'(\epsilon\lambda)}{p_{l}(\epsilon\lambda)} \right] \right)^{-1}\\ 
\ &=\ \frac{ 1}{\lambda } \frac{ 1}{1+ \epsilon \rlhat \cO\bigl(\lambda^{-1}\bigr) } \ =\ \frac{ 1}{\lambda } \left[ 1 + \epsilon \rlhat \cO\bigl(\lambda^{-1}\bigr) \right] . 
\nn\end{align}
Therefore,
$
\D_\lambda \left( \lambda + \frac{\rlhat G_{l}(\epsilon\lambda)}{\lambda } \right)^{-1}  =\ -\lambda^{-2} + \cO\bigl(\lambda^{-3}\bigr). 
$
Using these asymptotics and integration by parts we have
\begin{equation}
\begin{split}
&\bigabs{\int_{-M-i\gamma}^{M-i\gamma} e^{-i\lambda t} \frac{i \lambda}{\lambda^{2} + \rlhat G_{l}(\epsilon \lambda)} \, d\lambda }\\ 
 &\quad\qquad\le \frac{ 1}{t } \bigabs {\int_{-M-i\gamma}^{M-i\gamma} e^{-i\lambda t}  \left( -\frac{ 1}{\lambda^{2} } + \cO\bigl(\lambda^{-3}\bigr) \right) \, d\lambda } + e^{-\gamma t} \bigabs{\frac{ 1}{\lambda } \left.\left( 1 + \epsilon \rlhat \cO\bigl(\lambda^{-1}\bigr) \right) \right|_{-M-i\gamma}^{M-i\gamma}  }\\ 
&\quad\qquad\le \frac{ n_l e^{-\gamma t}}{t } \int_{-M-i\gamma}^{M-i\gamma} \frac{ 1}{\abs{\lambda}^{2} } \, d\lambda + \cO\bigl(M^{-1}\bigr)\ \le \frac{ \kappa_l e^{-\gamma t}}{t }, \quad t> 0.\label{Mgamma}\raisetag{-88pt}
\end{split}
\end{equation}
Recall that 
\begin{equation}
\int_{\Im\lambda=-\gamma}\dotsi\ d\lambda = 
\lim_{M\to\infty} \int_{-M-i\alpha}^{M-i\alpha} e^{-i\lambda t} \frac{i \lambda}{\lambda^{2} + \rlhat G_{l}(\epsilon \lambda)} \, d\lambda\ .
\nn\end{equation}
Thus, passing to the limit in~\eqref{Mgamma}, first  $M\to\infty$
 and then  $\gamma\to\infty$, we obtain that the contribution from  the contour $\Gamma_3$ can be made arbitrarily small. This proves~\eqref{beta-lm-res-sum}, the expression for  $\beta_l^m(t)$ as a sum over residues. Furthermore, multiplication of expression~\eqref{beta-lm-res-sum} by $Y_l^m(\Omega)$ and summing over $l$ from $0$ up to $L$ yields the expression for $\beta^{(L)}(\Omega,t)$, the resonance expansion partial sum for the bubble surface perturbation, $\beta(\Omega,t)$.

We now discuss the convergence of $\beta^{(L)}(\Omega,t)$ as $L\to\infty$. Let $ N>L$ and consider the difference:
\begin{equation}
\beta^{(N)}(\Omega,t)-\beta^{(L)}(\Omega,t)\ =\ 
 \sum_{l=L}^N\sum_{|m|\le l} \beta_l^m(0) \sum_{j=1}^{l+2} e^{-i\lambda_{l,j} t} \Res_{\lambda=\lambda_{l,j}} \frac{\lambda}{  \lambda^2 +
 \rlhat G_l\left(\epsilon\lambda\right) }\ Y_l^m(\Omega).
 \label{beta-diff}\end{equation}
 We require the following bound, proved in Appendix~\ref{appendix:resbound}:

\begin{proposition}\label{proposition:tokitish2}
For $l$ sufficiently large,
\beq
\Res_{\lambda=\lambda_{l}^{j}} \frac{\lambda}{\lambda^{2}+\rlhat G_l(\epsilon\lambda)}  = \bigO\Bigl(l^{-4/3}\Bigr).
\eeq
\end{proposition}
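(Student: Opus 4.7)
Since $\lambda_{l,j}$ is (generically) a simple zero of $H_l(\lambda) \equiv \lambda^2 + \rlhat\, G_l(\epsilon\lambda)$, the residue is given by
\begin{equation*}
\Res_{\lambda=\lambda_{l,j}} \frac{\lambda}{\lambda^2+\rlhat G_l(\epsilon\lambda)} \ =\  \frac{\lambda_{l,j}}{H_l'(\lambda_{l,j})} \ =\ \frac{\lambda_{l,j}}{2\lambda_{l,j}+\epsilon\,\rlhat\, G_l'(\epsilon\lambda_{l,j})}.
\end{equation*}
The plan is to combine three ingredients: (i) the scaling $\rlhat = \frac{1}{\We}(l+2)(l-1) \sim l^2/\We$; (ii) the resonance-location estimate $|\lambda_{l,j}| = \cO(l)$ from Theorem~\ref{thm:deformation-summary}(iii); and (iii) a lower bound of the form $|G_l'(\epsilon\lambda_{l,j})|\gtrsim l^{1/3}$, which I explain below. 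Granted these, the $\rlhat G_l'$ term dominates $2\lambda_{l,j}$ for $l\gg 1$, so
\begin{equation*}
\left|\Res_{\lambda=\lambda_{l,j}} \tfrac{\lambda}{\lambda^2+\rlhat G_l(\epsilon\lambda)}\right|
\ \le\ \frac{C\,l}{\,|\epsilon\,\rlhat\, G_l'(\epsilon\lambda_{l,j})|\,}
\ \le\ \frac{C\,l}{\,l^2\cdot l^{1/3}\,}\ =\ \cO\bigl(l^{-4/3}\bigr),
\end{equation*}
which is the claimed bound.

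\paragraph{The key bound $|G_l'(\epsilon\lambda_{l,j})|\gtrsim l^{1/3}$.}
First I would use the resonance equation $G_l(\epsilon\lambda_{l,j}) = -\lambda_{l,j}^2/\rlhat$ together with the scalings of $\lambda_{l,j}$ and $\rlhat$ to deduce that $G_l(\epsilon\lambda_{l,j}) = \cO(1)$, which forces $\epsilon\lambda_{l,j}$ to lie in the transition region $|\epsilon\lambda|\asymp l$ of the Hankel function $h_l^{(1)}$. Indeed, for $|z|\ll l$ one has $G_l(z)\to -(l+1)$ by~\eqref{hankrat-lim}, while for $|z|\gg l$ one has $G_l(z)\sim iz$ by the large-argument asymptotics. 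The locus where $G_l$ is of moderate size is thus the Airy transition regime $\epsilon\lambda = l + \cO(l^{1/3})$. In this regime, Olver's uniform asymptotic expansions for $h_l^{(1)}$ and $h_l^{(1)\prime}$ (cf.\ the estimates used to prove Proposition~\ref{proposition:tokitish} in Appendix~\ref{appendix:resbound}) express $G_l$ as a ratio of Airy functions with a scaling variable $\zeta$ related to $z$ by $z = l + l^{1/3}\zeta + \cdots$. A short calculation with the chain rule and with the non-vanishing of the Airy-function derivatives at the transition points (i.e., at the resonance) gives $dG_l/dz\big|_{z=\epsilon\lambda_{l,j}} = \cO(l^{1/3})$ from above and, crucially, from below, uniformly in the resonance index $j\in\{1,\dots,l+2\}$.

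\paragraph{Main obstacle and structure of execution.}
The hard part is the uniform lower bound on $|G_l'|$: it requires that the resonance frequencies $\epsilon\lambda_{l,j}$ avoid zeros of the derivative factor appearing in the Airy asymptotics, and that all $l+2$ of them contribute the stated scaling. The approach I envision mirrors the Olver/Tokita argument already invoked for Proposition~\ref{proposition:tokitish}: localise each $\lambda_{l,j}$ in the complex plane via the implicit function theorem applied to $\lambda^2+\rlhat G_l(\epsilon\lambda)=0$, using the leading Airy-type profile of $G_l$ in the transition region; then differentiate this asymptotic profile to extract the $l^{1/3}$ scaling. Once the lower bound on $|G_l'(\epsilon\lambda_{l,j})|$ is in hand, the final estimate on the residue is immediate from the displayed identity above. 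I would therefore organise the proof as: (a) residue identity via simple-zero calculation, (b) asymptotic location of $\lambda_{l,j}$ and verification that it lies in the Airy transition region, (c) derivative lower bound $|G_l'|\gtrsim l^{1/3}$ by uniform Hankel asymptotics, (d) arithmetic yielding $\cO(l^{-4/3})$.
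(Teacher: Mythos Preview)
Your approach is correct and matches the paper's: both compute the residue as $\lambda_{l,j}/H_l'(\lambda_{l,j})$ via the simple-zero formula and both extract the needed lower bound on the derivative from Olver's Airy-type transition asymptotics for Hankel functions, using the resonance locations supplied by Theorem~\ref{thm:big-l}. The paper's execution differs only tactically: rather than differentiating the asymptotic expansion for $G_l$ directly, it invokes the Riccati identity $u'=-1+l(l+1)/z^2-(2/z)u-u^2$ for $u=h_l^{(1)\prime}/h_l^{(1)}$ (a consequence of the spherical Bessel ODE) to rewrite the derivative algebraically in terms of $H_\nu^{(1)\prime}/H_\nu^{(1)}$, and it splits the resonances by the Airy-prime zero index into the cases $s=\cO(1)$ and $s\sim l/2$, the former yielding the stated $\cO(l^{-4/3})$ and the latter the sharper $\cO(l^{-2})$.
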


To bound the difference in~\eqref{beta-diff}  we use Proposition~\ref{proposition:tokitish2} and Theorem~\ref{thm:deformation-summary} characterizing and estimating the deformation resonances, $\lambda_{l,j}$ and the pointwise bounds on $Y_l^m(\Omega)$ of Proposition~\ref{proposition:Ylmbounds}. Let $\D_\Omega^\alpha$ denote any differential operator of order $|\alpha|$, acting tangentially to $S^2$, {\it i.e.} $\D_\Omega^\alpha=\D_\theta^{\alpha_1}\D_\phi^{\alpha_2},\ |\alpha|=\alpha_1+\alpha_2$. Then, 
\begin{align}
\left| \D_\Omega^\alpha\left(\beta^{(N)}(\Omega,t)-\beta^{(L)}(\Omega,t)\right) \right|
 &\le C\ e^{-t\ |\Im\lambda_\star(\epsilon)|}\ \sum_{l=L}^N\sum_{|m|\le l} |\beta_l^m(0)|\ \sum_{j=1}^{l+2} (1+l)^{-{4\over3}}\ l^{|\alpha|+{1\over2}}\nn\\
 & \le\ C\ e^{-t\ |\Im\lambda_\star(\epsilon)|}\  \sum_{l=L}^N\sum_{|m|\le l} (1+ l)^{{1\over6}+|\alpha|}\ |\beta_l^m(0)|~.
 \end{align}
 
 Thus, if \ 
$
 \sum_{l=L}^N\sum_{|m|\le l} (1+ l)^{2+{1\over6}}\ |\beta_l^m(0)| < \infty
 $, 
 we have that $\beta^{(L)}(\Omega,t)$ and derivatives in $\Omega$ and $t$ up to order two, converge uniformly in $ S_\Omega^2\times\R_t^+$.  Moreover, the $\lim_{L\to\infty}\beta(\Omega,t)$ is in $C^2(S_\Omega^2\times\R_t^+)$ and satisfies the decay estimate
  \begin{equation}
 \left|\ \beta(\Omega,t)\ \right|\ \le\ 
  \sum_{l=L}^N\sum_{|m|\le l} (1+ l)^{2+{1\over6}}\ |\beta_l^m(0)|
 \ \cdot\  e^{-\left| \Im\lambda_\star \right| t }\ .
 \label{beta-decay}
 \end{equation}

 Now apply our result on the Neumann to Dirichlet map, Theorem~\ref{theorem:NtD} , to conclude 
  for $|x|>1,\ \ t>0$ that 
\begin{equation}
\begin{split}
\abs{\Psi(r,\Omega,t)} &\le   C\ e^{-\min \{\abs{\Im\lambda_*(\epsilon)},\, \abs{\Im\omega_*(\epsilon)}\} t}\  [\D_t\beta]_{\alpha,\abs{\Im\lambda_*(\epsilon)}}\\ 
&\le C  e^{-\min \{|\Im\lambda_*(\epsilon)|\} t}\ \sum_{l=L}^N\sum_{|m|\le l} (1+ l)^{2+{1\over6}}\ |\beta_l^m(0)|~.
\label{eq:decay-est}
\end{split}
\end{equation}
The second inequality in~\eqref{eq:decay-est}, follows since
 $|\Im\lambda_*(\epsilon)| <  |\Im\omega_*(\epsilon)|=\cO(\epsilon^{-1})$ for $\epsilon$ sufficiently small. 
Moreover, $(\Psi(r,\Omega,t),\beta(\Omega,t))$ is a classical solution of the initial value problem.
%% end section 5 lin-bubb-dyn
%\input{scat-res-long} %6
%
%%%%-------------scat-res-long.tex------------------------------%%%
\section{Scattering Resonance Energies\ ---\ Statements of Detailed Results} \label{sec:resonances}

As discussed in Section~\ref{sec:linsol}, two families of scattering resonance energies arise in the analysis of the linearized bubble dynamics near the spherical equilibrium state:
\begin{remunerate}
\item Rigid resonance energies, $\bigset{\omega_{l,k}}$, solutions of 
$\D h_l^{(1)}(\epsilon\omega)\ =\ 0 $.
\item Deformation resonance energies, $\big\{\lambda_{l,k}\big\}$, solutions of 
\begin{equation}
\lambda^{2} + \rlhat  G_l(\epsilon\lambda) = 0,\qquad\ \  G_l(z)\ \equiv\ \frac{z\ \D h_l^{(1)}(z) }{h_l^{(1)}\left(z \right)},\ \ \rlhat\ \  {\rm is\  given\ by\
~\eqref{eq:rlhat-notation}.}
\label{eq:deformation-eqn}
\end{equation}
  \end{remunerate}

 Both classes of resonances depend on $\epsilon$. The incompressible limit, $\epsilon\to 0$, is a singular limit. Indeed our results show that for each $l=0,2,3,\dots$,
 as $\epsilon$ tends to zero, the imaginary parts of all  $l+1$ rigid 
 resonances and of $l$ of the $l+2$  deformation resonances lie near circular arcs in the lower half plane and have imaginary parts which tend to minus infinity. 
 The remaining two deformation resonances are just below and converge to the real axis as $\epsilon\to0$; see Figure~\ref{fig:resonances} of the Introduction.  The  imaginary parts are $\cO( \epsilon^{-3} \exp(\kappa\epsilon^{-2}) )$ as $\epsilon$ tends to zero.
 %%%
%
%

%%%%%%%%%%%%%%%%%%%%%%%%%%%%%%%%%%%%%
\subsection[Deformation resonances]{Deformation resonances;\ $\bigset{\lambda_{l,j}(\epsilon)}$, such that $\lambda^2 + \rlhat\ G_l(\epsilon\lambda) = 0$} \label{sec:soft}
%%%%%%%%%%%%%
%%%%%% \smallskip
${}$ %Force new line
\begin{theorem} \label{thm:def-res-general}
Fix  $\epsilon>0$ and arbitrary.
%
%
%\begin{enumerate}
\begin{romannum}
\item
There are $l+2$ solutions of 
\begin{equation}
\lambda^2 + \rlhat\ G_l(\epsilon\lambda) = 0, 
\label{lambda-eqn}\end{equation}
 denoted $\{\lambda_{l,j}(\epsilon)\}$, for $l=0,2,3,4,\dotsc$ and $j=1,\dotsc,l+2$. These are the deformation resonance energies.
\item
$\{\lambda_{l,j}(\epsilon)\}$ are symmetric about the imaginary axis and satisfy $\Im \lambda_{l,j} <0$.
\end{romannum}
%\end{enumerate}
\end{theorem}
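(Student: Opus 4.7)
My strategy is in three parts: (a) convert~\eqref{lambda-eqn} into a polynomial identity of degree $l+2$ to count solutions; (b) exploit an involution of the spherical Hankel function to get symmetry of the zero set about the imaginary axis; (c) use the conservation law of Proposition~\ref{thm:energy} together with the spherical-Bessel Wronskian to rule out roots in the closed upper half plane.

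For part~(i), I would use the polynomial representation $h_l^{(1)}(z)=z^{-l-1}p_l(z)e^{iz}$ with $\deg p_l=l$ (Appendix~\ref{app:polyhank}). Logarithmic differentiation gives
\begin{equation*}
G_l(z)=-(l+1)+iz+\frac{z\,p_l'(z)}{p_l(z)}.
\end{equation*}
Multiplying~\eqref{lambda-eqn} through by $p_l(\epsilon\lambda)$ yields a polynomial equation in $\lambda$ of degree exactly $l+2$ (its top term is proportional to $\epsilon^l\lambda^{l+2}$, coming from $\lambda^{2}\cdot p_l(\epsilon\lambda)$), so the fundamental theorem of algebra provides $l+2$ roots counted with multiplicity. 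The multiplication introduces no extraneous roots, since zeros of $p_l$ are poles of $G_l$ and hence of $\lambda^2+\rlhat\, G_l(\epsilon\lambda)$.

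For part~(ii), the symmetry about the imaginary axis follows from the identity
\begin{equation*}
G_l(-\bar z)=\overline{G_l(z)},
\end{equation*}
which I would derive from the two Hankel relations $h_l^{(1)}(-z)=(-1)^l h_l^{(2)}(z)$ (immediate from the polynomial form above) and $\overline{h_l^{(1)}(z)}=h_l^{(2)}(\bar z)$ (Schwarz reflection for the Bessel ODE, whose coefficients are real). Since $\rlhat\in\R$, complex conjugation of~\eqref{lambda-eqn} then shows that $\lambda\mapsto-\bar\lambda$ is an involution on the set of solutions. For the sign of the imaginary part I would argue in two steps. First, if $\Im\lambda>0$, the outgoing Ansatz
\begin{equation*}
\Psi(x,t)=A\,e^{-i\lambda t}h_l^{(1)}(\epsilon\lambda r)Y_l^m(\Omega),\qquad \beta(\Omega,t)=B\,e^{-i\lambda t}Y_l^m(\Omega),
\end{equation*}
with $(A,B)$ satisfying the boundary conditions at $r=1$ (whose solvability is precisely~\eqref{lambda-eqn}), has $h_l^{(1)}(\epsilon\lambda r)$ exponentially decaying in $r$, so the energy~\eqref{CofE} is finite. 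But each term of $\mathcal{E}$ on this Ansatz scales as $e^{2(\Im\lambda)t}$, which forces $\mathcal{E}=0$ by time invariance; the positive-definite representation~\eqref{CofE-pos1}, which uses $\gamma>1$ and the center-of-mass constraint (automatic for $l\ne 1$), then gives $A=B=0$, a contradiction. Second, for real $\lambda\ne 0$, the spherical-Bessel Wronskian $j_l(x)y_l'(x)-j_l'(x)y_l(x)=x^{-2}$ implies
\begin{equation*}
\Im G_l(x)=\frac{1}{x\,\left|h_l^{(1)}(x)\right|^{2}}\ne 0,
\end{equation*}
which is incompatible with $\lambda^2+\rlhat\, G_l(\epsilon\lambda)=0$ since the left-hand side would otherwise be non-real. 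The case $\lambda=0$ is excluded by $\rlhat\, G_l(0)=-(l+1)\rlhat\ne 0$ via~\eqref{hankrat-lim}.

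I expect the energy-based exclusion of $\Im\lambda>0$ to be the main obstacle: one must verify that the conservation law~\eqref{CofE} and the positive-definite form~\eqref{CofE-pos1} extend to the complex-valued time-harmonic Ansatz (with the sesquilinear $L^2(S^2)$ pairing), and that the boundary identity reducing $\int_{|x|=1}\!\bigl(\overline{\partial_t\Psi}\,\nabla\Psi\cdot\unitn+\partial_t\Psi\,\overline{\nabla\Psi}\cdot\unitn\bigr)\,dS$ to a total time derivative survives with these complex fields. Given that $\rlhat>0$ for all admissible $l$ (the $l=0$ case uses $\gamma>1$, and $l\ge 2$ uses $(l+2)(l-1)>0$), this is a careful bookkeeping step rather than a conceptual difficulty.
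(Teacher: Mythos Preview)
Your proposal is correct and follows essentially the same approach as the paper: the polynomial representation of $h_l^{(1)}$ for the root count, analytic continuation/reflection identities for the $\lambda\mapsto-\bar\lambda$ symmetry, the Bessel Wronskian to exclude real $\lambda$, and the energy conservation law to exclude $\Im\lambda>0$. Your main concern is already handled in the paper, since Proposition~\ref{thm:energy} is formulated for complex-valued solutions with the sesquilinear $L^2(S^2)$ pairing.
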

\bigskip

\begin{theorem}[Asymptotics of Rayleigh deformation resonances, $\lambda_l^\pm(\epsilon)$  for small Mach number, $\epsilon$]\label{thm:2smallep}
%\ednote{ASK: It's easy to change Part 2 to be a series in $\epsilon\We^{-1/2}$. If we write $(\sqrt{\We}\lambda)^{2} + (l+2)(l-1) G_{l}(\epsilon\We^{-1/2} \, (\sqrt{\We}\lambda))=0$ we see results apply to a Taylor series for small $\epsilon\We^{-1/2}$. A comment and writing Part 2 in a more suggestive way will suffice.}  

%\begin{enumerate}
\begin{romannum}
\item
For $l=0$, the two deformation resonances are located  in the lower half plane, a distance $\cO(\epsilon)$ from the real axis, are given by:
\beq \label{eq:eps0-res-l0}
\lambda_{0}^{\pm}(\epsilon) = \pm\sqrt{r_{0} - \frac{r_{0}^{2}}{4} \epsilon^{2}} - i \frac{r_{0}}{2} \epsilon,\qquad\  r_0=\frac{3\gamma}{2}\Ca + 2(3\gamma-1)\frac{1}{\We}\ .
\eeq
\item
Fix $L_*>2$.  There exists $\epsilon^{*}(L^{*})$ such that for $\epsilon \le \epsilon^{*}$  and $2\le l \le L^{*}$, two of the  $l+2$ deformation resonances (see Theorem~\ref{thm:def-res-general}), are located  \emph{very slightly} below the real axis and given by: $\lambda^\pm_l=\Re\lambda^\pm_l(\epsilon)+i\Im\lambda^\pm_l(\epsilon)$, where
\begin{align} 
\Re\lambda_{l}^{\pm}(\epsilon) &= \pm \sqrt{\frac{1}{\We}(l+2)(l-1)(l+1)} \left[ 1 - \frac{(l+2)(l-1)}{2(2l-1)} \left( \frac{\epsilon} {\sqrt{\We}} \right)^{2} + \cO_{l,\text{real}}\left(\epsilon^{4}\right) \right], \nonumber \\
%
%&\nonumber\\
\Im\lambda_{l}^{\pm}(\epsilon) &= - \, \frac{1}{\epsilon}\! \left[  \frac{1}{2} \left[(l+2)(l-1) \right]^{l+1} (l+1)^{l} \left[\frac{2^{l}l!}{(2l)!}\right]^{2} \!\left( \!\frac{\epsilon} {\sqrt{\We}}\! \right)^{2l+2} \!\!\!\!\!\! + \bigO_{l} \Bigl(\! \bigl[ \epsilon \We^{-1/2} \bigr]^{2l+4} \Bigr) \!
\right]\!. \label{eq:smallepsres}
\end{align}
\item In the incompressible limit,\; $\epsilon\to0^+$,\; the deformation resonance equation~\eqref{lambda-eqn}, which  has  $l+2$ roots in the lower half plane for $\epsilon>0$, reduces to the quadratic equation: 
\begin{align*}
\lambda^2 + \rlhat\ G_l(0) = \lambda^2 - 3\gamma \left( \tfrac{\Ca}{2} + \tfrac{2}{\We} \right)\, \delta_{l0}- \tfrac{1}{\We} (l+2)(l+1)(l-1)&= 0, \quad l=0,2,3,\dotsc
\end{align*}
and has two real roots:
\begin{align}
\lambda^\pm_{l}(0) = \pm\sqrt{3\gamma \left( \frac{\Ca}{2} + \frac{2}{\We} \right)\, \delta_{l0} +\frac{1}{\We} (l+2)(l+1)(l-1)}, \qquad l=0,2,3,\dotsc \label{eq:eps0-res}
\end{align}
which are \emph{real} frequencies. \ The corresponding solutions of the linearized time-dependent perturbation equations:
\begin{equation}
\Psi_{l}^{\pm} = A^\pm_{l}\ e^{-i\lambda^{\pm}_{l}(0)\, t}\ Y_l^m(\Omega)\ r^{-l-1}, \  \ \ 
\beta_{l}^{\pm} = B^\pm_{l}\ e^{-i\lambda^{\pm}_{l}(0)\, t}\ Y_l^m(\Omega),\ \ \ l=0,2,\dots \nn 
\end{equation}
are {\it undamped} and oscillatory solutions.
\end{romannum}
%\end{enumerate}
\end{theorem}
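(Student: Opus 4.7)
The three parts require different arguments.

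\textbf{Parts~(i) and~(iii) (direct computations).} For~(i), the explicit formula $h_0^{(1)}(z)=-ie^{iz}/z$ yields $G_0(z)=-1+iz$, so the resonance equation reduces to the quadratic $\lambda^{2}+ir_0\epsilon\lambda-r_0=0$, whose two roots read off from the quadratic formula are exactly~\eqref{eq:eps0-res-l0}. For~(iii), the small-$z$ asymptotic $h_l^{(1)}(z)\sim -i(2l-1)!!\,z^{-(l+1)}$ gives $G_l(0)=-(l+1)$, so the $\epsilon\downarrow 0$ limit of~\eqref{lambda-eqn} is $\lambda^{2}=(l+1)\rlhat$, with the two real roots~\eqref{eq:eps0-res}. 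Substitution back into the linearized system~\eqref{eq:n3linear} with $\epsilon=0$ verifies that the associated $(\Psi_l^\pm,\beta_l^\pm)$ are genuine undamped time-harmonic solutions.

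\textbf{Part~(ii).} I would split this into two stages. \emph{Stage~1} establishes the delicate expansion of $G_l$ at the origin (Proposition~\ref{thm:imagterm}). Using the decomposition $h_l^{(1)}=j_l+iy_l$ together with the Wronskian identity $j_l y_l'-j_l' y_l=1/z^2$, one obtains, for real $z$,
\begin{equation*}
\Re G_l(z)=\tfrac{z}{2}\tfrac{d}{dz}\log\bigl(j_l^{2}+y_l^{2}\bigr),\qquad
\Im G_l(z)=\frac{1}{z\bigl(j_l^{2}+y_l^{2}\bigr)}.
\end{equation*}
Combined with the small-$z$ series $j_l(z)=z^l/(2l+1)!!\cdot(1+O(z^2))$ and $y_l(z)=-(2l-1)!!\,z^{-l-1}\bigl(1+\tfrac{z^2}{2(2l-1)}+O(z^4)\bigr)$, and the identity $[(2l-1)!!]^{-1}=2^l l!/(2l)!$, this yields
\begin{equation*}
\Re G_l(z)=-(l+1)+\tfrac{z^{2}}{2l-1}+O(z^4),\qquad
\Im G_l(z)=\Bigl[\tfrac{2^l l!}{(2l)!}\Bigr]^{2}z^{2l+1}\bigl(1+O(z^2)\bigr).
\end{equation*}
The essential feature is that $\Im G_l$ is lacunary, carrying no contributions of orders $z,z^3,\dots,z^{2l-1}$. \emph{Stage~2} is a standard implicit-function perturbation around $\lambda_0=\pm\sqrt{(l+1)\rlhat}$. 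Since $\partial_\lambda[\lambda^{2}+\rlhat\, G_l(\epsilon\lambda)]\big|_{(\lambda_0,0)}=2\lambda_0\ne 0$, for each fixed $l\ge 2$ and sufficiently small $\epsilon$ there is a unique analytic branch $\lambda=\lambda_l^\pm(\epsilon)$ with $\lambda_l^\pm(0)=\lambda_0$. Writing $\lambda=\lambda_0+\delta$, substituting, and matching real and imaginary parts gives
\begin{equation*}
\Re\delta=-\tfrac{\rlhat}{2\lambda_0}\cdot\tfrac{(\epsilon\lambda_0)^{2}}{2l-1}+O(\epsilon^4),\qquad
\Im\delta=-\tfrac{\rlhat}{2\lambda_0}\Bigl[\tfrac{2^l l!}{(2l)!}\Bigr]^{2}(\epsilon\lambda_0)^{2l+1}+O(\epsilon^{2l+3}).
\end{equation*}
Substituting $\rlhat=(l+2)(l-1)/\We$ and $\lambda_0^{2}=(l+2)(l-1)(l+1)/\We$, the prefactor $\rlhat\,\lambda_0^{2l}$ becomes $[(l+2)(l-1)]^{l+1}(l+1)^l/\We^{l+1}$, and pulling out a factor $\epsilon^{-1}$ from $\epsilon^{2l+1}$ recovers the asymptotics~\eqref{eq:smallepsres} exactly. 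The upper threshold $L^*$ enters only through the uniformity of the $\epsilon$-neighborhood on which the implicit function theorem applies.

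\textbf{Main obstacle.} The principal difficulty is Stage~1: proving that $\Im G_l$ vanishes to order $z^{2l}$ at the origin and pinning down the coefficient $[2^l l!/(2l)!]^2$. Direct term-by-term expansion of $z\,p_l'(z)/p_l(z)$ via $h_l^{(1)}(z)=p_l(z)e^{iz}/z^{l+1}$ is unwieldy because the coefficients of $p_l$ interleave real and imaginary parts through powers of $i$, and the cancellations up through order $z^{2l-1}$ are non-obvious. The Wronskian route sidesteps this: $j_l^{2}+y_l^{2}$ is real-analytic with leading singularity $[(2l-1)!!]^{2}z^{-2l-2}$, so its reciprocal begins at $[(2l-1)!!]^{-2}z^{2l+2}$, and the factor $1/z$ in front delivers both the lacunary $z^{2l+1}$ behavior and the correct leading coefficient; everything else in the argument is routine.
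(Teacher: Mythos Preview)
Your argument is correct, and for the key technical step (the paper's Proposition~\ref{thm:imagterm}) you take a genuinely different, cleaner route than the paper.

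\textbf{Comparison for Stage~1.} Both proofs ultimately rest on a Wronskian identity, but on different pairs of solutions. The paper attacks the odd part $G_l(z)-G_l(-z)$ through the polynomial representation $h_l^{(1)}(z)=p_l(z)e^{iz}/z^{l+1}$, arriving at
\[
G_l(z)-G_l(-z)=z\,\frac{-2iz^{2l}+R_l(z)}{p_l(z)\,p_l(-z)},
\]
and then devotes Lemma~\ref{thm:wronskian} to showing the ``subtle cancellation'' $R_l(z)\equiv 0$ by re-expressing $R_l$ in terms of $h_l^{(1)},h_l^{(2)}$ and invoking their Wronskian together with the analytic continuation formula $h_l^{(2)}(z)=(-1)^l h_l^{(1)}(-z)$. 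Your decomposition $h_l^{(1)}=j_l+iy_l$ short-circuits this: on the real axis the identity
\[
\Im G_l(z)=\frac{z\,(j_l y_l'-j_l' y_l)}{j_l^2+y_l^2}=\frac{1}{z\,(j_l^2+y_l^2)}
\]
falls out in one line from the Wronskian $W[j_l,y_l]=z^{-2}$, and the lacunary $z^{2l+1}$ behaviour with coefficient $[(2l-1)!!]^{-2}=[2^l l!/(2l)!]^2$ is then immediate from $y_l$-dominance. What the paper's approach buys is a complex-analytic identity valid off the real axis (the full odd part of $G_l$); what your approach buys is transparency---the cancellation that the paper has to verify by a page of computation is automatic, since $j_l^2+y_l^2$ is manifestly even. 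Your derivation of the $\epsilon^2$ real-part coefficient via $\Re G_l=zy_l'/y_l+O(z^{4l+2})$ and the $y_l$ series is also more explicit than the paper, which states this coefficient in the theorem but does not display its computation.

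\textbf{Parts (i), (iii) and Stage~2} match the paper's approach essentially verbatim.
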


%%%%%%%%%%%%%%%%%%%%%%%%%%%%
\begin{remark}
In Section~\ref {sec:longlivedres-thm}, we present an asymptotic analysis and numerical computations offering strong evidence for the scattering resonance energy nearest to the real axis being a distance of order
\begin{equation}
\cO\!\left(\epsilon^{-1}\ \We\,\epsilon^{-2}e^{-\kappa\,\We\, \epsilon^{-2}}\right),\ \ \kappa>0, \quad{\rm occurring\ at\ order}
 \ \ l_\star(\epsilon)=\cO\!\left(\We\,\epsilon^{-2}\right).
 \end{equation}
\end{remark}
%%%%%%%%%%%%%%%%%%%%%%%%%%%%
%\begin{remark} % ASK: Added 1/5/10
%
%Equation~\eqref{eq:smallepsres} makes explicit the leading order
%imaginary part of $\lambda^\pm_l(\epsilon)$.
% A higher order calculation yields
 %
%\begin{multline} \label{eq:imlambda-2terms}
%
%\Im\lambda_{l}^{\pm}(\epsilon) \;=\; - \, \frac{1}{\epsilon} \left[  \frac{1}{2} \left[(l+2)(l-1) \right]^{l+1} (l+1)^{l} \left[\frac{2^{l}l!}{(2l)!}\right]^{2}  \left( \frac{\epsilon} {\sqrt{\We}} \right)^{2l+2}
%\right] \\
%
%\times\,\left[1-(l+2)(l-1)\left( 1+\frac{3}{2l-1} \right)\left( \frac{\epsilon}{\sqrt{\We}} \right)^{2} + %\bigO_{l}\!\left( \left[ \epsilon \We^{-1/2} \right]^{4} \right) \right].
%
%\end{multline}
%
%which can be expected to be valid at most with $l\le L_*=o(\epsilon^{-1})$.
%\end{remark}
%%%%%%%%%%%%%%%%%%%%%%%%%%%%

Finally, the following theorem establishes that  for any fixed $\epsilon$ and  sufficiently large $l$, the deformation resonances (excluding one) are distributed near a circular arc. In particular, this implies that for any fixed $\epsilon$ scattering resonances are in the lower half plane and uniformly bounded away from the real axis.
 
\begin{theorem} \label{thm:big-l}
Fix $\epsilon>0$ and arbitrary. The $l+2$ deformation resonances are approximated as follows:
%
%
%\begin{enumerate}
\begin{romannum}
\item
There are $l+1$ resonances near zeros of $\D H_{\nu}^{(1)}(\epsilon\lambda)$; see Equation~(\ref {eq:dH-zero}).  Precisely, there exists $K_1>0$ such that for 
 $
l \ge K_1 \left(\epsilon^{-2}\We\right)^{3/(1-3q)},\ 0<q<1/3,
$ 
those in the fourth quadrant can be expressed as:
%%
%
%%%%%%%%%%%%%%%%%%%%%%%%%%%% changes by ASK
%%%%%%%%%%%%%%%%%%%%%%%%%%%%
%\begin{newpart}{Includes another Taylor term.}
\begin{multline}
\lambda_{l,s}(\epsilon) = \frac{l+1/2}{\epsilon} \biggr[1 + 2^{-1/3} e^{-2\pi i/3} (l+1/2)^{-2/3}  \bigabs{\eta_{s}'}\\ + \tfrac{3}{10} 2^{-1/3} e^{-4\pi i /3} (l+1/2)^{-4/3}\bigabs{\eta_{s}'}^{2} + \bigO\Bigl( s^{-1/3} (l+1/2)^{-2/3-q} \Bigr) \biggl],  \label{eq:l+1-res}
\end{multline}
%\end{newpart}
%%%%%%%%%%%%%%%%%%%%%%%%%%%%
%%%%%%%%%%%%%%%%%%%%%%%%%%%%
%
for $s=1,2,\dotsc, \left\lfloor (l+1)/2 \right\rfloor + 1$.
Here, $\eta_{s}'$ is the $s$th zero of $\D_{z} Ai(z)$;\ $0>\eta_{1}'>\eta_{2}'>\dotsb$~\cite{Olver:1954lr}. Note the error is uniform in $\epsilon$.
These resonances come in pairs, symmetric about the imaginary axis,  $\{\lambda_{l,s} , -\overline{\lambda_{l,s}} \}$. For even~$l$, $\lambda_{l,(l+2)/2}$ is a simple resonance  on the negative imaginary axis.
\item
The remaining deformation resonance is located on the negative imaginary axis and can be expressed, for some $K_{2}$ sufficiently large and  $l\ge K_{2} \,\epsilon^{-2} \We $, as
\beq %\label{eq:imagbigl}
\quad\lambda_{l,l+2}(\epsilon) = -i\, \frac{l+1/2}{\epsilon} \left[ \frac{\epsilon^{2}(l+1/2)}{\We} + \frac{1}{2} \frac{\We}{\epsilon^{2}(l+1/2)} + \bigO\!\left(\left[\epsilon^{2}\We^{-1}(l+1/2)\right]^{-3}\right) 
%\bigO\left((\epsilon^{2} l)^{-3}\right) %%% Old error term
\right].
\eeq
\item The set of deformation resonance energies is  uniformly bounded away from $\R$. That is, for some $l_{\star}(\epsilon), j_{\star}(\epsilon)$, 
\begin{equation}
\Im\lambda_{l,j}(\epsilon) \le\ \Im\lambda_{{l_\star},{j_\star}}(\epsilon)\ \equiv\  \Im\lambda_\star(\epsilon) < 0,\ \ \ {\rm all}\ \  l\ge0,\ |j|\le l+2~.
\end{equation}
\item Moreover, there exists $C>0$ such that as $l\to\infty$, $\lambda_{l,j}(\epsilon) = \epsilon^{-1} {\cal O}( l)$, $C\ \epsilon^{-1} l^{1/3} \le \abs{\Im\lambda_{l,j}(\epsilon)}.$
\end{romannum}
%\end{enumerate}
\end{theorem}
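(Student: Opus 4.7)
The plan is to reduce the deformation resonance equation $\lambda^{2} + \rlhat\, G_{l}(\epsilon\lambda) = 0$ to a perturbation of the rigid resonance equation $G_{l}(\epsilon\lambda)=0$ on the one hand, and to an explicit scalar equation on the negative imaginary axis on the other. Setting $z=\epsilon\lambda$ and noting $\rlhat = (l+2)(l-1)/\We \sim l^{2}/\We$ for $l\ge1$, the defining relation rewrites as
\[
G_{l}(z) \;=\; -\,\frac{z^{2}}{\epsilon^{2}\rlhat}\,, \qquad G_{l}(z) = \frac{z\,\D h_{l}^{(1)}(z)}{h_{l}^{(1)}(z)}\,.
\]
When $z$ lies in the Debye transition region $|z|\sim \nu := l+\tfrac12$, the right hand side has bounded magnitude $\cO(\We/\epsilon^{2})$, which is the regime of part (i) producing $l+1$ resonances close to the zeros of $\D h_{l}^{(1)}$. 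On the negative imaginary axis with $|z|\gg \nu$, by contrast, $G_{l}$ is unbounded and the $\lambda^2$ and $\rlhat G_l$ terms balance only for one specific large value of $|z|$, yielding the single imaginary resonance of part (ii).

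For part (i), I would transport Olver's uniform asymptotic expansions of $H_{\nu}^{(1)}(\nu t)$ and $H_{\nu}^{(1)\prime}(\nu t)$ in terms of Airy functions of the conformal variable $\zeta(t)$ to $h_{l}^{(1)}$ via $h_{l}^{(1)}(z)=\sqrt{\pi/(2z)}\,H_{\nu}^{(1)}(z)$, thereby placing the zeros $z_{\nu,s}$ of $\D h_{l}^{(1)}$ at the locations~\eqref{eq:dH-zero}. Differentiating $G_{l}$ and eliminating the second derivative by the spherical Bessel equation gives $G_{l}'(z_{\nu,s}) = -z_{\nu,s} + l(l+1)/z_{\nu,s}$; substituting the Olver expansion of $z_{\nu,s}$ into this produces, after cancellation of the leading $\nu$ terms, $G_{l}'(z_{\nu,s}) = -2\xi_s\nu^{1/3}+\cO(\nu^{-1/3})$ with $\xi_s = 2^{-1/3}|\eta_s'|e^{-2\pi i/3}$. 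A Rouché argument on a disk of radius $\sim \nu^{-1/3-q'}$ centred at $z_{\nu,s}$ then shows that the perturbation $-z^{2}/(\epsilon^{2}\rlhat)$ displaces the root by
\[
\Delta z \;\approx\; \frac{z_{\nu,s}^{2}}{\epsilon^{2}\rlhat\,G_{l}'(z_{\nu,s})} \;=\; \cO\!\left(\frac{\We}{\epsilon^{2}\,s^{2/3}\nu^{1/3}}\right),
\]
which in the $\lambda$-variable is a relative shift of order $\We\,\epsilon^{-2}s^{-2/3}\nu^{-4/3}$. Matching this to the claimed error $\cO(s^{-1/3}\nu^{-2/3-q})$ is precisely what forces the threshold $l\ge K_{1}(\epsilon^{-2}\We)^{3/(1-3q)}$. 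The pairing $\lambda\mapsto -\overline{\lambda}$ giving symmetry about the imaginary axis, and the simple fixed root on the negative imaginary axis when $l$ is even, follow from the Schwarz-type relation $h_{l}^{(1)}(-\bar z)=(-1)^{l+1}\overline{h_{l}^{(2)}(z)}$ applied to~\eqref{lambda-eqn}.

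For part (ii), I would restrict to $\lambda=-iy$, $y>0$, and use the identity from Appendix~\ref{app:polyhank},
\[
G_{l}(\epsilon\lambda) \;=\; -(l+1) + i\epsilon\lambda + \frac{\epsilon\lambda\, p_{l}'(\epsilon\lambda)}{p_{l}(\epsilon\lambda)}\,,
\]
which on the imaginary axis reduces $\lambda^{2}+\rlhat G_{l}(\epsilon\lambda)=0$ to a real scalar equation $y^{2}=\rlhat\,\widetilde{G}_{l}(\epsilon y)$. In the regime $y\gg \nu/\epsilon$ the dominant balance is $y^{2}\sim \rlhat\,\epsilon y$, producing the leading behaviour $y\sim \epsilon\nu^{2}/\We$. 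Iterating this fixed-point relation and using the large-argument expansion of $p_{l}'/p_{l}$ on the imaginary axis (equivalently, that of $K_{\nu}'/K_{\nu}$ via $H_{\nu}^{(1)}(-iy)\propto K_{\nu}(y)$) yields both the second term $\tfrac12\We/(\epsilon^{2}\nu)$ and the cubic error bound. The threshold $l\ge K_{2}\epsilon^{-2}\We$ is exactly what makes $\epsilon\nu^{2}/\We\gg \nu/\epsilon$, so that this resonance is well-separated from the transition-region family of~(i).

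Parts (iii) and (iv) then follow readily. From (i), $|\Im\lambda_{l,s}(\epsilon)|\ge C\epsilon^{-1}\nu^{1/3}$ uniformly in $s$, while from (ii), $|\Im\lambda_{l,l+2}(\epsilon)|\sim \epsilon\nu^{2}/\We$; both stay bounded away from the real axis as $l\to\infty$, and for the finitely many $l$ not covered by (i)--(ii), Theorem~\ref{thm:def-res-general}(ii) already places the resonances in the open lower half plane, so a uniform gap $|\Im\lambda_{*}(\epsilon)|>0$ exists. The growth $\lambda_{l,j}(\epsilon)=\epsilon^{-1}\cO(l)$ in (iv) is read off directly from (i) and (ii). The principal technical obstacle is the \emph{uniform-in-$s$} Olver analysis behind part (i): one needs pointwise asymptotics of $h_{l}^{(1)}$ and $\D h_{l}^{(1)}$ on small disks near each transition-region zero that are uniform both in the Airy-zero index $s$ and in the spherical harmonic index $l$, together with a quantitative Rouché estimate on a disk small enough that $G_{l}'$ is essentially constant yet large enough to contain the perturbed root. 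This is the same circle of ideas already used for the rigid resonances in Theorem~\ref{thm:rigid-summary} and carried out in Appendix~\ref{appendix:resbound} for the closely related Proposition~\ref{proposition:tokitish2}, and the work here amounts to redoing those estimates in the presence of the nontrivial perturbation $-z^{2}/(\epsilon^{2}\rlhat)$.
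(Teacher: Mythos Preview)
Your overall strategy matches the paper's: Rouch\'e around the zeros of $\D H_{\nu}^{(1)}$ via Olver's uniform Airy asymptotics for part~(i), and a dominant-balance computation on the negative imaginary axis for part~(ii), with parts~(iii)--(iv) as corollaries.

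A few tactical differences are worth recording. For part~(i) the paper does not compute $G_l'(z_{\nu,s})$ but works directly in the Airy variable $\eta=e^{2\pi i/3}\nu^{2/3}\zeta$: writing the resonance equation as $f+g=0$ with $f=\D H_\nu^{(1)}(\nu z)$ and $g=(\nu z)^{-1}\bigl((\nu z)^2/[\epsilon^2\We^{-1}(\nu^2-9/4)]-\tfrac12\bigr)H_\nu^{(1)}(\nu z)$, it bounds $|g/f|\le C\,(\We\epsilon^{-2}\nu^{-1})\cdot\nu^{1/3}\cdot|Ai/\D Ai|$ and proves $|Ai/\D Ai|\le C\nu^{1/3+q}$ on the circle $|\eta-\eta_s'|=\tfrac{\kappa}{2}s^{-1/3}\nu^{-q}$. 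The threshold $l\ge K_1(\epsilon^{-2}\We)^{3/(1-3q)}$ comes from making this product $<1$, not from matching a Newton displacement to the stated error. Your $G_l'(z_{\nu,s})=-z_{\nu,s}+l(l+1)/z_{\nu,s}\sim -2\xi_s\nu^{1/3}$ is correct, but the matching you describe would yield only $l\gtrsim(\epsilon^{-2}\We)^{3/(2-3q)}$; the stated exponent requires the Rouch\'e bound itself. Your approach via $G_l'$ can be made to work, but the threshold derivation needs to be redone through the actual Rouch\'e inequality on the chosen disk.

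For part~(ii) the paper does not pass to $K_\nu$; instead it substitutes the large-$|\eta|$ Airy asymptotics into the uniform expansion, squares, and solves the resulting quartic $z^4+N^2z^2-\alpha^2 z^2-N^2=0$ (with $N=\epsilon^2\nu/\We$, $\alpha=\epsilon^2/\We$) by dominant balance $z\sim -iN+i(\alpha-1)/(2N)+\cdots$. Your direct route through the polynomial representation $G_l(\epsilon\lambda)=-(l+1)+i\epsilon\lambda+\epsilon\lambda\,p_l'/p_l$ on the imaginary axis is equally valid and arguably cleaner.
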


We conclude this section with a proof of Theorem~\ref{thm:def-res-general}. Theorem~\ref{thm:2smallep} is proved in Section~\ref {sec:rayleigh-res-analysis}. Theorem~\ref{thm:big-l} is proved in Section~\ref {sec:def-res-pfs}. 
To prove Theorem~\ref{thm:def-res-general} we use the following rational function representation of $G_l(z)\equiv\D h_l^{(1)}(z)/h_l^{(1)}(z)$, which follows from~\eqref{eq:poly-rep-hank}:
\begin{proposition}
$G_l(z) = -(l+1) + iz + \frac{z p_{l}'(z)}{p_{l}(z)}$. 
\label{thm:Gl-poly}
\end{proposition}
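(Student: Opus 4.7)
The plan is to obtain the identity by a single logarithmic-derivative computation, using the polynomial representation of the outgoing spherical Hankel function stated in Appendix~\ref{app:polyhank} (equation~\eqref{eq:poly-rep-hank}), namely
\begin{equation*}
h_l^{(1)}(z) \;=\; z^{-l-1}\, p_l(z)\, e^{iz},
\end{equation*}
where $p_l(z)$ is a polynomial of degree $l$ with $p_l(0)\neq 0$ (so the ratio $p_l'/p_l$ is meromorphic with no pole at $0$, and $z p_l'(z)/p_l(z)$ is in fact analytic in a neighborhood of the origin and vanishes at $z=0$, giving consistency with the limit~\eqref{hankrat-lim}).

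First, I would take the logarithmic derivative of the factorization:
\begin{equation*}
\frac{d}{dz}\log h_l^{(1)}(z) \;=\; -\frac{l+1}{z} \;+\; \frac{p_l'(z)}{p_l(z)} \;+\; i.
\end{equation*}
This is valid wherever $p_l(z)\neq 0$ and $z\neq 0$, which is an open set containing every point of interest for the resonance analysis. Multiplying both sides by $z$ yields
\begin{equation*}
G_l(z)\;\equiv\;\frac{z\,\partial h_l^{(1)}(z)}{h_l^{(1)}(z)} \;=\; -(l+1) \;+\; iz \;+\; \frac{z\, p_l'(z)}{p_l(z)},
\end{equation*}
which is the claimed identity. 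By analytic continuation, the identity extends to all $z\in\mathbb{C}\setminus\{\text{zeros of }p_l\}$, and in particular to a neighborhood of $z=0$ since $p_l(0)\neq 0$.

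There is no serious obstacle here: the only non-trivial input is the already-proved polynomial factorization of $h_l^{(1)}$, so the proof reduces to a one-line algebraic manipulation. The utility of this representation in subsequent sections is twofold. First, it exhibits $G_l(\epsilon\lambda)$ as affine in $\epsilon\lambda$ plus a \emph{bounded} rational correction $\epsilon\lambda\, p_l'(\epsilon\lambda)/p_l(\epsilon\lambda)$ for $|\lambda|$ large relative to the zeros of $p_l$, which is precisely the asymptotic linearity used in Theorem~\ref{thm:linsol} to bound the resolvent $(\lambda^2+\hat{r}_l G_l(\epsilon\lambda))^{-1}$ by $C_l/|\lambda|^2$ on the horizontal contours $\Gamma_{2},\Gamma_{4}$. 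Second, by expanding $p_l'/p_l$ as a sum over the zeros of $p_l$, it reduces the behavior of $G_l$ near $z=0$ to a finite Taylor expansion with polynomial coefficients, the input needed for the Rayleigh-resonance asymptotics in Proposition~\ref{thm:imagterm} and Theorem~\ref{thm:2smallep}.
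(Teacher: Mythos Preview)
Your proof is correct and is exactly the argument the paper intends: the paper states that the proposition ``follows from~\eqref{eq:poly-rep-hank}'' without further detail, and your logarithmic-derivative computation of $h_l^{(1)}(z)=z^{-l-1}p_l(z)e^{iz}$ is precisely the one-line verification implied. The additional remarks on analyticity near $z=0$ and on the downstream uses of the identity are accurate and go slightly beyond what the paper spells out.
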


\emph{Proof of Theorem~\ref{thm:def-res-general}}.
\begin{romannum}
\item  We first observe that there $l+2$ scattering resonance energies. To see this we observe, by Proposition~\ref{thm:Gl-poly},  that $\lambda^2+\rlhat G_l(\epsilon\lambda)=0$ is equivalent to 
\beq
\lambda^{2} + \rlhat \left[-(l+1)+i\epsilon\lambda + \frac{\epsilon \lambda p_{l}'(\epsilon\lambda)}{p_{l}(\epsilon\lambda)} \right] = 0,
\label{eq:def-res-p}\eeq
which clearly has $l+2$ roots since $p_{l}(z)$ is a polynomial of degree $l$.   For $\epsilon=0$, the equation reduces to $\lambda^{2}=\rlhat (l+1)$, yielding two real frequencies, corresponding to the non-decaying Rayleigh modes of the incompressible limit problem~\cite{Lamb:1993mu}.%
\medskip

\item We now prove that if $\lambda$ is a solution of~\eqref{eq:def-res-p}, then so is $-\overline{\lambda}$. 
By~\eqref{eq:hlHnu} %and~\eqref{eq:poly-rep} %
\beq
\lambda^2\ +\ \rlhat\ G_l(\epsilon\lambda)\ =\ 0\ \ \iff\ \ 
 \lambda^{2} + \rlhat \left[-\frac{1}{2} + \frac{\epsilon\lambda\, \D H_{\nu}^{(1)}(\epsilon\lambda)}{H_{\nu}^{(1)}(\epsilon\lambda)}\right] = 0,\ \ \ \nu=l+1/2.
\label{eq:def-res-p1}\eeq
Using~\eqref{H-recur} we have 
$
\lambda^{2} + \rlhat \left[-\frac{1}{2} + \frac{\epsilon\lambda}{H_\nu^{(1)}(\epsilon\lambda)}\
 \left(\ \nu\ \frac{H_\nu^{(1)}(\epsilon\lambda)}{\epsilon\lambda} - H_{\nu+1}^{(1)}(\epsilon\lambda)\ \right)\ \right]\ =\ 0.
$

Taking the complex conjugate  and using the analytic continuation formulae
~\eqref{eq:analy1} and~\eqref{eq:analy2} we obtain
\begin{align}
0 &= \overline{\lambda}^{2} + \rlhat\! \left[-\frac{1}{2} + \epsilon\nu
 - \epsilon\overline{\lambda}
  \frac{\overline{H_{\nu+1}^{(1)}(\epsilon\lambda)} }{\overline{H_{\nu}^{(1)}(\epsilon\lambda)   }     } \right] 
  \!= \bigl(-\overline{\lambda}\bigr)^{2}\! + \rlhat \!\left[-\frac{1}{2} + \epsilon\nu
 + \epsilon (-\overline{\lambda})
  \frac{{H_{\nu+1}^{(1)}(\epsilon(-{\overline\lambda}) )} }{{H_{\nu}^{(1)}(\epsilon(-{\overline\lambda}) )   }     } \right]\!.
 \end{align}
%The latter two equalities hold by %% cited above
Thus, $-\overline{\lambda}$ satisfies~\eqref{eq:def-res-p1}
and is therefore a scattering resonance.
\medskip

We now prove that all resonances lie in the open lower half plane, $\Im\lambda<0$. We begin, following~\cite{Tokita:1972lr}, by showing that there are no resonances along the real axis.  Let $\nu=l+1/2$.
Using the definition of $H_\nu^{(1)}$ in terms of Bessel funcitons, equation~\eqref{eq:def-res-p1} for the resonances can be written
\begin{align}
0 &= \left(\frac{\lambda^{2}}{\rlhat}-\frac{1}{2} \right) \left[J_{\nu}\left(\epsilon \lambda\right) +i Y_{\nu}\left(\epsilon \lambda\right)\right] + \left(\epsilon \lambda\right) \left[\D J_{\nu}\left(\epsilon \lambda\right) +i \D Y_{\nu}\left(\epsilon \lambda\right)\right] .
\end{align}
Now assume $\lambda=x/\epsilon$, where $x$ is real and nonzero, is a resonance and we show that this leads to a contradiction.  Indeed, 
\begin{align}
0 &= \left(\frac{1}{\epsilon^{2}}\frac{s^{2}}{\rlhat}-\frac{1}{2} \right) \left[J_{\nu}(x) + i Y_{\nu}(x) \right] + x \left[J_{\nu}'(x) + i Y_{\nu}'(x)\right] \\
&= \left[\left(\frac{1}{\epsilon^{2}}\frac{x^{2}}{\rlhat}-\frac{1}{2} \right) J_{\nu}(x) + x J_{\nu}'(x)\right] + i \left[\left(\frac{1}{\epsilon^{2}}\frac{x^{2}}{\rlhat}-\frac{1}{2} \right) Y_{\nu}(x) + x Y_{\nu}'(x)\right].
\end{align}
Since $x\in\R$ both quantities in braces equal zero.  Therefore,
 we have %
\beq
\left(\frac{1}{\epsilon^{2}}\frac{x^{2}}{\rlhat}-\frac{1}{2} \right) = -\frac{x J_{\nu}'(x)}{J_{\nu}(x)} = -\frac{x Y_{\nu}'(x)}{Y_{\nu}(x)} 
\nn\eeq
implying
$
-x\ \left[\ J_{\nu}'(x) Y_{\nu}(x) -  Y_{\nu}'(x) J_{\nu}(x) \ \right] = 0. 
$
The expression in square bracket is the Wronskian of two  linearly independent solutions of Bessel's equation and satisfies the following identity~\cite{Abramowitz:1965zr}:
\beq
J_{\nu}'(x) Y_{\nu}(x) -  Y_{\nu}'(x) J_{\nu}(x) = \frac{2}{\pi x} \ne 0. \eeq
Hence, for $x\ne0$,  we have a contradiction and thus $\Im\lambda\ne0$.
\medskip

Next, we claim that there are no resonances in the upper half plane, and therefore $\Im\lambda<0$. For if $\lambda$ is a scattering resonance with 
$\Im\lambda>0$, this would give rise to 
a solution of the initial-boundary value problem for the linearized bubble perturbation system~\eqref{eq:n3linear} of the form: 
$\Psi(r,\Omega)=e^{-i\lambda t} h_l^{(1)}(\epsilon\lambda_l r),\ \ \beta(\Omega) = e^{-i\lambda t} Y_l^m(\Omega)$. 
Since $\Im\lambda>0$, this solution would  decay to zero exponentially as $r=|x|\to\infty$  ( $h^{(1)}_l(z)$ is $e^{iz}$ times a rational function of $z$), be smooth and be exponentially {\it growing} in $t$. This contradicts conservation of energy, Proposition~\ref{thm:energy}. Thus, $\Im\lambda\le0$ and by the above discussion $\Im\lambda<0$.\qquad\endproof
\end{romannum}
%%%%%%%%%%%%%%%%%%%%%%%%
%
%%% end section 6 scat-res-long
%\input{long-lived} %7
% 
%
\section{Longest-lived mode; formal asymptotics and precision numerics}\label{sec:longlivedres-thm}
From Theorems~ \ref {thm:def-res-general} to~ \ref {thm:big-l}  we know that for any finite (not necessarily small) $\epsilon$, there is a  strip:\ 
\begin{equation}
\Im \lambda^{\pm}_\star(\epsilon)< \Im\lambda <0,
\nn\end{equation}
 in which there are no scattering resonances. Moreover, the incompressible limit spectral problem, $\epsilon=0$, has eigenvalues only at the real Rayleigh frequency pairs, given in Equations~\eqref{eq:eps0-res-l0} and~\eqref{eq:eps0-res}, and none in the lower half plane.\\

\noindent {\it Question:\ \  How thin is the strip? That is, can we estimate $\Im \lambda^{\pm}_\star(\epsilon)$, for $\epsilon$ small?}\\

Figure~\ref {fig:resonances} displays the distribution of numerically computed Rayleigh resonances for a fixed $\epsilon>0$ and several values of $l$. The figure shows the resonance in the lower half plane (a pair, $\lambda^{\pm}_{\star}(\epsilon)$ by $\lambda\mapsto-\overline{\lambda}$ symmetry), which is closest to the real axis.  These resonances correspond to the slowest time-decaying term in the resonance expansion and the resulting decay rate in Theorem~\ref{theorem:NtD}.\bigskip

Let $l_{\star}(\epsilon)$ denote the mode index of the resonances nearest the real axis and $\lambda_{\star}^{\pm}(\epsilon)=\lambda_{l_{\star}(\epsilon)}^{\pm}$ the resonance itself.  
 In Equation~(\ref {eq:smallepsres}) we display the leading order behavior of the $\Im\lambda_l^\pm(\epsilon)$, for $l$ not too large, and $\epsilon$ small:
\begin{equation} \Im\lambda_l^\pm(\epsilon)\ \sim\ 
-\,i\, \mathcal{M}(l,\epsilon)\label{lam-iM}\end{equation}
where
\beq \label{eq:impart}
\mathcal{M}(l,\epsilon) =  \frac{1}{2\epsilon} \left[(l+2)(l-1) \right]^{l+1} (l+1)^{l} \left[\frac{2^{l}l!}{(2l)!}\right]^{2}  \left( \frac{\epsilon}{\sqrt{\We}} \right)^{2(l+1)} 
\eeq

\noindent We attempt to compute the $l_\star(\epsilon)$ and $\Im\lambda^{\pm}_\star(\epsilon)\sim -\mathcal{M}\left(l_\star(\epsilon),\epsilon\right)$, by minimization of $\mathcal{M}(l,\epsilon)$ over $l$, treated as a continuous variable, while keeping $\epsilon\,\We^{-1/2}$ fixed. The calculation presented below in Remark~\ref{remark:minimize} suggests the following conjecture: 

%%%%%%%%%%%%
% Notes 8/17/09
%\begin{conjecture} \label{thm:lopt}
%

\noindent {\it Longest lived mode index:}\ There exists $A,\epsilon_{*}>0$ such that for $\epsilon\le \epsilon_{*}$
\beq \label{eq:lopt}
l_{\star}(\epsilon) \approx A\, \epsilon^{-2}\, \We.
%l_{\star}(\epsilon) \approx \frac{4}{e^3} \We\ \epsilon^{-2}.
\eeq
%\end{conjecture}
%%%%%%%%%%
%\begin{conjecture} \label{thm:lamlopt} 
{\it Real part and imaginary part (``reciprocal lifetime'')
  of longest lived resonance mode:}\\ 
There exists $r_{1},i_{1},i_{2}>0$ such that $\lambda_{\star}^{\pm}(\epsilon) = \Re\lambda_{\star}^{\pm}(\epsilon)\ +\ i\, \Im \lambda_{\star}^{\pm}(\epsilon)$, where
\begin{align}
\Re \left\{\lambda_{\star}^{\pm}(\epsilon)\right\} &\approx \pm \frac{r_{1}}{\epsilon} \left(\epsilon^{-2}\We\right),\qquad\ 
%\Re \left\{\lambda_{\star}^{\pm}(\epsilon)\right\} &\approx \pm \left( \frac{ 4}{e^{3} } \right)^{3/2}\ (\We)\  \epsilon^{-3} \label{eq:real-lam-opt} \\
%
\Im \left\{\lambda_{\star}^{\pm}(\epsilon)\right\} \approx - \frac{i_{1}}{\epsilon} \left(\epsilon^{-2}\We\right) \, \exp\left( -i_{2}\, \epsilon^{-2}\, \We \right) \label{eq:im-lam-opt}
%\Im \left\{\lambda_{\star}^{\pm}(\epsilon)\right\} &\approx - \left(\frac{4}{e^{6}}\right)\ (\We)\ \epsilon^{-3}\ \exp\left( -\frac{4}{e^3}\ (\We)\ \epsilon^{-2} \right) \label{eq:im-lam-opt}
\end{align}
%
%\end{conjecture}
%
%% 
\begin{remark}\label{remark:contrast-rates}
 $\left|\Im\lambda^{\pm}_\star(\epsilon)\right|$, the distance from $\lambda^{\pm}_\star(\epsilon)$  to the real axis,  tends to zero  very rapidly (beyond all orders in $\epsilon$) as  $\epsilon\to0$. In contrast,  the radial  resonance, $\lambda_{0}(\epsilon)$, tends to the real axis only \emph{linearly} in $\epsilon$; see Theorem~\ref{thm:2smallep}.
\end{remark}

Numerical evidence supporting~\eqref{eq:lopt} and~\ref{eq:im-lam-opt} is obtained by numerically computing the resonances for fixed $\epsilon$ and various $l$, and determining the optimal mode index, $l_{\star}(\epsilon)$, and resonance $\lambda_{\star}^{numerical}(\epsilon)$.  The Mathematica function {\verb+FindRoot+} with options {\verb+AccuracyGoal -> 20+}, {\verb+PrecisionGoal -> 20+}, {\verb+WorkingPrecision -> 2000+}, and \verb+MaxIterations -> 1500+ were used to compute the resonances.
   Figure~\ref{fig:lopt-fit-0_03-0_20} shows the numerically computed optimal mode along with a numerical fit of the data. Note the expressions in Equations~(\ref {eq:lopt}) and~(\ref {eq:im-lam-opt}) are the leading terms of the numerical fit functions.
\begin{figure}[!htb] 
   \centering
   	\includegraphics[width=\textwidth]{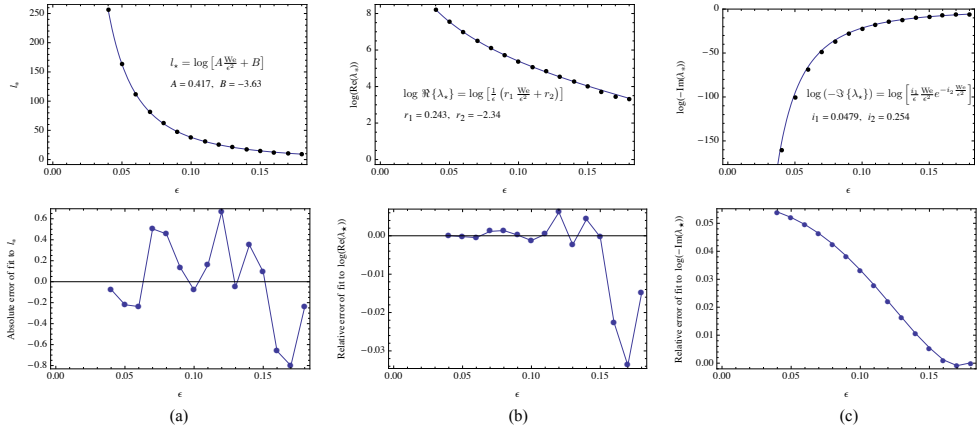}
   \caption{Numerical computations of \textup{(a)} $l_\star(\epsilon)$, angular momentum index of slowest decaying mode (left column), \textup{(b)} real part, $\Re\lambda_\star(\epsilon)$ (center) and \textup{(c)} imaginary  part, $\Im\lambda_\star(\epsilon)$ (right)  of the corresponding Rayleigh - deformation resonances, \smash[b]{$\lambda_{l_\star(\epsilon)}=\lambda_\star(\epsilon)$}.  Numerically computed values are indicated by dots. A  numerical fit (solid curve) to the functional form displayed in the inset of each plot is also given. }
   \label{fig:lopt-fit-0_03-0_20}
\end{figure}

\begin{remark}\label{remark:minimize}
Formally  minimizing $\mathcal{M}(l,\epsilon)$ over $l\ge2$ for fixed $\epsilon\We^{-{1\over2}}$, will approximately determine the longest lived mode  and its lifetime. The optimization can be  performed by treating $l$ as a continuous index and applying Stirling's formula, $n!\approx \sqrt{2\pi n}\ (n/e)^n,\ n\gg1$. This leads to a formal approximation of $l_\star(\epsilon)$, the angular momentum index of the most slowly decaying mode, and $\lambda^\pm_{l_\star(\epsilon)}=\lambda^\pm_\star(\epsilon)=\Re\lambda^\pm_\star(\epsilon) + i \Im\lambda^{\pm}_\star(\epsilon),\  \Im\lambda^{\pm}\star<0 $, the corresponding scattering resonance energies.
In particular, we obtain:
$
l_\star(\epsilon) \sim \frac{4}{e^3}\ \epsilon^{-2}\,\We$, $\ 
\Re\lambda^\pm_\star(\epsilon) \sim   \pm\frac{1}{\epsilon} \left( \frac{4}{e^3} \right)^{3/2}\left(\epsilon^{-2}\,\We\right)$, $\ 
\Im \lambda^\pm_\star(\epsilon) \sim - \frac{1}{\epsilon}\, \frac{4}{e^4} \left(\epsilon^{-2}\We\right) \, \exp\!\left( -\frac{4}{e^3}\, \epsilon^{-2}\, \We \right).$
\end{remark}
%% end section 7 long-lived
%\input{Rayleigh-pf} %8
%
%%%%%-------------Rayleigh-pf.tex------------------------------%%%
\section{Rayleigh deformation resonance analysis; Proof of Theorem~\ref{thm:2smallep}}\label{sec:rayleigh-res-analysis}

In this section we prove the assertions of 
Theorem~\ref{thm:2smallep} concerning the {\it Rayleigh resonances}, $\lambda^\pm_l(\epsilon),\ l=0,2,3,\dotsc$ appearing very close to the real axis for $\epsilon$ small.

%%%%%%%%%%%%%%%%%%%%%%%%
The main ingredients of the proof of Theorem~\ref{thm:2smallep} are the following several propositions, which establish the detailed structure of the Taylor expansion of $\lambda^{2} + \rlhat G_{l}(\epsilon\lambda)=0$ for~$\lambda$ bounded and~$\epsilon$ sufficiently small. This will enable the explicit determination of the Taylor expansion of the Rayleigh resonances, $\lambda_l^\pm(\epsilon)$, and in particular their imaginary parts.
%\bigskip

%
\begin{proposition} \label{thm:def-res-series}
 For $\epsilon$ small, the equation $\lambda^{2} + \rlhat G_{l}(\epsilon\lambda)=0$ can be written as
\beq
\lambda^{2} + \rlhat \Bigg[\underbrace{-(l+1) + \sum_{j=1}^{\infty} \frac{(\epsilon \lambda)^{2j}}{(2j)!}G_{l}^{(2j)}(0)}_{{\cal A}(\lambda)} +  \underbrace{\sum_{j=1}^{\infty} \frac{(\epsilon \lambda)^{2j+1}}{(2j+1)!}G_{l}^{(2j+1)}(0)}_{{\cal B}(\lambda)} \Bigg]  = 0,  \label{eq:def-res-series}
\eeq
where ${\cal A}(\lambda)$ is even and real for real  $\lambda$ and 
 ${\cal B}(\lambda)$ is odd and purely imaginary for real $\lambda$. 
\end{proposition}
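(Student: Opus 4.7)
The proposition consists of two claims: first, that the displayed series is simply the Taylor expansion of $G_l(\epsilon\lambda)$ about the origin with even and odd powers grouped into ${\cal A}(\lambda)$ and ${\cal B}(\lambda)$; and second, that $G_l^{(2j)}(0)\in\R$ while $G_l^{(2j+1)}(0)\in i\R$, so that ${\cal A}$ is real-valued and ${\cal B}$ is purely-imaginary-valued on $\R$. The first claim is immediate: Proposition~\ref{thm:Gl-poly} writes
\[
 G_l(z) \;=\; -(l+1) \,+\, iz \,+\, \frac{z\,p_l'(z)}{p_l(z)},
\]
and since $p_l(0)\ne 0$ by the explicit representation in Appendix~\ref{app:polyhank}, the function $G_l$ is holomorphic in a neighborhood of $z=0$ with $G_l(0)=-(l+1)$. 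Hence the Taylor series exists, and splitting it into even and odd parts produces the stated form.

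The heart of the proof is the second claim, which I plan to reduce to the reflection identity
\[
 \overline{G_l(-\bar z)} \;=\; G_l(z)
\]
valid for small $|z|$. Indeed, expanding $G_l(z) = \sum_n g_n z^n$ and equating coefficients forces $g_n = (-1)^n\overline{g_n}$, which is exactly the required reality pattern. By analytic continuation it suffices to verify the identity along the imaginary axis; equivalently, to show that $G_l(iy)\in\R$ for all real $y$ near $0$.

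To verify $G_l(iy)\in\R$, I will exploit the explicit series in Appendix~\ref{app:polyhank} to establish the structural identity $p_l(iy) = -i\,Q_l(y)$, where
\[
 Q_l(y) \;=\; \sum_{k=0}^{l}\frac{(l+k)!}{k!\,(l-k)!\,2^k}\,y^{\,l-k}
\]
is a polynomial with real (in fact positive) coefficients; differentiating yields $p_l'(iy) = -Q_l'(y)$. Substituting $z = iy$ into the formula for $G_l$ and cancelling the factors of $i$ collapses everything to
\[
 G_l(iy) \;=\; -(l+1) \,-\, y \,+\, y\,\frac{Q_l'(y)}{Q_l(y)},
\]
which is manifestly real, completing the reduction.

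I expect the only mildly delicate step to be verifying $p_l(iy) = -i\,Q_l(y)$, since this requires careful bookkeeping of the alternating powers of $i$ in the classical series for the spherical Hankel function; everything else is routine. Once this structural identity is in hand, grouping even and odd powers in the Taylor expansion of $G_l(\epsilon\lambda)$ yields~\eqref{eq:def-res-series} with ${\cal A}$ real and ${\cal B}$ purely imaginary on the real axis, and the proposition is proved.
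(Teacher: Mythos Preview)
Your proof is correct and follows essentially the same route as the paper: both reduce the claim to showing $G_l(iy)\in\R$ for real $y$, and both verify this by computing $p_l(iy)$ and $p_l'(iy)$ explicitly from the polynomial representation in Appendix~\ref{app:polyhank}. The only minor difference is that you deduce the parity of the Taylor coefficients directly from the reflection identity $\overline{G_l(-\bar z)}=G_l(z)$, whereas the paper extracts the same conclusion via a short induction on the order of the derivative; your argument is slightly cleaner but equivalent.
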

\begin{proof}
This proposition follows by Taylor expansion of $G_l(z)$ about $z=0$ and the following:
\begin{claim} \label{thm:even-odd} 
For all $j=0,1,2,\dotsc$
\beq
\Im \bigset{G_{l}^{(2j)}(0)} = 0, \qquad \Re \bigset{G_{l}^{(2j+1)}(0)} = 0. \label{eq:even-odd}
\eeq
\end{claim}
This claim follows directly from the observation that 
\begin{equation}
G_{l}(z)\ {\rm  is\ real\ on\ the\ imaginary\ axis.}
 \label{it:Glziaxis}
 \end{equation}
To prove~\eqref{it:Glziaxis}, we consider $G_{l}(iy),\ y\in\R$. By Proposition~\ref{thm:Gl-poly}, it suffices to show that the ratio $p_{l}'(iy)/p_{l}(iy)$ is purely imaginary. By~\eqref{eq:poly-sum} we have 
\begin{align}
p_{l}(iy) &= \sum_{n=0}^{l} a_{n}^{l}(iy)^{n} 
= -i\ \sum_{n=0}^{l} \frac{(2l-n)!}{2^{l-n}(l-n)!n!} y^{n},\ \ {\rm which\ is\  purely\ imaginary}\nn\\
p_{l}'(iy) &=  -\ \sum_{n=0}^{l} \frac{(2l-n)!n}{2^{l-n}(l-n)!n!} y^{n-1},\ \ {\rm which\ is\  real}\ .
\end{align}
 The ratio is therefore purely imaginary, which implies that $G_l(iy)$ is real for $y$ real. \\

We now prove Claim~\ref{thm:even-odd} by induction. Consider the rational representation of $G_l$ in  Proposition~\ref{thm:Gl-poly}. As noted in~\eqref{hankrat-lim}, $G_l(0)=-(l+1)$. 
Moreover, for $y\in\R$
\begin{equation}
\D G_l(0) = \lim_{y \to 0} \frac{G_{l}(iy)-G_{l}(0)}{iy} =  -i \lim_{y \to 0} \frac{G_{l}(iy)-G_{l}(0)}{y}\ \in\ i\R,\nn
\end{equation}
 since $G_l(iy)\in\R$ for $y\in\R$.

 Next, we make the induction hypothesis:
 \begin{equation}
 \Im \left(\frac{d^{2j-2}G_{l}}{dz^{2j-2}}(0)\right) = 0,\ \ {\rm and}\ \ \Re \left(\frac{d^{2j-1}G_{l}}{dz^{2j-1}}(0)\right) = 0\ 
 \label{eq:ihype}\end{equation}
 and seek to prove the assertion~\eqref{eq:ihype} with $j$ replaced by $j+1$.
 
From the Taylor series of $G_l(z)$ we have:
\begin{equation}
\frac{d^{2j}G_{l}}{dz^{2j}}(0) = \lim_{y \to 0} \frac{(2j)!}{(iy)^{2j}}\!\! \left[G_{l}(iy) - \!\!\sum_{k=1}^{j} \frac{(iy)^{2k-2}}{(2k-2)!} \frac{d^{2k-2}G_{l}}{dz^{2k-2}}(0) -  
\!\!\sum_{k=1}^j \frac{(iy)^{2k-1}}{(2k-1)!}\frac{d^{2k-1}G_l}{dz^{2k-1}}(0)\!\right]\!.
\end{equation}
Thus, $\Im\ \D^{2j}G_l(0)=0$ is real by  induction hypothesis~\eqref{eq:ihype}. 

Similarly,  we have 
\begin{align}
\frac{d^{2j+1}G_{l}}{dz^{2j+1}}(0) = \lim_{y \to 0} \frac{(2j+1)!}{(iy)^{2j+1}} \!\left[G_{l}(iy) - \!\!\sum_{k=1}^{j} \frac{(iy)^{2k}}{(2k)!} \frac{d^{2k}G_{l}}{dz^{2k}}(0) -  
\!\!\sum_{k=1}^j \frac{(iy)^{2k-1}}{(2k-1)!}\frac{d^{2k-1}G_l}{dz^{2k-1}}(0)\right]
\end{align}
which is purely imaginary by induction hypothesis and since $\Im\ \D^{2j}G_l(0)=0$. 
This completes the proof of Claim~\ref{thm:even-odd} and thus Proposition~\ref{thm:def-res-series}.
\qquad\end{proof}
\begin{proposition}\label{proposition:qstar}
There is a positive integer
$2\le q_{*} < \infty$, for which Equation~(\ref {eq:def-res-series}) is of the form:
\beq
\lambda^{2} + \rlhat \Bigg[\underbrace{-(l+1) + \sum_{j=1}^{\infty} \frac{(\epsilon \lambda)^{2j}}{(2j)!}G_{l}^{(2j)}(0)}_{{\cal A}(\lambda)}\, + \, \underbrace{\phantom{\sum_{j=1}^{l}}\!\!\!\!\!\!\!\!\! \frac{(\epsilon \lambda)^{2q_*+1}}{(2q_*+1)!}G_{l}^{(2q_*+1)}(0) + \bigO\left( \epsilon^{2q_*+3} \right)}_{ {\cal B}(\lambda)}\Bigg] = 0,
\eeq
where ${\cal A}(\lambda)$ is even and real-valued for real $\lambda$ and ${\cal B}(\lambda)$ is odd and imaginary-valued for 
real $\lambda$.
\end{proposition}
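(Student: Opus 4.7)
The plan is to locate $q_*$ explicitly by showing that the first nonzero coefficient in the Taylor expansion of $\Im G_l(z)$ on the real axis occurs at order $z^{2l+1}$.  Since Claim~\ref{thm:even-odd} gives $G_l^{(2j+1)}(0) \in i\R$ while $G_l^{(2j)}(0)\in\R$, the odd Taylor coefficients of $G_l$ are completely determined by the imaginary part of $G_l$ restricted to $\R$, and so Proposition~\ref{proposition:qstar} reduces to pinning down the first nonvanishing term of this odd real-variable series.

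I would begin with the decomposition $h_l^{(1)}(z) = j_l(z) + i y_l(z)$ into the spherical Bessel functions of the first and second kind (real on $\R$), and rationalize the denominator of $G_l(z) = z(h_l^{(1)})'(z)/h_l^{(1)}(z)$ to obtain, for $z\in\R\setminus\{0\}$,
\[
G_l(z) \;=\; \frac{z(j_l' j_l + y_l' y_l)}{j_l^2 + y_l^2} \;+\; i\, \frac{z(j_l y_l' - j_l' y_l)}{j_l^2 + y_l^2}.
\]
The Wronskian identity $j_l(z)\,y_l'(z) - j_l'(z)\,y_l(z) = 1/z^2$ then collapses the imaginary part to
\[
\Im G_l(z) \;=\; \frac{1}{z\bigl(j_l(z)^2 + y_l(z)^2\bigr)}, \qquad z\in\R\setminus\{0\}.
\]
Inserting the standard small-$z$ expansions $j_l(z) = z^l/(2l+1)!! + \bigO(z^{l+2})$ and $y_l(z) = -(2l-1)!!/z^{l+1} + \bigO(z^{-l+1})$, and using that $j_l^2 = \bigO(z^{2l})$ is utterly subdominant to the pole $y_l^2 \sim [(2l-1)!!]^2 z^{-2l-2}$ of order $2l+2$ at the origin, yields
\[
\Im G_l(z) \;=\; \frac{z^{2l+1}}{\bigl((2l-1)!!\bigr)^2}\bigl(1 + \bigO(z^2)\bigr) \qquad \text{as } z \to 0 \text{ in } \R.
\]

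Matching the Taylor series of both sides, namely $\Im G_l(x) = \sum_n \Im(c_n)\,x^n$ with $c_n = G_l^{(n)}(0)/n!$, forces $\Im c_n = 0$ for every odd $n\le 2l-1$ and $\Im c_{2l+1} = 1/((2l-1)!!)^2 \ne 0$.  Combined with Claim~\ref{thm:even-odd}, this gives $G_l^{(2j+1)}(0) = 0$ for $1 \le j \le l-1$ and $G_l^{(2l+1)}(0) \ne 0$.  Setting $q_* := l$ then realizes the claimed normal form of the resonance equation, with the stated $\bigO(\epsilon^{2q_*+3})$ remainder absorbing the tail $\sum_{j\ge l+1}(\epsilon\lambda)^{2j+1} G_l^{(2j+1)}(0)/(2j+1)!$.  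Because the Rayleigh-resonance analysis in this section concerns $l \ge 2$, one has $q_* = l \ge 2$ and $q_* < \infty$, as asserted.

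The main technical obstacle is justifying that the regular term $j_l^2$ does not perturb the leading order of $j_l^2 + y_l^2$: this amounts to observing that $j_l^2/y_l^2 = \bigO(z^{4l+2})$ is far smaller than any power of $z$ relevant to the identification of the exponent $2l+1$ and the prefactor $1/((2l-1)!!)^2$, so that the power-series inversion $1/(j_l^2 + y_l^2)$ inherits its leading behavior cleanly from $1/y_l^2$.  Everything else is bookkeeping: converting the Taylor coefficients $c_n$ into derivatives $G_l^{(n)}(0)$ and repackaging the higher-order terms of $\mathcal{B}(\lambda)$ as the asserted $\bigO(\epsilon^{2q_*+3})$ error.
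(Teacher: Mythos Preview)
Your argument is correct and reaches the same conclusion $q_*=l$ (with the same leading coefficient, since $(2l-1)!!=(2l)!/(2^l l!)$), but the route differs from the paper's.  The paper isolates the \emph{odd part} $G_l(z)-G_l(-z)$ via the polynomial representation $h_l^{(1)}(z)=z^{-l-1}p_l(z)e^{iz}$, writes it as $z[-2iz^{2l}+R_l(z)]/[p_l(z)p_l(-z)]$, and then proves the ``subtle cancellation'' $R_l\equiv 0$ through a somewhat lengthy calculation that ultimately invokes the Wronskian of $h_l^{(1)}$ and $h_l^{(2)}$.  You instead isolate the \emph{imaginary part} of $G_l$ on the real axis via $h_l^{(1)}=j_l+iy_l$ and apply the Wronskian $j_ly_l'-j_l'y_l=z^{-2}$ directly, obtaining the closed form $\Im G_l(x)=\bigl[x(j_l^2+y_l^2)\bigr]^{-1}$ in one line.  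Both approaches rest on the same Wronskian identity in different guises, but yours bypasses the polynomial algebra and the $R_l$ lemma entirely; the paper's version has the minor advantage of producing the odd part as an explicit rational function of a complex variable, though that extra information is not used downstream.  One point worth making explicit in your write-up: the matching of Taylor coefficients is justified because $G_l$ is analytic at $0$ (as the paper notes from $G_l(z)=-(l+1)+iz+zp_l'(z)/p_l(z)$), so $\Im G_l(x)$ is real-analytic and its power series must agree term-by-term with the small-$x$ expansion of $1/[x(j_l^2+y_l^2)]$; since $j_l^2+y_l^2$ is even with a pole of exact order $2l+2$, that expansion is a genuine convergent series beginning at $x^{2l+1}$.
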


\emph{Proof of Proposition~\ref{proposition:qstar}}.
%%%%%
We begin by observing that  $G_l(z)$ is {\it not} an even function of $z$.  This is a direct consequence of the asymptotic behavior 
 of $G_l(x)$ as $x\to\pm\infty$: 
\begin{align}
G_{l}(x) &\approx -(l+1)+ix + l = -1 + ix,\ \ x\gg1\nn\\
G_{l}(-x) &\approx -(l+1)-ix - l = -(2l+1) -ix,\ \ x\gg1.
\end{align}
Therefore, there is a first non-zero odd derivative, Taylor coefficient. We call the order of this coefficient, $q_*$.
%%%
The next result  asserts that $q_*=l$. 
\qquad\endproof

%%%
\begin{proposition} \label{thm:imagterm}
\beq
G_{l}^{(2k+1)}(0) = 
\begin{cases}
	i\ (2l+1)! \left(\frac{2^{l}l!}{(2l)!}\right)^{2}, & k=l, \\
	0, & 0\le k\le l-1.
\end{cases}
\label{imag-Gl2kp1}\eeq
\end{proposition}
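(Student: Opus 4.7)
The plan is to exploit the parity/pole structure of $j_l$ and $y_l$ in the decomposition $h_l^{(1)}(z)=j_l(z)+iy_l(z)$ and read off the lowest-order odd contributions to $G_l(z)$ from the logarithmic derivative. First I recall that $j_l(z)/z^l$ is an entire function of $z^2$ with $j_l(z)/z^l\big|_{z=0}=\tfrac{2^l l!}{(2l+1)!}$, while $z^{l+1}y_l(z)$ is an entire function of $z^2$ with $z^{l+1}y_l(z)\big|_{z=0}=-\tfrac{(2l)!}{2^l l!}$. Consequently, $z^{l+1}h_l^{(1)}(z)=\Phi(z)$ admits the decomposition
\begin{equation*}
\Phi(z)=\underbrace{iE(z^2)}_{\Phi_e(z)}+\underbrace{z^{2l+1}F(z^2)}_{\Phi_o(z)},\qquad E(0)=-\tfrac{(2l)!}{2^l l!},\ \ F(0)=\tfrac{2^l l!}{(2l+1)!},
\end{equation*}
where $\Phi_e$ is even and $\Phi_o$ is odd, with $\Phi_o(z)=\mathcal{O}(z^{2l+1})$. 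Since $h_l^{(1)}(z)=z^{-l-1}\Phi(z)$, the representation $G_l(z)=-(l+1)+z\Phi'(z)/\Phi(z)$ reduces the problem to locating odd powers of $z$ in $z(\log\Phi)'$.

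Next I expand $\log\Phi=\log\Phi_e+\log\!\bigl(1+\Phi_o/\Phi_e\bigr)$. The first summand is even in $z$, so its derivative is odd and $z(\log\Phi_e)'$ is even; hence $\log\Phi_e$ contributes \emph{no} odd-power coefficients to $z\Phi'/\Phi$. All odd contributions come from $z\frac{d}{dz}\log(1+w)$ with $w(z)=\Phi_o(z)/\Phi_e(z)=\mathcal{O}(z^{2l+1})$. Using $\log(1+w)=\sum_{k\ge1}(-1)^{k-1}w^k/k$, differentiating, and multiplying by $z$, I obtain
\begin{equation*}
\frac{zw'(z)}{1+w(z)}=zw'(z)\sum_{k\ge 0}(-w(z))^k .
\end{equation*}
Since $w=\mathcal{O}(z^{2l+1})$, the term $zw'\cdot w^k$ is $\mathcal{O}(z^{(2l+1)(k+1)})$. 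Therefore, for any odd index $2k+1<2l+1$, no term on the right-hand side contributes, which proves $G_l^{(2k+1)}(0)=0$ for $0\le k\le l-1$. Moreover, the coefficient of $z^{2l+1}$ is determined solely by the $k=0$ term, namely $[zw']_{z^{2l+1}}=(2l+1)\,[w]_{z^{2l+1}}$.

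Finally I compute the leading coefficient of $w$. Since $w(z)=\Phi_o(z)/\Phi_e(z)$ begins with $\frac{F(0)}{iE(0)}\,z^{2l+1}$, a direct substitution yields
\begin{equation*}
[w]_{z^{2l+1}}\ =\ \frac{F(0)}{iE(0)}\ =\ \frac{2^l l!/(2l+1)!}{-i\,(2l)!/(2^l l!)}\ =\ i\,\frac{(2^l l!)^2}{(2l+1)!\,(2l)!} .
\end{equation*}
Hence $g_{2l+1}:=G_l^{(2l+1)}(0)/(2l+1)!=(2l+1)\cdot i\,(2^l l!)^2/((2l+1)!(2l)!)$, and multiplying by $(2l+1)!$ and simplifying $\tfrac{(2l+1)(2l+1)!}{(2l+1)!(2l)!}=\tfrac{(2l+1)!}{((2l)!)^2}$ gives exactly $G_l^{(2l+1)}(0)=i\,(2l+1)!\bigl(\tfrac{2^l l!}{(2l)!}\bigr)^2$, which is~\eqref{imag-Gl2kp1}. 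The main bookkeeping point---and the only place where care is required---is verifying that the higher-order terms $w^k$, $k\ge 1$, cannot generate odd powers below $z^{3(2l+1)}$, and that $z(\log\Phi_e)'$ produces no odd powers at all; both follow from the clean even/odd separation of $\Phi_e$ and $\Phi_o$ guaranteed by the parities of $j_l$ and $y_l$.
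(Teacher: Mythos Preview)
Your proof is correct and takes a genuinely different route from the paper's. The paper computes $G_l(z)-G_l(-z)$ via the polynomial representation $h_l^{(1)}(z)=z^{-l-1}p_l(z)e^{iz}$ and reduces the result to a Wronskian identity between $h_l^{(1)}$ and $h_l^{(2)}$ (their Lemma~\ref{thm:wronskian}), which requires a page-long explicit calculation. You instead use the splitting $h_l^{(1)}=j_l+iy_l$ together with the parity facts that $j_l(z)/z^l$ and $z^{l+1}y_l(z)$ are entire functions of $z^2$; this immediately yields $\Phi=z^{l+1}h_l^{(1)}=\Phi_e+\Phi_o$ with $\Phi_o=\mathcal{O}(z^{2l+1})$, and the logarithmic-derivative argument does the rest. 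Your approach is more conceptual and shorter: the ``subtle cancellation'' the paper works hard for (their $R_l\equiv 0$) is, from your point of view, simply the statement that the even part of $\Phi$ comes entirely from $iy_l$ and the odd part from $j_l$. The paper's route, on the other hand, makes the Wronskian origin of the constant $(2^l l!/(2l)!)^2$ explicit and stays within the $p_l$-representation used elsewhere in the paper. One small presentational point: your final arithmetic simplification is correct but obscurely written; it is clearer to note directly that $(2l+1)!\cdot(2l+1)\big/\bigl((2l+1)!\,(2l)!\bigr)=(2l+1)/(2l)!=(2l+1)!/((2l)!)^2$.
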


\emph{Proof of Proposition~\ref{thm:imagterm}}. 
Since we are interested in the odd derivatives of $G_l$ at the origin we calculate:
\begin{align}
G_{l}(z) - G_{l}(-z) &= -(l+1) + iz +  \frac{z p_{l}'(z)}{p_{l}(z)} +(l+1) + iz - \frac{-z p_{l}'(-z)}{p_{l}(-z)} \nn  \\
&= 2iz + z \left[ \frac{ p_{l}'(z)}{p_l(z) } + \frac{ p_{l}'(-z)}{p_l(-z) }\right] \nn  \\
&= z \left[\frac{  2i p_{l}(z)p_{l}(-z) + p_{l}'(z)p_{l}(-z) + p_{l}'(-z)p_{l}(z) }{p_{l}(z)p_{l}(-z) } \right].   
\end{align}
By~\eqref{eq:poly-sum}, $a_l^l= i^{-l-1}$ and therefore
$
2i p_{l}(z)p_{l}(-z) = 2i a_l^l z\ a_l^l (-z)^l  + \cO(z^{2l-1})= -2iz^{2l}
 +  \cO(z^{2l-1}).
$
%}
This suggests separating out this term and writing
\begin{equation}
G_{l}(z) - G_{l}(-z) = z \left[ \frac{ -2iz^{2l} + R_{l}(z)}{p_{l}(z)p_{l}(-z) } \right],\label{Gl-odd}
\end{equation}
where 
\begin{equation}
R_{l}(z) \equiv 2i p_{l}(z) p_{l}(-z) \ +\  p_{l}'(z) p_{l}(-z) + p_{l}'(-z) p_{l}(z)\ -\ \left( -2i z^{2l} \right)
\label{Rldef}
\end{equation}
The expression~\eqref{Gl-odd} simplifies due to a subtle cancellation which follows from
\begin{lemma} \label{thm:wronskian}
\beq  \label{eq:zero}
R_{l}(z) = 0
\eeq
and therefore
\begin{equation}
G_{l}(z) - G_{l}(-z) = -2iz^{2l+1} \left[ \frac{ 1 }{p_{l}(z)p_{l}(-z) } \right]
\nn\end{equation}
\end{lemma}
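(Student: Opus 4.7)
The plan is to identify $R_l(z)$ with the classical Wronskian identity for the two spherical Hankel functions $h_l^{(1)}$ and $h_l^{(2)}$, and then invoke the known value of that Wronskian. From Appendix~\ref{app:polyhank} we have the polynomial representation $h_l^{(1)}(z) = z^{-l-1} p_l(z) e^{iz}$. The parity relation $h_l^{(2)}(z) = (-1)^l h_l^{(1)}(-z)$ (readily verified for $l=0$ using $h_0^{(1)}(z) = -ie^{iz}/z$, $h_0^{(2)}(z) = ie^{-iz}/z$, and then propagated via the standard Hankel recursion) converts this into the companion representation
\begin{equation*}
h_l^{(2)}(z)\ =\ -\,z^{-l-1}\, p_l(-z)\, e^{-iz}.
\end{equation*}

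With both representations in hand, I would differentiate each and form the Wronskian $W[h_l^{(1)}, h_l^{(2)}] = h_l^{(1)} (h_l^{(2)})' - (h_l^{(1)})' h_l^{(2)}$. The factors $e^{\pm iz}$ multiply to $1$, and the apparently singular $(l+1)/z$ contributions cancel thanks to the $z\leftrightarrow -z$ antisymmetry between the two terms. A short algebraic reduction yields
\begin{equation*}
W\!\left[h_l^{(1)}, h_l^{(2)}\right]\!(z)\ =\ z^{-2l-2}\bigl\{\, p_l'(z)\, p_l(-z)\, +\, p_l(z)\, p_l'(-z)\, +\, 2i\, p_l(z)\, p_l(-z)\, \bigr\}.
\end{equation*}

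On the other hand, the classical identity $W[h_l^{(1)}, h_l^{(2)}](z) = -2i/z^2$ (obtained, for example, by specializing $W[H_\nu^{(1)}, H_\nu^{(2)}] = -4i/(\pi x)$ via $h_l^{(1)}(z) = \sqrt{\pi/(2z)}\, H_{l+1/2}^{(1)}(z)$, or by Abel's formula applied to the spherical Bessel equation) provides a second expression for the same quantity. Equating the two and clearing the factor $z^{-2l-2}$ produces
\begin{equation*}
p_l'(z)\, p_l(-z)\, +\, p_l(z)\, p_l'(-z)\, +\, 2i\, p_l(z)\, p_l(-z)\ =\ -2i\, z^{2l},
\end{equation*}
which, upon comparison with the definition of $R_l(z)$ in~\eqref{Rldef}, is exactly the assertion $R_l(z)\equiv 0$.

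The main delicacy is pinning down the two signs: first, the parity formula is $h_l^{(2)}(z) = (-1)^l h_l^{(1)}(-z)$ rather than $(-1)^{l+1} h_l^{(1)}(-z)$, so that the companion formula for $h_l^{(2)}$ carries an overall minus sign; second, the Wronskian is $-2i/z^2$, not $+2i/z^2$. Both conventions are easily verified against the explicit $l=0$ formulas (with $p_0\equiv -i$, $p_0(z)p_0(-z)=-1$, the identity reduces to $-2i = -2i$), which provides a useful sanity check before proceeding to general $l$. Once these normalizations are fixed, the remainder of the proof is purely algebraic and no further input beyond the standard Hankel function theory is needed.
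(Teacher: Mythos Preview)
Your proposal is correct and follows essentially the same approach as the paper: both arguments rest on the Wronskian identity $W[h_l^{(1)},h_l^{(2)}](z)=-2i/z^2$ combined with the parity relation $h_l^{(2)}(z)=(-1)^l h_l^{(1)}(-z)$. The only difference is organizational: you compute the Wronskian directly from the polynomial representations of $h_l^{(1)}$ and $h_l^{(2)}$ and then read off $R_l\equiv 0$, whereas the paper substitutes the Hankel representation back into $R_l$ and simplifies through a longer chain of cancellations before invoking the Wronskian at the end---your route is a bit cleaner, but the mathematical content is the same.
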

We prove~\eqref{eq:zero} below, but first note its consequences.
 For $z$ small we have
\begin{align}
G_{l}(z) - G_{l}(-z) &= \frac{ -2iz^{2l+1}}{\big(a_{0}^{l}\big)^{2} + \cO(z)}  = 2i \left[ \frac{2^{l}l!}{(2l)!} \right]^{2}  z^{2l+1}  +  \cO\bigl( z^{2l+3} \bigr)  \label{eq:oddGl}
\end{align}
where we have  used that
\beq
a_{0}^{l}= -i \frac{ (2l)!}{2^{l} l! } \nn .
\eeq
Since
\beq
G_{l}(z) - G_{l}(-z) = 2 \sum_{k=0}^{\infty} \frac{ G_{l}^{(2k+1)}(0)}{(2k+1)! } z^{2k+1}, 
\eeq
matching terms with Equation~(\ref {eq:oddGl}) implies $q_*=l$
 and Equation~(\ref {imag-Gl2kp1}). 

To complete the proof of Proposition~\ref{thm:imagterm} we now give the proof of Lemma~\ref{thm:wronskian}. The idea is to re-express the polynomial expression for $R_l(z)$ in terms of spherical Hankel functions and to achieve simplification by using 
 the Wronskian identity governing $h_l^{(1)}$ and $h_l^{(2)}$.  
Recall
\begin{align}
p_{l}(z)  &=  z^{l+1} h_{l}^{(1)}(z)  e^{-iz}  \\
p_{l}'(z)  &=  z^{l}\left[ z h_{l}^{(1)}{'}(z) + (l+1-iz) h_{l}^{(1)}(z) \right] e^{-iz}.
\end{align}
Substituting into~\eqref{Rldef} yields
\begin{align}
R_{l}(z) &= 2i \left[ z^{l+1} h_{l}^{(1)}(z) e^{-iz} (-1)^{l+1} z^{l+1} h_{l}^{(1)}(-z) e^{iz}  + z^{2l} \right]  \nn \\
&\quad{}+  z^{l} \left[ z \D h_{l}^{(1)}(z)  + (l+1-iz) h_{l}^{(1)}(z)  \right]  e^{-iz} (-1)^{l+1} z^{l+1} h_{l}^{(1)}(-z)  e^{iz} \nn \\
&\quad{}+  (-1)^{l} z^{l}  \left[ -z \D h_{l}^{(1)}(-z) + (l+1+iz) h_{l}^{(1)}(-z) \right] e^{iz} z^{l+1} h_{l}^{(1)}(z) e^{-iz}  \nn \\
&= 2i z^{2l} \left[ (-1)^{l+1}z^{2} h_{l}^{(1)}(z)  h_{l}^{(1)}(-z)  + 1 \right]   \nn \\
&\quad{}+ (-1)^{l+1} z^{2l}  \left[ z^{2} \D h_{l}^{(1)}(z) h_{l}^{(1)}(-z)  + (\cancel{l+1}-iz) z h_{l}^{(1)}(z) h_{l}^{(1)}(-z) \right]  \nn \\
&\quad{}+ (-1)^{l+1} z^{2l}  \left[ z^{2} \D h_{l}^{(1)}(-z) h_{l}^{(1)}(z)  - (\cancel{l+1}+iz) z h_{l}^{(1)}(z) h_{l}^{(1)}(-z) \right]   \nn \\
&= 2i z^{2l}  \left[ \cancel{(-1)^{l+1} z^{2} h_{l}^{(1)}(z)  h_{l}^{(1)}(-z)}  + 1\right]  \nn \\
&\quad{}+\! (-1)^{l+1} z^{2l} \! \left[ z^{2} \!\left( \D h_{l}^{(1)}(z) h_{l}^{(1)}(-z) \!+ \!\D h_{l}^{(1)}(-z) h_{l}^{(1)}(z)  \right) \!- \cancel{2i z^{2} h_{l}^{(1)}(z) h_{l}^{(1)}(-z)} \right]   \nn \\
&= z^{2l}  \left[ 2i + (-1)^{l+1} z^{2}  \left( \D h_{l}^{(1)}(z) h_{l}^{(1)}(-z) + \D h_{l}^{(1)}(-z) h_{l}^{(1)}(z)  \right) \right]  =\ z^{2l} \left[\ 0\ \right] = 0\ .\label{longcalc}
\end{align}
In the last line we used a Wronskian relation 
$h_{l}^{(1)}(z) \D h_{l}^{(2)}(z)  -  h_{l}^{(2)}(z) \D h_{l}^{(1)}(z)  = -2iz^{-2}
$
with the analytic continuation formulae~\cite{Abramowitz:1965zr} 
$
h_{l}^{(2)}(z)= (-1)^{l} h_{l}^{(1)}(-z)$ and $\D h_{l}^{(2)}(z)=(-1)^{l+1} \D h_{l}^{(1)}(-z)$ . 
  This completes the proof of the key Lemma~\ref{thm:wronskian} and thus Proposition~\ref{thm:imagterm}.\qquad\endproof

We now complete of the proof of Theorem~\ref{thm:2smallep}.
 Recall that deformation scattering resonance energies satisfy the 
equation $\lambda^{2} + \rlhat G_{l}(\epsilon \lambda) = 0$. In the 
(incompressible) limit, $\epsilon\to0$, we have the equation
 $\lambda^2+\rlhat G_l(0)=0$, which has two real solutions,
  the Rayleigh frequencies, to which there are associated undamped oscillatory modes of the incompressible linearized equations about the spherical bubble equilibrium. 
  
  We are interested in where these Rayleigh frequencies perturb for positive and small $\epsilon$. By Theorem~\ref{thm:def-res-general}, they necessarily perturb to the open lower half plane. Propositions
~\ref{thm:def-res-series} and~\ref{thm:imagterm}  enable us to compute, in detail, the location of the {\it Rayleigh scattering resonances} to which these real frequencies perturb.
\\ \\
We first consider the case: $l=0$. 
Using properties of  Hankel functions (see Appendix~\ref {app:polyhank})
  we have $G_{0}(\epsilon\lambda)=i\epsilon\lambda-1$
  and the equation for the resonance energies is:
$
\lambda^{2}+ i\epsilon \rhat_{0} \lambda-\rhat_{0}=0$
yielding the solution in Equation~(\ref {lambda-eqn}). This completes the proof of Part 1 of Equation~(\ref {lambda-eqn}).
\\ \\
 For general $l\ge2$, we can, by Propositions~\ref{proposition:qstar} and~\ref{thm:imagterm}, write the resonance equation in the form:
\beq
\lambda^{2} + \rlhat \Biggl[\underbrace{-(l+1) + \sum_{j=1}^{l} \frac{(\epsilon \lambda)^{2j}}{(2j)!}G_{l}^{(2j)}(0)}_{{\cal A}(\lambda)}\, +\,  \underbrace{\phantom{\sum_{j=1}^{l}}\!\!\!\!\!\!\!\!\! \frac{(\epsilon \lambda)^{2l+1}}{(2l+1)!}G_{l}^{(2l+1)}(0)}_{{\cal B}(\lambda)} + \bigO\Bigl(\epsilon^{2l+2}\Bigr)\Biggr]  = 0, \label{eq:lambdaseries}
\eeq
where ${\cal A}(\lambda)$ is even and real-valued for real $\lambda$ and ${\cal B}(\lambda)$ is odd and purely imaginary for real $\lambda$ and $\rlhat>0$ is given in~\eqref{eq:rlhat-notation}.
It is clear from equation~(\ref {eq:lambdaseries}) that the lowest order term contributing an imaginary part is at~$\cO(\epsilon^{2l+1})$.
 Taking $2l+1$ derivatives in $\epsilon$ and setting $\epsilon=0$ yields
$
2 \lambda(0) \lambda^{(2l+1)}(0) + \rlhat  G_{l}^{(2l+1)}(0) \lambda(0)^{2l+1} =0.
$
Again, using Proposition~\ref{thm:imagterm}, we find 
$
\lambda^{(2l+1)}(0) = - \frac{\rlhat}{2} G^{(2l+1)}(0) \lambda(0)^{2l} \ \ {\rm and}
$
\begin{align}
\Im \lambda^{(2l+1)}(0) &= - \frac{\rlhat}{2} \left[ \rlhat(l+1) \right]^{l} (2l+1)! \left[\frac{2^{l} l!}{(2l)!}\right]^{2}
\! =  - 2^{2l-1} \rlhat^{l+1} (l+1)^{l} (2l+1)! \left[\frac{l!}{(2l)!}\right]^{2}\!.\label{eq:lambda-2l+1}
\end{align}
This proves Part 2 of Theorem~\ref{thm:2smallep}.
%
%%end Rayleigh-pf
%%%\input{def-res-nosmall-l} %9
%%
\section{Estimation of deformation resonances for \texorpdfstring{$l\ge K_{1}\epsilon^{-6-\delta},\ \delta>0$}{large $l$};\\ Proof of \texorpdfstring{Theorem~\ref{thm:big-l}}{\ref{thm:big-l}} }\label{sec:def-res-pfs}

In Theorem~\ref{thm:def-res-general}, we established that there are $l+2$ deformation resonances in the lower half plane. These are solutions of $\lambda^2+\rlhat G_l(\epsilon\lambda)=0$; see
Equations~(\ref {eq:rlhat-notation}) and~(\ref {eq:deformation-eqn})  or
\begin{align}
\frac{\left( \epsilon\lambda \right)^{2}}{\epsilon^{2} {\We}^{-1}(l+2)(l-1)} \;h_{l}^{(1)}\left( \epsilon\lambda \right) + \epsilon\lambda \;\D h_{l}^{(1)}\left( \epsilon\lambda \right) &=0.
\intertext{Equivalently, in terms of standard Hankel functions: }
\left(\frac{(\epsilon\lambda)^{2}}{\epsilon^{2}\We^{-1}\left(\nu^{2}-(3/2)^{2}\right)} - \frac{1}{2}\right)  H_{\nu}^{(1)}(\epsilon\lambda) + \epsilon\lambda \;\D H_{\nu}^{(1)}(\epsilon\lambda) &= 0,\ \qquad \nu=l+1/2.  
\label{Hnu-form}\end{align}

In this section, we estimate the location of these resonances in the parameter regime where the \emph{second} term dominates.
In Section~\ref{sec:l+1res}, we will show that $l+1$ deformation resonances are near the~$l+1$ zeros of $\D H_{\nu}^{(1)}(\epsilon\lambda)$ by applying Rouch\'e's Theorem. See Appendix~\ref {app:hankel-zeros} for the locations of the zeros of Hankel functions.  In Section~\ref{sec:1res}, the remaining resonance is found by the method of dominant balance.

\medskip

The proof of Theorem~\ref{thm:big-l} is based on an application of  Rouch\'e's Theorem~\cite{Ahlfors}: 
\ {\it Suppose $f$ and $g$ are analytic inside and on a simple, closed curve $\mathcal{C}$ and satisfy
$ \abs{g} < \abs{f}$ on $ \mathcal{C}$.
Then $f$ and $f+g$ have the same number of zeros (counting multiplicities) inside $\mathcal{C}$.}

%%%%%%%%%%%%%%%%%%%%%%%%%%%%
\subsection{Proof of  Theorem~\ref{thm:big-l}, Part 1} \label{sec:l+1res}
%%%%%%%%%%% On arc resonances
Let
\beq \label{eq:zdef}
\nu z = \epsilon\lambda   
\eeq
and rewrite~\eqref{Hnu-form} as
\beq \label{eq:fplusg}
%%%% This ugly line is used so that f and g are aligned vertically %%%%
\underbrace{\phantom{\bigg(}\!\!\!\!  \D H_{\nu}^{(1)}(\nu z)  \!\!\!\!\phantom{\bigg)}}_{f(z)}  +  \underbrace{\frac{1}{\nu z}   \left(\frac{(\nu z)^{2}}{\epsilon^{2}\We^{-1}\left(\nu^{2}-9/4\right)} - \frac{1}{2}\right) H_{\nu}^{(1)}(\nu z)}_{g(z)} = 0,\ \qquad \nu=l+1/2.
\eeq
We control the zeros of~\eqref{eq:fplusg} in terms of the zeros of $\D H_\nu^{(1)}$ for $l$ sufficiently large.
Here, $f$ and $g$ are defined for the application of Rouch\'e's Theorem to the locations of all $l+2$ resonances.  

We prove that the $l+1$ roots of Equation~(\ref {eq:fplusg}), denoted $z_{\nu,1},z_{\nu,2},\dotsc,z_{\nu,l+1}$,  are asymptotically ($l$ large) close  to zeros of $\D H_{\nu}^{(1)}(\nu z)$. This implies, via~\eqref{eq:zdef} that there are resonances at the locations:
\begin{equation}
\lambda_{l,j}(\epsilon)=\nu z_{\nu,j}/\epsilon
\label{lam2z}
\end{equation}

Let $y_{\nu,j}'$ denote a solution of $\D H_{\nu}^{(1)}(y)=0$ in the fourth quadrant; see Equation~(\ref {eq:dH-zero}).  We claim that $(f+g)(z)$ has a unique simple zero, $z_{\nu,j}$, in the interior of $\gamma_{z}$, a small curve about the point $z=\nu^{-1}y_{\nu,j}'$.  The curve $\gamma_{z}$ will be the image of a small circle defined in~\eqref{eq:big-l-rouche-p3}.
 To prove this, we show that 
%there exists $C\ge K_{1}$ such that 
for all $z\in\gamma_{z}$,
\beq
\bigabs{\frac{g(z)}{f(z)}} = \bigabs{\frac{1}{\nu z}   \left(\frac{(\nu z)^{2}}{\epsilon^{2} \We^{-1}\left(\nu^{2}-9/4\right)} - \frac{1}{2}\right) \frac{H_{\nu}^{(1)}(\nu z)}{\D H_{\nu}^{(1)}(\nu z)}} < 1,
\label{eq:big-l-rouche}
\eeq
for $\nu\ge K_1 \!\left(\epsilon^{-2}\We\right)^{3/(1-3q)},$ where $0<q<1/3$.
 
The key estimate,~\eqref{eq:big-l-rouche}, is a consequence of the following two claims. Below, we use $C$ to denote a generic positive constant, independent of $\nu, \epsilon, \We$.
%%%%%%%%%%%
\begin{claim} \label{claim:factors}
\begin{romannum}
\item Choose  $\epsilon>0$ and $0< c_{1} < c_{2}$. Assume  $c_{1} \le \abs{z} \le c_{2}$. Then, there exists $C>0$, depending only on $c_1$ and $c_2$, and  $\nu_0=\cO(1)>0$ such that  
\beq \label{eq:big-l-rouche-p1}
\bigabs{\frac{1}{\nu z}  \left(\frac{(\nu z)^{2}}{\epsilon^{2}\We^{-1}\left(\nu^{2}-9/4\right)} - \frac{1}{2}\right)} \le \frac{C}{\epsilon^2 \We^{-1}\;\nu},\ \ \nu\ge\nu_0.
\eeq
\item There exists $\nu_{0}>0$ and $c_{3}>0$ such that for all $\nu\ge \nu_{0}$ and $\abs{\arg z} < \pi$
%\ednote{ASK: We must use this claim in 3rd quadrant for curve around last res on imag axis. Also, its validity in 1st quad saves us some work and description of constants (kappa, etc).} 
with $\abs{z}\ge c_{3}$,
\beq \label{eq:big-l-rouche-p2}
\bigabs{\frac{H_{\nu}^{(1)}(\nu z)}{\D H_{\nu}^{(1)}(\nu z)} } \le C \nu^{1/3} \bigabs{\frac{Ai(\eta)}{\D Ai(\eta)}},\ \ \ where
\eeq
\begin{align}
\eta &= e^{2\pi i/3}\nu^{2/3}\zeta,\qquad  
\frac{2}{3} \zeta^{3/2} = \int_{z}^{1}\,\frac{\sqrt{1-t^{2}}}{t}\,dt = \log \frac{1+\sqrt{1-z^{2}}}{z} - \sqrt{1-z^{2}}\ ;\  \label{eq:zetadef}
\end{align}
see also Sections~\ref{app:unifhankel} and~\ref{app:hankel-zeros}.
\end{romannum}
\end{claim}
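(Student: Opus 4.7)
The plan is to establish Claim~\ref{claim:factors} by treating the two parts separately: Part (i) follows from elementary algebra, while Part (ii) requires Olver's uniform Airy-type asymptotic expansion for Hankel functions of large order.

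For Part (i), I apply the triangle inequality to obtain
\begin{equation*}
\left|\frac{1}{\nu z}\left(\frac{(\nu z)^{2}}{\epsilon^{2}\We^{-1}(\nu^{2}-9/4)}-\frac{1}{2}\right)\right| \le \frac{\nu|z|}{\epsilon^{2}\We^{-1}(\nu^{2}-9/4)} + \frac{1}{2\nu|z|}.
\end{equation*}
Choosing $\nu_0 \ge 3$ so that $\nu^2-9/4 \ge \tfrac{1}{2}\nu^2$ and using $c_1 \le |z| \le c_2$, the first summand is at most $2c_2/(\epsilon^2\We^{-1}\nu)$ and the second at most $1/(2c_1\nu)$. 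In the physical regime of interest $\epsilon^2\We^{-1}$ is bounded above, so the second summand is also dominated by a constant multiple of $1/(\epsilon^2\We^{-1}\nu)$, and combining yields~\eqref{eq:big-l-rouche-p1}.

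For Part (ii), the principal tool is Olver's uniform Airy-type asymptotic expansion~\cite{Olver:1954lr}, which states that for large $\nu$ and uniformly on any closed subsector of $|\arg z|<\pi$ bounded away from $z=0$,
\begin{align*}
H_\nu^{(1)}(\nu z) &= 2e^{-i\pi/3}\left(\frac{4\zeta}{1-z^{2}}\right)^{1/4}\nu^{-1/3}\Bigl[Ai(\eta)\bigl(1+\cO(\nu^{-2})\bigr) + \cO(\nu^{-4/3})\,\D Ai(\eta)\Bigr],\\
\D H_\nu^{(1)}(\nu z) &= -\frac{2e^{i\pi/3}}{z}\left(\frac{1-z^{2}}{4\zeta}\right)^{1/4}\nu^{-2/3}\Bigl[\D Ai(\eta)\bigl(1+\cO(\nu^{-2})\bigr) + \cO(\nu^{-4/3})\,Ai(\eta)\Bigr],
\end{align*}
with $\eta = e^{2\pi i/3}\nu^{2/3}\zeta$ as in~\eqref{eq:zetadef}. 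Forming the quotient, the $\nu$-powers combine to yield the factor $\nu^{1/3}$ claimed in~\eqref{eq:big-l-rouche-p2}, and the geometric prefactor collapses to
\begin{equation*}
-z\, e^{-2i\pi/3}\sqrt{4\zeta/(1-z^2)},
\end{equation*}
which is analytic on the relevant $z$-region and remains bounded across the turning point $z=1$ (where $\zeta$ and $1-z^2$ vanish at the same rate, producing a removable singularity with a nonzero limit). For $\nu\ge\nu_0$ sufficiently large the $(1+\cO(\nu^{-2}))$ factors are uniformly bounded, while the cross-terms $\cO(\nu^{-4/3})$ are absorbed into the leading Airy ratio through a standard geometric-series argument. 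Taking $C$ to be the supremum of the prefactor on the relevant $z$-region gives~\eqref{eq:big-l-rouche-p2}.

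The main obstacle is the uniformity of the bound in $z$. Olver's explicit remainder estimates together with the analyticity of the map $z\mapsto\zeta(z)$ control the expansion on any closed subsector $\{|\arg z|\le\pi-\delta,\ c_3\le |z|\le C_3\}$, which is precisely what is needed for the Rouch\'e argument of Section~\ref{sec:l+1res}: the zeros $z_{\nu,s} = y_{\nu,s}'/\nu$ accumulate in a bounded neighborhood of $z=1$, so the curve $\gamma_z$ lies in such a subsector. Extending the bound to $|z|\to\infty$ (not needed for the application) would require matching Olver's expansion to the Debye asymptotic for large argument, but this can be subsumed into the constant $C$ by compactness once the admissible $z$-region is restricted. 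The verification of (a) the uniform error bounds via Olver's estimates on the coefficient functions $A_s(\zeta), B_s(\zeta)$, and (b) the regularity of $(4\zeta/(1-z^2))^{1/2}$ at the turning point, constitutes the technical core of the argument.
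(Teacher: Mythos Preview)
Your proposal is correct and follows essentially the same approach as the paper: elementary estimation for Part~(i) (the paper simply calls it ``straightforward''), and for Part~(ii) Olver's uniform Airy-type asymptotics for $H_\nu^{(1)}(\nu z)$ and $\D H_\nu^{(1)}(\nu z)$, forming the ratio to extract the $\nu^{1/3}$ factor and bounding the geometric prefactor $z\bigl(\zeta/(1-z^2)\bigr)^{1/2}$ via the Taylor expansion of $\zeta(z)$ near the turning point $z=1$. Your treatment is in fact slightly more careful than the paper's, since you explicitly track the $\cO(\nu^{-4/3})$ cross terms in the Olver expansion (the paper just writes $\sim$ and absorbs everything into a $(1\pm\delta)$ factor) and you correctly flag that the second summand in Part~(i) requires $\epsilon^2\We^{-1}$ to be bounded above---an implicit assumption in the small-Mach-number regime that the paper does not spell out.
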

\begin{claim} \label{thm:big-l-rouche}
Let $q\in(0,1/3)$ and denote by $\eta_{s}'$ the $s$th solution of $\D Ai(\eta)=0$; see Section~\ref{sec:airy0}.\ 
There exists $\nu_{1}>0,\ \kappa>0$ such that if $\nu\ge\nu_{1}$ and  $s\in\set{ 1,2,\dotsc, \floor{(l+1)/2}+1 }$,  then 
\beq \label{eq:big-l-rouche-p3}
\bigabs{\frac{Ai(\eta)}{\D Ai(\eta)}} \le  C \nu^{1/3+q}, \qquad \text{on the circle } C_{\eta}=\bigset{\eta:\, \bigabs{\eta-\eta_{s}'} = \frac{\kappa}{2} s^{-1/3} \nu^{-q}},
\eeq
chosen to enclose exactly one zero of $\D Ai$; see Equation~\eqref{eq:big-etap-spacing}. 
\end{claim}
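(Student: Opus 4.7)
The plan is to bound $|Ai(\eta)/\D Ai(\eta)|$ on $C_\eta$ by Taylor expanding $\D Ai$ about its simple zero $\eta_s'$ and using the Airy equation $\D^2 Ai(\eta)=\eta\,Ai(\eta)$ to rewrite $\D^2 Ai(\eta_s')$ as $\eta_s'\,Ai(\eta_s')$. The estimate then reduces to the asymptotic size of $\eta_s'$ and to the radius $\tfrac{\kappa}{2}\,s^{-1/3}\nu^{-q}$ of $C_\eta$ being small compared with the spacing of consecutive zeros of $\D Ai$.

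First I would verify the geometric setup. The consecutive-spacing asymptotic $|\eta_{s+1}'-\eta_s'|\ \ge\ c\,s^{-1/3}$ (equation~\eqref{eq:big-etap-spacing}) exceeds the diameter $\kappa\,s^{-1/3}\nu^{-q}$ of $C_\eta$ once $\kappa$ is chosen small and $\nu\ge\nu_1$, so $C_\eta$ encloses only $\eta_s'$ and $\D Ai$ is non-vanishing on $C_\eta$. Since $\D Ai(\eta_s')=0$ and the zero is simple, Taylor expansion combined with the Airy ODE yields
\begin{equation*}
\D Ai(\eta)\ =\ \eta_s'\,Ai(\eta_s')\,(\eta-\eta_s')\ +\ \cO\!\bigl(|\eta-\eta_s'|^2\bigr),
\end{equation*}
where higher Taylor coefficients $\D^{k}Ai(\eta_s')$ are computed by iterating $\D^{2}Ai=\eta\,Ai$ and evaluating at $\eta_s'$ (using $\D Ai(\eta_s')=0$); the implicit constant depends polynomially on $\eta_s'$. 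On $C_\eta$ the radius tends to $0$ as $\nu\to\infty$, so the linear term dominates and
\begin{equation*}
|\D Ai(\eta)|\ \ge\ \tfrac14\,\kappa\,|\eta_s'|\,|Ai(\eta_s')|\,s^{-1/3}\nu^{-q},\qquad |Ai(\eta)|\ \le\ 2\,|Ai(\eta_s')|.
\end{equation*}
Dividing these two estimates and using the standard asymptotic $|\eta_s'|\sim(\tfrac{3\pi}{8}(4s-3))^{2/3}$ (Section~\ref{sec:airy0}) together with $|\eta_1'|$ bounded away from $0$, which jointly give $s^{1/3}/|\eta_s'|\le C$ uniformly in $s\ge 1$, produces
\begin{equation*}
\left|\frac{Ai(\eta)}{\D Ai(\eta)}\right|\ \le\ \frac{8\,s^{1/3}\,\nu^{q}}{\kappa\,|\eta_s'|}\ \le\ C\,\nu^{q}\ \le\ C\,\nu^{1/3+q},\qquad \eta\in C_\eta,
\end{equation*}
which is the desired bound~\eqref{eq:big-l-rouche-p3}.

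The main technical obstacle is uniform control of the Taylor remainder in $s$. Since $|\eta_s'|\sim s^{2/3}$ grows while $|\eta-\eta_s'|\sim s^{-1/3}\nu^{-q}$ shrinks, one must check that higher-order terms are genuinely dominated by the linear one \emph{uniformly} in $s\in\{1,\dots,\lfloor(l+1)/2\rfloor+1\}$. Each application of $\D^{2}Ai=\eta\,Ai$ at $\eta_s'$ costs at most an additional factor of $|\eta_s'|$, while each power of $|\eta-\eta_s'|$ contributes $s^{-1/3}\nu^{-q}$; the potentially dangerous product $|\eta_s'|\cdot s^{-2/3}\sim 1$ is neutralised by the overall $\nu^{-q}$ gain, so the remainder is $o(1)$ relative to the leading term once $\nu\ge\nu_1$. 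Note that the bound I obtain is in fact stronger ($\nu^{q}$) than what the claim asserts ($\nu^{1/3+q}$); the extra $\nu^{1/3}$ slack is exactly the margin used, together with the $\nu^{1/3}$ prefactor in Claim~\ref{claim:factors}(ii), to close the Rouch\'e inequality $|g/f|<1$ in the subsequent subsection, which is why the weaker exponent suffices for our purposes.
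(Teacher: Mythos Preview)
Your proposal is correct and follows essentially the same route as the paper: expand $\D Ai$ about its simple zero $\eta_s'$, use the Airy equation to identify the leading linear term $\eta_s'\,Ai(\eta_s')(\eta-\eta_s')$, and verify the circle $C_\eta$ is small enough (relative to the zero spacing $\sim s^{-1/3}$) that this term dominates. The only methodological difference is in how the remainder is controlled: the paper introduces the analytic remainders $A_1,A_2$ and bounds them by the Cauchy integral formula on the larger circle $|\eta-\eta_s'|=\kappa s^{-1/3}$, combined with the asymptotic $|Ai(\eta_s')|\sim s^{-1/6}$, to get $|A_1|\le C s^{1/2}$, $|A_2|\le C s^{5/6}$; you instead sketch the same control by iterating $\D^2 Ai=\eta\,Ai$ at $\eta_s'$. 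Both arguments close with the same $\nu^{-q}$ gain, and your observation that one actually obtains $C\nu^q$ rather than $C\nu^{1/3+q}$ is correct --- the paper uses only the crude lower bound $|\eta_s'|\ge c$ in the prefactor $1/|\eta_s'(\eta-\eta_s')|$, giving $Cs^{1/3}\nu^q\le C\nu^{1/3+q}$, whereas you exploit $|\eta_s'|\sim s^{2/3}$ to cancel the $s^{1/3}$.
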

%\medskip

Inequality~\eqref{eq:big-l-rouche}  is a direct consequence of~\eqref{eq:big-l-rouche-p1},~\eqref{eq:big-l-rouche-p2}, and~\eqref{eq:big-l-rouche-p3} of Claims~\ref{claim:factors} and~\ref{thm:big-l-rouche}.

%\bigskip

Assuming \expandafter \MakeUppercase claims~\ref {claim:factors} and~\ref {thm:big-l-rouche}, we now prove the assertion of Part 1 of Theorem~\ref{thm:big-l}, concerning $l+1$ of the $l+2$ resonances. (The $(l+2)$nd is covered by a separate argument below; see Section~\ref{sec:1res}.)

First, fix the parameters $q$, $\nu_{1}$, and $\kappa$ as in Claim~\ref{thm:big-l-rouche} and choose $\check{\eta}\in C_{\eta}$, so that the upper bound in~\eqref{eq:big-l-rouche-p3} holds.    For such $\check{\eta}$, by the change of variables~\eqref{eq:zetadef}, $z(\check{\eta})$ varies in an annulus $0<c_1\le |z|\le c_2$, where $0<c_1< c_2<\infty$ depend on $\kappa$ and $\nu_{1}$.

By~\eqref{eq:big-l-rouche-p3}, we write $\check{\eta}\in C_{\eta}$ as
\beq
\check{\eta} = \eta_{s}' + \frac{\kappa}{2} s^{-1/3} \nu^{-q} e^{i\vartheta},\ \qquad
0\le\vartheta<2\pi.
\eeq
Under the mapping $\eta\mapsto \zeta$,~\eqref{eq:zetadef}, the circle $C_{\eta}$ is transformed into the circle $C_{\zeta}$: 
\beq \label{eq:C-zeta}
\zeta(\check{\eta}) = \nu^{-2/3}e^{-2\pi i /3}\left(\   \eta_{s}' +  \frac{\kappa}{2} s^{-1/3} \nu^{-q} e^{i\vartheta} \right) \in C_{\zeta}.
\eeq
 Finally, we map $C_{\zeta}$ to the curve $\gamma_{z}$ in the $z$-plane using a formula from~\cite{Olver:1954lr}
\beq % 	\label{eq:zzeta}
z(\zeta) = 1 - 2^{-1/3} \,\zeta + \tfrac{3}{10} 2^{-2/3}\, \zeta^{2} + \tfrac{1}{700}\, \zeta^{3} + \zeta^{4} \bigO(1),\ \ \ \zeta\ {\rm small}. \tag{\ref{eq:zzeta}}
\eeq
%.
 Thus, to leading order, $\gamma_{z}$ is a still a circle:
\beq \label{eq:gamma-z}
\gamma_{z} = z(C_{\zeta}) = 1 - 2^{-1/3}\, C_{\zeta} + \bigO\!\left( \left[ \eta_{s}' \nu^{-2/3} \right]^{2} \right)
\eeq
with radius the same as that of $2^{-1/3}\,C_{\zeta}$, namely $2^{-4/3}\kappa \, s^{-1/3} \nu^{-2/3-q}$. Since this radius is decreasing in both $s$ and $\nu$, it can be bounded uniformly.  Using this uniform bound, we can find fixed $c_{1}$ and $c_{2}$ defining an annulus which encloses $\gamma_{z}$ for all values of $s$ and $\nu$. 

Additionally, if we take $y_{\nu,s}'$ to be a zero of $\D H_{\nu}$ corresponding to $\eta_{s}'$, and approximated by Equation~(\ref {eq:dH-zero}) with $k=s$, then $\nu^{-1} y_{\nu,s}'$ is inside the curve $\gamma_{z}$ defined by Equation~(\ref {eq:gamma-z}).  It is equivalent to show that when mapped into the $\zeta$  variable, the image of $\nu^{-1} y_{\nu,s}'$ lies in $C_{\zeta}$. From Equation~(\ref {eq:dH-zero}), we see the image of $\nu^{-1} y_{\nu,s}'$ lies in a disk of radius $\sim\nu^{-4/3}$ which is smaller than the radius of $C_{\zeta}$.
With this choice of $c_1$ and $c_2$, we have, using the definitions in~\eqref{eq:fplusg} and \expandafter \MakeUppercase claims~\ref {claim:factors} and~\ref {thm:big-l-rouche}, that 
\begin{equation}
\left|\ \frac{g(z)}{f(z)}\ \right|\ \le\ C\ \frac{\We}{\epsilon^2 l}\times l^{1\over3}\  l^{\frac{1}{3}+q}, \nn
\end{equation}
Therefore, there exists $K_1>0$ such that for any $q\in(0,1/3)$, if 
$
l\ge K_1 \!\left(\epsilon^{-2}\We\right)^{3/(1-3q)}  
$
then $ \abs{f(z)/g(z)} < 1$ on the required curve $\gamma_{z}$. The largest range of $l$-values is obtained by taking $q=\delta>0$ and small and thus $l\ge K_1 \!\left(\epsilon^{-2}\We\right)^{3+\cO(\delta)}$.

Since we have shown $\D H_\nu^{(1)}(\nu z)=0$ has one zero inside $\gamma_{z}$, by Rouch\'e's Theorem there is also a single solution, $z_{\nu,s}$, of~\eqref{eq:fplusg}, inside $\gamma_{z}$.
 To finish the proof of Theorem~\ref{thm:big-l} Part 1, we compute $z_{\nu,s}$ and map it to the $\lambda$-plane.  Since $z_{\nu,s}$ is inside $\gamma_{z}$, we use Equation~(\ref {eq:zzeta}) evaluated at~\eqref{eq:C-zeta} for the approximation.
We find:
\beq
z_{\nu,s} = 1 + 2^{-1/3} e^{-2\pi i/3} (l+1/2)^{-2/3}  \bigabs{\eta_{s}'} + \tfrac{3}{10} 2^{-1/3} e^{-4\pi i /3} \nu^{-4/3}\bigabs{\eta_{s}'}^{2} + \bigO\Bigl(\! s^{-1/3} \nu^{-2/3-q}\! \Bigr).
\eeq
%%%%%%%%%%%%%%%%%%%%%%%%%%%%

Mapping back to $\lambda$, using~\eqref{lam2z} completes the proof.

\emph{Proof of Claim~\ref{claim:factors}}.
Estimate~\eqref{eq:big-l-rouche-p1} is straightforward. 
To prove  estimate~\eqref{eq:big-l-rouche-p2} we use the following leading order asymptotics of  Hankel functions, uniform in $z$ and valid for large $\nu$  (see Appendix~\ref {app:unifhankel})
\begin{alignat}{2}
H_{\nu}^{(1)}(\nu z) &\sim 2 e^{-\pi i/3} \left(\frac{4\zeta}{1-z^{2}}\right)^{1/4} \frac{Ai(\eta)}{\nu^{1/3}}, &\quad \abs{\arg z} < \pi, \tag{\ref{eq:Hnu-nuz}} \\
\D H_{\nu}^{(1)}(\nu z) &\sim -4 e^{\pi i/3} \frac{1}{z} \left(\frac{1-z^{2}}{4\zeta}\right)^{1/4} \frac{\D Ai(\eta)}{\nu^{2/3}}, &\abs{\arg z} < \pi. \tag{\ref{eq:dHnu-nuz}}
\end{alignat}
In particular, for any fixed $0<\delta<1$ and sufficiently large  $\nu$ 
\beq
\bigabs{\frac{H_{\nu}^{(1)}(\nu z)}{\D H_{\nu}^{(1)}(\nu z)} }\ \le\ \frac{1+\delta}{1-\delta}\  \nu^{1/3}\  \bigabs{ z \left(\frac{\zeta}{1-z^{2}}\right)^{1/2}\frac{Ai(\eta)}{\D Ai(\eta)}}.
\eeq
To show this is bounded near $z=1$, we use the Taylor series of $\zeta(z)$ about $z=1$, as given in Appendix~\ref {app:unifhankel}:
\begin{align}
\zeta &= - e^{\pi i/3} 2^{1/3} (z-1) \left[ 1 + \bigO \left( z-1 \right) \right],
\end{align}
so, due to the boundedness of $z$,
$
\bigabs{ z \left(\frac{\zeta}{1-z^{2}}\right)^{1/2}} \le C.
$
This completes the proof of~\eqref{eq:big-l-rouche-p2}, and therewith Claim~\ref{claim:factors}.
\qquad\endproof

%%%%%%%%%%%
\emph{Proof of Claim~\ref{thm:big-l-rouche}}.
Since $\D Ai(\eta_s')=0$, Taylor expansion of $Ai$ and $\D Ai$ about $\eta=\eta_s'$ implies the existence of analytic functions  $A_{1}$ and $A_{2}$, such that 
\begin{align}
Ai(\eta) &= Ai(\eta_{s}') + A_{1}(\eta) (\eta-\eta_{s}')^{2} 	\label{eq:A1def}  \\
%Ä
\D Ai(\eta) &= \eta_{s}' Ai(\eta_{s}') (\eta-\eta_{s}') + A_{2}(\eta) (\eta-\eta_{s}')^{2}.		\label{eq:A2def}
\end{align}
To obtain~\eqref{eq:A2def} we also used that $\D^2 Ai(\eta)=\eta Ai(\eta)$.  It follows that
\begin{align}
\frac{Ai(\eta)}{\D Ai(\eta)} &= \frac{Ai(\eta_{s}')+ A_{1}(\eta)(\eta-\eta_{s}')^{2}}{\eta_{s}' Ai(\eta_{s}')(\eta-\eta_{s}') + A_{2}(\eta)(\eta-\eta_{s}')^{2}} 
%
%= \frac{1}{\eta_{s}'(\eta-\eta_{s}')}\, \frac{1 +  Ai(\eta_{s}')^{-1} A_{1}(\eta)(\eta-\eta_{s}')^{2}}{1 + \left[ \eta_{s}' Ai(\eta_{s}')\right]^{-1} A_{2}(\eta)(\eta-\eta_{s}')}\ 
 \equiv \frac{1}{\eta_{s}'(\eta-\eta_{s}')}\, a_{\nu}'(\eta)  \label{eq:anup}
\end{align}
Thus, we now estimate $a_{\nu}'(\eta)$. Using the domain specified in Equation~(\ref {eq:big-l-rouche-p3}), we have the bound
\beq
\bigabs{\frac{1}{\eta_{s}'(\eta-\eta_{s}')} } \le C s^{1/3} \nu^{q}.
\eeq
We complete the proof of Claim~\ref{thm:big-l-rouche} 
  by showing $\abs{a_{\nu}'(\eta)}\le C$ on a circle around $\eta_{s}'$ that is small enough such that it encloses no other $\D Ai(\eta)$ zero. The specific circle used is defined by
\begin{equation}
\bigabs{\eta-\eta_{s}'} = C s^{-1/3} \nu^{-q} \nn
\end{equation}
and is determined by the spacing between consecutive zeros of $\D Ai(\eta)$; see Equations~(\ref {eq:big-etap-spacing}) and~(\ref {eq:small-etap-spacing}). There are two cases to study:
%
%\begin{itemize}
\begin{remunerate}
% large Ai' case
\item[$\bullet$]
First, consider the case $s\ge s_{1}' \ge s_{0}'\ge 1$ ($s_{0}'$ and $s_{1}'$ independent of $l$). Using~\eqref{eq:DAiryzeros}, giving the locations of the larger $\D Ai(\eta)$ zeros, $\set{\eta_{s}'|\, s\ge s_{0}'}$, we estimate the spacing between the zeros as
\begin{align}     	
\bigabs{\eta_{s+1}'-\eta_{s}'} &= \tfrac{2}{3} \left( \tfrac{3\pi}{2} \right)^{2/3} s^{-1/3} + \bigO\!\left( s^{-4/3} \right) \\
\intertext{so that the spacing satisfies the lower bound}
\bigabs{\eta_{s+1}'-\eta_{s}'} &\ge \kappa \, s^{-1/3}, \quad s\ge s_{0}'  \label{eq:big-etap-spacing}
\end{align}
where $\kappa = (3\pi/2)^{2/3}/3\approx0.93693$. So, first, we consider the open disk $\abs{\eta-\eta_{s}'} < \frac{\kappa}{2} s^{-1/3}$, which clearly contains only one zero of $\D Ai$.

Now we estimate the $A_1$ and $A_2$, appearing in~\eqref{eq:anup}. Using the Cauchy integral formula and $A_{1}(\eta)$ obtained from~\eqref{eq:A1def} we have
\begin{align}
A_{1}(\eta) &= \frac{1}{2\pi i} \oint_{\abs{t-\eta_{s}'}=\kappa s^{-1/3}} \frac{A_{1}(t)}{t-\eta} \, dt   \ =\ \frac{1}{2\pi i} \oint_{\abs{t-\eta_{s}'}= \kappa s^{-1/3}} \frac{Ai(t)-Ai(\eta_{s}')}{(t-\eta_{s}')^{2}(t-\eta)} \, dt.
\end{align}

Using the asymptotic expansion of $Ai(\eta)$ on the negative real axis from Appendix~\ref{app:airy} and the location of the $\eta_{s}'$ from~\eqref{eq:Airyzeros} we have
\beq
C_{0} s^{-1/6} \le \sup_{\abs{\eta-\eta_{s}'} \le \kappa s^{-1/3}} \abs{Ai(\eta)} \le C_{1} s^{-1/6}, \quad s\ge s_{1}'.   \label{eq:Aibound}
\eeq 
This allows the estimate for $\abs{\eta-\eta_{s}'} < \kappa s^{-1/3}$:
\begin{align*}
\abs{A_{1}(\eta)} &\le \frac{1}{2\pi} \sup_{\abs{\bar{\eta}-\eta_{s}'}= \kappa s^{-1/3}} \frac{2\abs{Ai(\bar{\eta})}}{(\kappa s^{-1/3})^{2}} \oint_{\abs{t-\eta_{s}'} = \kappa s^{-1/3}} \frac{1}{\abs{t-\eta}} \, \abs{dt}, \\
&\le C \frac{s^{-1/6}}{(\kappa s^{-1/3})^{2}} \oint_{\abs{t-\eta_{s}'}=\kappa s^{-1/3}} \frac{1}{\abs{t-\eta}} \, \abs{dt}, \;\,\qquad\qquad\text{by equation~(\ref {eq:Aibound})}, \\
&\le C \frac{s^{-1/6}}{(\kappa s^{-1/3})^{2}} \; \frac{1}{\kappa s^{-1/3}-\abs{\eta-\eta_{s}'}} \oint_{\abs{t-\eta_{s}'}=\kappa s^{-1/3}}\hspace{-3em} 1 \, \abs{dt}, \qquad \text{by the triangle inequality,} \\
&\le C \frac{s^{-1/6}}{\kappa s^{-1/3}} \;  \frac{1}{\kappa s^{-1/3}-\abs{\eta-\eta_{s}'}}.
\end{align*}

Next, we apply the last inequality to the \emph{smaller} circle $\abs{\eta-\eta_{s}'}=C s^{-1/3} \nu^{-q}$ we can estimate 
\begin{align}
\abs{A_{1}(\eta)} &\le C \frac{s^{-1/6}}{(\kappa s^{-1/3})^{2}} \; \frac{1}{1-\abs{\eta-\eta_{s}'}/(\kappa s^{-1/3})}
\ \le\  C s^{1/2} \; \frac{1}{1-C\nu^{-q}}\ \le\ C s^{1/2}.\label{eq:A1bnd}
\end{align}
Similarly,
\beq
\abs{A_{2}(\eta)} \le C s^{5/6}.\label{eq:A2bnd}
\eeq

The full numerator of~\eqref{eq:anup} can be estimated, using~\eqref{eq:Aibound} and~\eqref{eq:A1bnd},
\begin{align*}
\bigabs{1 +  Ai(\eta_{s}')^{-1} A_{1}(\eta)(\eta-\eta_{s}')^{2}} &\le 1 + \bigabs{Ai(\eta_{s}')}^{-1} \abs{A_{1}(\eta)} \bigabs{\eta-\eta_{s}'}^{2}\\ 
&\le 1 + C s^{1/6} \, Cs^{1/2} \left( Cs^{-1/3}\nu^{-q} \right)^{2} \le 1 + C \nu^{-2q}.
\end{align*}
For the denominator, using~\eqref{eq:DAiryzeros},~\eqref{eq:Aibound} and~\eqref{eq:A2bnd},
\begin{align*}
\bigabs{\left[ \eta_{s}' Ai(\eta_{s}')\right]^{-1} A_{2}(\eta)(\eta-\eta_{s}')} &\le \bigabs{\eta_{s}'}^{-1} \bigabs{Ai(\eta_{s}')}^{-1} \bigabs{A_{2}(\eta)} \bigabs{\eta-\eta_{s}'}\\
& \le C s^{-2/3}\; Cs^{1/6} \; C s^{5/6} \; Cs^{-1/3} \nu^{-q} \ \le\ C \nu^{-q}.
\end{align*}
Therefore, 
$
\bigabs{1 + \left[ \eta_{s}' Ai(\eta_{s}')\right]^{-1} A_{2}(\eta)(\eta-\eta_{s}')}^{-1} \le 1 + C\nu^{-q}$. Combining these estimates, we find $\abs{a_{\nu}'}\le C$.
%
%%%%%%%%%%%%
\item[$\bullet$]
The case $1\le s<s_{1}'$ is handled similarly. Since we do not have an explicit  approximate formula for the zeros of $\D Ai$ zeros, we introduce the among the first  $s_1'$ zeros
\beq		\label{eq:small-etap-spacing}
2 \delta' = \min_{1\le s\le s_{1}'} \bigabs{\eta_{s+1}'-\eta_{s}'}.
\eeq
For $1\le s<s_{1}'$, define $A_{1}^{s}$ and $A_{2}^{s}$ via~\eqref{eq:A1def} and~\eqref{eq:A2def}. Then,
\beq
a_{\nu}' = \frac{1 + Ai(\eta_{s}')^{-1} A_{1}^{s}(\eta)(\eta-\eta_{s}')^{2}}{1 + \eta_{s}' Ai(\eta_{s}')^{-1} A_{2}^{s}(\eta)(\eta-\eta_{s}')}.
\eeq
In the disk $\abs{\eta-\eta_{s}'} \le \delta' \nu^{-q}$ (in the theorem $Cs^{-1/3}=\delta'$) we use the Cauchy integral formula to make the estimates
$
\abs{A_{1}^{s}(\eta)} \le C $ and $ \abs{A_{2}^{s}(\eta)} \le C
$,
which give
$
\abs{a_{\nu}'} \le C.\nn
$
\end{remunerate}
%\end{itemize}
%
This completes the proof of Claim~\ref{thm:big-l-rouche} and therewith, Part 1 of Theorem~\ref{thm:big-l}.
\qquad\endproof

\subsection{Proof of  Theorem~\ref{thm:big-l} Part 2}\label{sec:1res}
%%%%%%%%%%% Imaginary axis resonance
We now discuss the single resonance, away from the arc. 
 As before, we begin by seeking  solutions of Equation~(\ref {eq:deformation-eqn}) for large $l$ using the asymptotics in Equations~(\ref {eq:Hnu-nuz}) and~(\ref {eq:dHnu-nuz}). However, we now seek zeros in a domain not associated with zeros of $\D Ai(\eta)$. In particular, we focus where $\abs{\eta}$ is large and $\abs{\arg \eta} < \pi$.  Using the leading term of the asymptotic approximation to Airy functions in Equations~(\ref {eq:Airy-large-arg}) and~(\ref {eq:Airyp-large-arg}) and Equation~(\ref {eq:zetadef}) we  obtain the approximate equation
\beq
\nu^{2}z^{2} - \frac{ \epsilon^{2}}{2 } \frac{1}{\We } \left( \nu^{2}-\frac{ 9}{4 } \right) = - \epsilon^{2} \nu \frac{1}{\We }  \left( \nu^{2}-\frac{ 9}{4 } \right) \left( 1-z^{2} \right)^{1/2}
\eeq
where $\lambda=\nu z/\epsilon$.
For $\nu\gg 3/2$, we can drop the terms involving $9/4$, square both sides, and write an approximate polynomial equation for the resonances in $z$:
\beq
z^{4} + \nu^{2} \epsilon^{4} \left(\tfrac{1}{\We}\right)^{2} z^{2} - \epsilon^{2} \left(\tfrac{1}{\We}\right) z^{2} - \nu^{2} \epsilon^{4} \left(\tfrac{1}{\We}\right)^{2} = 0.
\eeq
All neglected terms are of higher order in $\epsilon$ and will be absorbed into the final error.

We identify the large parameter $N=\frac{\epsilon^{2}\nu}{\We}$, set $\alpha=\frac{\epsilon^{2}}{\We}$ and obtain:
\beq
z^{4}+N^{2}z^{2}-\alpha^{2}z^{2}-N^{2} = 0.
\eeq
In the regime%%
%\ednote{NEW:ASK: K1 should be K2 and We was included in hypothesis bound so shouldn't appear with N.   OLD:ASK: It might be desirable to change the theorem to the region $l\epsilon^{2}\We\ge K_{1}$ }
%%
, $l\epsilon^{2}\We^{-1}\ge K_{2}$, we have $N\ge K_{2}$, and the dominant balance is between the first two terms. Choosing a solution in the lower half plane leads to $z\approx -i N$. Accordingly, we look for a solution in the form
\beq
z(N) = -i N + z_{0} + z_{-1} N^{-1} + z_{-2} N^{-2} + \bigO\!\left( N^{-3} \right).
\eeq
Iteratively, we compute $z_{0}=0$, $z_{-1}=i(\alpha-1)/2$, and $z_{-2}=0$. Changing variables back to $\lambda$ leads to the desired result.
 This completes the proof of Theorem~\ref{thm:big-l}.
%% end section 9 def-res-nosmall-l
%%%%%%%%%%%%%
%\clearpage

\appendix
\section{Spherical coordinates} \label{app:spherical}

\noindent {\bf Spherical coordinates in $\R^3$:}
\begin{equation}
x = r\sin\theta\cos\phi,\ \ y=r\sin\theta\sin\phi,\ \ z=r\cos\theta
\label{spherical-coord}
\end{equation}

\noindent {\bf Standard orthonormal frame}
\begin{equation}
\unitr= \begin{pmatrix} 
\sin\theta \cos\phi\\
 \sin\theta \sin\phi\\
 \cos\theta
 \end{pmatrix}
 ,\ \ \unittheta=
 \begin{pmatrix} 
\cos\theta \cos\phi\\
 \cos\theta \sin\phi\\
 -\sin\theta
 \end{pmatrix} = -\D_\theta\unitr
 ,\ \ \unitphi=
 \begin{pmatrix} 
-\sin\theta \sin\phi\\
 \sin\theta \cos\phi\\
0
 \end{pmatrix} = \D_\phi\unitr
\label{standardS2}\end{equation}

\noindent {\bf $\vect{grad},\ \vect{div}, \Delta$ in spherical coordinates :}
\begin{align}
{\vect{grad}}\ {f}\ &=\ \grad{f} = 
\frac{\partial f}{\partial {r}}\unitr + \frac{1}{r}\frac{\partial f}{\partial {\theta}}\unittheta + \frac{1}{r\sin \theta}\frac{\partial f}{\partial {\phi}}\unitphi \label{grad-polar}\\
{\rm div}\ \vect{F} &= \div\vect{F} = \frac{1}{r^{2}}\frac{\partial}{\partial{r}}(r^{2}F_{r}) + \frac{1}{r\sin\theta}\frac{\partial}{\partial{\theta}}(\sin\theta F_{\theta}) + \frac{1}{r\sin\theta}\frac{\partial}{\partial\phi}F_{\phi} \label{div-polar} \\
 {\rm div}\ {\vect{grad}} f &= \Delta f 
= \Delta_{r}f + \frac{1}{r^{2}}\Delta_{S}f\nn\\
\Delta_r\ f &= \frac{1}{r^{2}}\ \frac{\partial }{\partial{r}}\left(r^{2}\ \frac{\partial f}{\partial{r}}\right),\ \ {\rm radial\ Laplacian}\label{Delta_r}\\
\Delta_S\ f\ &= \left[\frac{1}{\sin\theta}\ \frac{\partial}{\partial\theta}\left(\sin\theta\ \frac{\partial f}{\partial\theta}\right) + \frac{1}{\sin^{2}\theta}\ \frac{\partial^{2} f }{\partial\phi^{2}}\right],\ 
 {\rm Laplacian\ on}\ S^2\label{Delta_S}
\end{align}

\noindent {\bf Surface and Volume elements:}
$
d\Omega = \sin\theta\, d\theta\, d\phi,\ \  
d\bx = r^2\sin\theta\ dr\, d\theta\, d\phi =  r^2\, dr\, d\Omega
$
%
%%%%%%%%%%%%%%%%%%%%%%%%%%%%
\section{Spherical Harmonics} \label{app:Ylm}
$Y_{l}^{m}(\theta,\phi) = Y_{l}^{m}(\Omega)$, $l\ge0,\ |m|\le l$ denote the spherical harmonics. These are eigenfunctions of the Laplacian on $S^{2}$:
\beq
-\Delta_{S} Y_{l}^{m}(\theta,\phi)\ =\ l(l+1) Y_{l}^{m}(\theta,\phi).
\eeq
The functions $\{Y_l^m\}$ form a complete orthonormal system in $L^2(S^2)$:\\
$
\langle Y_l^m,Y_{l'}^{m'} \rangle_{L^2(S^2)} = \delta_{l l' ,m m'}
$
with  $L^2(S^2)$  inner product:
\beq
\langle f, g \rangle_{L^2(S^2)} =  \int_0^{2\pi}\ \int_0^\pi\ f(\theta,\phi)\ \overline{g(\theta,\phi)} \sin \theta\ d\theta\,d\phi =
 \int_{S^2} f(\Omega)\ \overline{g(\Omega)}\ d\Omega
\eeq

We shall make use of the following explicit formulae:
\begin{align}
Y_0^0(\theta,\phi) &= \frac{1}{2} \sqrt{\frac{1}{\pi}}\label{Y00}\\
Y_1^{-1}(\theta,\phi) &=  \frac{1}{2}\sqrt{\frac{3}{2\pi}} \sin\theta\ e^{-i\phi},\ \, 
Y_1^{0}(\theta,\phi) =  \frac{1}{2}\sqrt{\frac{3}{\pi}} \cos\theta,\ \, 
Y_1^{1}(\theta,\phi) =  -\frac{1}{2}\sqrt{\frac{3}{2\pi}} \sin\theta\ e^{i\phi}\label{Ypm10}
\end{align}
%end appendices AB spherical_coord
%\input{Euler}%C
% 
%
% In the fixed coordinate system we use notation macro \fixed{}
%

\section{PDEs for a bubble in a compressible fluid and linearization} \label{app:derivations}
%\subsection{Compressible Euler equations}
The inviscid, compressible fluid phase is governed by the Euler and conservation of mass equations with normal velocity and stress boundary conditions:
%\issue{ASK: Mass conservation eq has been used in Euler to remove derivatives of $\rho$.}
%
\begin{subequations}  \label{eq:euler}
\begin{align}
\vect{0} &=\D_{t} \fixed{\vect{u}} + \left( \fixed{\vect{u}} \cdot \grad \right) \fixed{\vect{u}} + \frac{1}{\fixed{\rho}} \grad \fixed{p}(\rho), && \fixed{\vect{x}}\in \R^{3}\setminus \fixed{B}(t) \label{mom} \\
0 &= \D_{\fixed{t}} \fixed{\rho} + \div (\fixed{\rho}\fixed{\vect{u}}), && \fixed{\vect{x}}\in \R^{3}\setminus \fixed{B}(t) \label{mass1}\\
\fixed{\vect{u}} \cdot \unitn &= \D_{t}  \fixed{\vect{R}} \cdot \unitn, &&   \D\fixed{B}(\fixed{t})\label{kin} \\
\left.p_{\rm bubble}\right|_{\D B(t)} - \left.p_{\rm fluid}\right|_{\D B(t)} &= \sigma\, \div\unitn, &&   \D\fixed{B}(\fixed{t}).\label{eq:dynamic}
\end{align}
\end{subequations}
The pressure within the fluid is assumed to obey an equation of state: 
%\begin{equation}
$p=p_{\rm fluid}=\fixed{p}(\fixed{\rho})$.
%\label{eofs}
%\end{equation} 
The pressure of the gas within the bubble is assumed to be 
constant and to vary inversely with bubble-volume:
\beq
p_{\rm bubble}\ =\ \fixed{P}_{\fixed{B}}(\fixed{\vect{R}}) = \frac{k}{\abs{\fixed{B}(\fixed{t})}^{\gamma}}.
%=P_{\text{eq}} \left( \frac{\frac{4\pi}{3}a^{3}}{\abs{\fixed{B}(\fixed{t})}} \right)^{\gamma}.
\eeq
For air (composed of mostly diatomic gases), the ratio of specific heats $\gamma = 1.4$. Equation~(\ref {eq:dynamic}) is called the Young-Laplace equation~\cite{Leal:2007fk} and states that the discontinuity in pressure at the interface is proportional to the mean curvature $H$ due to the relation $\div\unitn=2 H$; see Lamb~\cite{Lamb:1993mu} (Article 275, Equation 5), or do Carmo~\cite{Carmo:1976ve} (Section 3-3, Example 5).
%\medskip

Equations~(\ref {eq:euler}) have a spherically symmetric equilibrium solution:
\begin{align}
{\vect{u}} &= \vect{0},\qquad
{\vect{R}}(\theta,\phi) = a \,\unitr(\theta,\phi) \\
\fixed{p} &= p_{\infty},\qquad
\fixed{\rho} = \rho_{\infty},\ \  \ \ 
\fixed{P}_{\fixed{B}} = P_{\text{eq}}
\end{align}
where the equilibrium bubble radius, $a$ and pressure, $P_{\text{eq}}$ are given by
~\eqref{eq:Peq} .
 \medskip
 
To make the fluid compressibility more explicit, we define a variable sound speed 
$
\fixed{c}^{2} = \frac{d\fixed{p}}{d\fixed{\rho}}
$
and rewrite the continuity equation~\eqref{mass1} in terms of the pressure, $p=p(\rho)$:%
\beq
\D_tp + \grad p \cdot {\vect{u}} + {c}^{2} {\rho} \div{\vect{u}}=0.
\eeq

%
%
% In the nondimensional fixed coordinates we use \ndim{\fixed{}}
%
\subsection{Nondimensionalization of the compressible Euler equations}
We shall use the equilibrium bubble radius, $a$, as a length scale, and a typical bubble wall velocity, $U$, as a velocity scale to nondimensionalize the compressible Euler equations~\eqref{eq:euler}. In this section, we denote the nondimensional variables with subscript asterisks. Thus, the non-dimensional spatial coordinate is denoted $\ndim{\fixed{\vect{x}}}$.
%Given $r=\abs{\fixed{\vect{x}}}$ 
Now, introduce non-dimensional variables:
\begin{align}
\fixed{B} &= a\, \ndim{\fixed{B}}	& \fixed{\vect{x}} &= a \,\ndim{\fixed{\vect{x}}}	& \fixed{t} &= \frac{a}{U} \, \ndim{\fixed{t}}	& \fixed{\vect{u}}	 &= U \, \ndim{\fixed{\vect{u}}} 	& \fixed{p} &= \rho_{\infty}U^{2} \, \ndim{\fixed{p}}\\
\fixed{\D B} &= a\, \ndim{\fixed{\D B}}	& \grad_{\fixed{\vect{x}}} &= \frac{1}{a} \, \grad_{\ndim{\fixed{\vect{x}}}}	& \D_{\fixed{t}} &= \frac{U}{a} \, \D_{\ndim{\fixed{t}}}	& \fixed{\rho} &= \rho_{\infty} \, \ndim{\fixed{\rho}}	& \fixed{P_{\fixed{B}}} &= \rho_{\infty}U^{2} \, \ndim{\fixed{P}}{}_{\ndim{\fixed{B}}}
\end{align}
In terms of these non-dimensional variables, the first two equations of the Euler system become:
\begin{align}
\D_{\ndim{\fixed{t}}} \ndim{\fixed{\vect{u}}} + \left( \ndim{\fixed{\vect{u}}} \cdot \grad_{\ndim{\fixed{\vect{x}}}} \right) \ndim{\fixed{\vect{u}}} + \frac{1}{\ndim{\fixed{\rho}}} \grad_{\ndim{\fixed{\vect{x}}}} \ndim{\fixed{p}}(\ndim{\fixed{\rho}}) &= \vect{0}, && \ndim{\fixed{\vect{x}}} \in \R^{3}\setminus \ndim{\fixed{B}}(\ndim{\fixed{t}}) \label{eq:ndim-euler}\\
\D_{\ndim{\fixed{t}}} \ndim{\fixed{\rho}} + \grad_{\ndim{\fixed{\vect{x}}}}\!\cdot (\ndim{\fixed{\rho}} \ndim{\fixed{\vect{u}}}) &= 0, && \ndim{\fixed{\vect{x}}} \in \R^{3}\setminus \ndim{\fixed{B}}(\ndim{\fixed{t}}). \label{eq:ndim-mass}
\end{align}

\begin{remark}\label{remark:irrotational}
If $\grad_{\ndim{\fixed{\vect{x}}}} \wedge \ndim{\fixed{\vect{u}}}(t=0,\ndim{\fixed{\vect{x}}})=\vect{0}$, \ie{}~the initial velocity field is irrotational, it follows that $\grad_{\ndim{\fixed{\vect{x}}}} \wedge \ndim{\fixed{\vect{u}}}(t,\ndim{\fixed{\vect{x}}})=\vect{0}$ for all $t$; see~\cite{Lamb:1993mu}. In this case $\ndim{\fixed{\vect{u}}}(t,\ndim{\fixed{\vect{x}}})=\grad_{\ndim{\fixed{\vect{x}}}} \ndim{\fixed{\varphi}}(t,\ndim{\fixed{\vect{x}}})$.
\end{remark}
Alternatively, we may write the continuity equation~\eqref{eq:ndim-mass} in terms of the non-dimensional pressure, $\ndim{\fixed{p}}$, as:
\beq
\D_{\ndim{\fixed{t}}} \ndim{\fixed{p}} + \grad_{\ndim{\fixed{\vect{x}}}} \ndim{\fixed{p}} \cdot \ndim{\fixed{\vect{u}}} + {\ndim{\fixed{c}}}^{2} \ndim{\fixed{\rho}} \grad_{\ndim{\fixed{\vect{x}}}}\!\cdot \ndim{\fixed{\vect{u}}}=0.
\eeq
Here, we have introduced the non-dimensional speed:
$
\ndim{\fixed{c}}^2 = \left(\frac{c}{U}\right)^2 = \frac{d \ndim{\fixed{p}}}{d\ndim{\fixed{\rho}}}(\ndim{\fixed{\rho}}).
$
 Note that $\ndim{\fixed{p}}$, $\,\ndim{\fixed{\vect{u}}}$, and $\, \ndim{\fixed{\rho}}$ are all functions of $(\ndim{\fixed{\vect{x}}},\ndim{\fixed{t}})$.

Next, we nondimensionalize the boundary conditions. Using the substitutions above we have
\begin{align}
\ndim{\fixed{\vect{u}}} \cdot \unitn &= \D_{\ndim{\fixed{t}}}  \ndim{\fixed{\vect{R}}} \cdot \unitn, && \ndim{\fixed{\vect{x}}} \in \D\ndim{\fixed{B}}(\ndim{\fixed{t}}),	 \label{eq:ndim-kinematic}	\\
\ndim{\fixed{P}}{}_{\ndim{\fixed{B}}} - \ndim{\fixed{p}} &= \frac{1}{\We}\, \grad_{\ndim{\fixed{\vect{x}}}} \!\cdot \unitn, && \ndim{\fixed{\vect{x}}} \in \D\ndim{\fixed{B}}(\ndim{\fixed{t}}) \label{eq:ndim-dynamic}
\end{align}
where $\We$ denotes the Weber number, denoted by
\beq
\We = \frac{\rho_{\infty} a U^{2}}{\sigma} = \frac{\text{INERTIA}}{\text{CURVATURE}}.
\eeq
We now express the dynamic boundary condition more explicitly.
 First note
$
\abs{\fixed{B}(t)} = \abs{a\ndim{\fixed{B}}(\ndim{\fixed{t}})} = a^{3} \abs{\ndim{\fixed{B}}(\ndim{\fixed{t}})}.
$
Furthermore, the bubble pressure at equilibrium is given by 
$
P_{eq}\ =\ \frac{k}{\left(\frac{4\pi}{3}a^3\right)^\gamma}\ 
\implies\ k=P_{eq}\ \left(\frac{4\pi}{3}a^3\right)^\gamma
\ \implies\ P_{B(t)} = P_{eq}\ \left( \frac{\frac{4\pi}{3}a^3}{ |B(t)|} \right)^{\gamma}$.
Thus, using the expression for $P_{eq}$ in~\eqref{eq:Peq}, we have 
\begin{equation}
\begin{split}
{\ndim{\fixed{P}}}_{\ndim{\fixed{B}}} &= \frac{\fixed{P_{\fixed{B}}}}{\rho_{\infty}U^{2}}  
\ =\ \frac{P_{\text{eq}}}{\rho_{\infty}U^{2}} \left( \frac{\frac{4\pi}{3}}{\abs{\ndim{\fixed{B}}(\ndim{\fixed{t}})}} \right)^{\gamma} \\
&=\ \left( \frac{p_{\infty}+2\sigma/a}{\rho_{\infty}U^{2}} \right) \left( \frac{\frac{4\pi}{3}}{\abs{\ndim{\fixed{B}}(\ndim{\fixed{t}})}} \right)^{\gamma}\ =\  \left( \frac{\Ca}{2} + \frac{2}{\We} \right) \left( \frac{\frac{4\pi}{3}}{\abs{\ndim{\fixed{B}}(\ndim{\fixed{t}})}} \right)^{\gamma}
\label{eq:ndim-PB}
\end{split}
\end{equation}
Here, $\Ca$ denotes the  {\it Cavitation number}
\beq
\Ca = \frac{p_{\infty}}{\tfrac{1}{2}\rho_{\infty}U^{2}} = \frac{\text{EXTERNAL PRESSURE}}{\text{KINETIC ENERGY PER VOLUME}}.
\eeq
The nondimensional system is given by Equations~(\ref {eq:ndim-euler}),  (\ref {eq:ndim-mass}),  (\ref {eq:ndim-kinematic}) and~(\ref {eq:ndim-dynamic}).

Finally, we conclude this section by displaying the spherical bubble equilibrium  solution in non-dimensional variables:
\begin{align}
\ndim{\fixed{\vect{u}}} &=\vect{0},\ \ \ 
\ndim{\fixed{\vect{R}}}=\unitr(\theta,\phi),\ \ \ 
\ndim{\fixed{p}} =\frac{1}{2} \Ca,\ \ \ 
\ndim{\fixed{\rho}} =1,\ \ \ 
\ndim{\fixed{P}}{}_{\ndim{\fixed{B}}} = \frac{\Ca}{2}+\frac{2}{\We}
\label{eq:nondimbub}
\end{align} 
%
%
%
% In (moving) center of mass coordinates we use \moving{}
%
\subsection{Nondimensional Euler equations in Center of Mass Coordinates}
The bubble's dimensional center of mass is defined
\beq
\com(\fixed{t}) := \int_{\fixed{B}(\fixed{t})} \fixed{\vect{x}} \,d\fixed{\vect{x}}
\eeq
and is nondimensionalized by $\com(t) = a\, \ndim{\com{}}(\ndim{t})$.
In this section, variables and operators defined with respect to the moving coordinate system are denoted with a prime ($'$). Let the moving coordinates be defined
\begin{align}
\ndim{\moving{\vect{x}}}(\ndim{\fixed{t}}) := \ndim{\fixed{\vect{x}}} - \ndim{\com{}}(\ndim{\fixed{t}})
\end{align}
such that 
\beq
\int_{\ndim{\moving{B}}(\ndim{\fixed{t}})} \ndim{\moving{\vect{x}}} \;d\moving{\ndim{\vect{x}}} = \vect{0}.
\eeq
We will use the following substitutions.
\begin{align}
\grad_{\ndim{\fixed{\vect{x}}}} &= \grad_{\ndim{\moving{\vect{x}}}},\ \ \ \unitn = \moving{\unitn},\ \ \ 
\ndim{\fixed{\vect{R}}} = \ndim{\moving{\vect{R}}} + \ndim{\com{}}(\ndim{\fixed{t}}) \nn \\
\ndim{\fixed{\vect{u}}} (\ndim{\fixed{\vect{x}}},\ndim{\fixed{t}}) &= \ndim{\moving{\vect{u}}} (\ndim{\fixed{\vect{x}}}-\ndim{\com{}}(\ndim{\fixed{t}}),\ndim{\fixed{t}}) \ = \ndim{\moving{\vect{u}}} (\ndim{\moving{\vect{x}}},\ndim{\fixed{t}})\nn \\
\ndim{\fixed{\rho}}(\ndim{\fixed{\vect{x}}},\ndim{\fixed{t}})  &= \ndim{\moving{\rho}}(\ndim{\moving{\vect{x}}},\ndim{\fixed{t}}),\ \ \ \\ 
\ndim{\fixed{p}}(\ndim{\fixed{\vect{x}}},\ndim{\fixed{t}})  &= \ndim{\moving{p}}(\ndim{\moving{\vect{x}}},\ndim{\fixed{t}})
\ndim{\fixed{P}}{}_{\ndim{\fixed{B}}} (\ndim{\fixed{\vect{R}}},\ndim{\fixed{t}})  = \ndim{\moving{P}}{}_{\ndim{\moving{B}}} (\ndim{\moving{\vect{R}}},\ndim{\fixed{t}}),  
\ndim{\fixed{c}}(\ndim{\fixed{\vect{x}}},\ndim{\fixed{t}})  &= \ndim{\moving{c}}(\ndim{\moving{\vect{x}}},\ndim{\fixed{t}}).\nn
\end{align}
The resulting system of nonlinear equations is:
\begin{subequations} 	\label{eq:euler-com}
\begin{alignat}{2}
&\vect{0} = ( \D_{\ndim{\fixed{t}}}\!-\D_{\ndim{\fixed{t}}} \ndim{\com{}} \cdot \grad_{\ndim{\moving{\vect{x}}}}) \ndim{\moving{\vect{u}}}  + \left(  \ndim{\moving{\vect{u}}} \!\cdot \grad_{\ndim{\moving{\vect{x}}}} \right)  \ndim{\moving{\vect{u}}}  +  \frac{1}{\ndim{\moving{\rho}}} \grad_{\ndim{\moving{\vect{x}}}} \ndim{\moving{p}}, &\;\;\;& \ndim{\moving{\vect{x}}} \!\in\! \R^{3}\!\setminus\! \ndim{\moving{B}}(\ndim{\fixed{t}})	\\
&0 = ( \D_{\ndim{\fixed{t}}}\!-\D_{\ndim{\fixed{t}}} \ndim{\com{}} \cdot \grad_{\ndim{\moving{\vect{x}}}}) \ndim{\moving{p}}  +  \grad_{\ndim{\moving{\vect{x}}}} \ndim{\moving{p}} \cdot \ndim{\moving{\vect{u}}}  \!+\!  \ndim{\moving{c}}{}^{2} \ndim{\moving{\rho}} \;\grad_{\ndim{\moving{\vect{x}}}} \!\cdot \ndim{\moving{\vect{u}}}, && \ndim{\moving{\vect{x}}} \!\in\! \R^{3}\!\setminus\!\ndim{\moving{B}}(\ndim{\fixed{t}})			\\
&\ndim{\moving{\vect{u}}} \cdot \moving{\unitn} = \D_{\ndim{\fixed{t}}}  \!\left( \ndim{\moving{\vect{R}}} + \ndim{\com{}} \right)\cdot \moving{\unitn}, && \ndim{\moving{\vect{x}}} \in \D\ndim{\moving{B}}(\ndim{\fixed{t}})   \\
&\ndim{\moving{P}}{}_{\ndim{\moving{B}}} \!-\! \ndim{\moving{p}} = \frac{1}{\We}\, \grad_{\ndim{\moving{\vect{x}}}} \!\cdot \moving{\unitn}, && \ndim{\moving{\vect{x}}} \in \D\ndim{\moving{B}}(\ndim{\fixed{t}}) 	\\
&\vect{0} = \int_{\ndim{\moving{B}}} \ndim{\moving{\vect{x}}} \;d\ndim{\moving{\vect{x}}}.
\end{alignat}
\end{subequations}
\subsection{Linearization of Euler equations about the spherical equilibrium} \label{sec:euler-linear}
 To simplify the notation, we drop asterisks and primes in Equations~(\ref {eq:euler-com}). and linearize about the nondimensional equilibrium~\eqref{eq:nondimbub} by taking:
 \begin{align}
 \vect{u} &= \vect{0} + \delta\, \vect{u}_{1} + \bigO\!\left(\delta^2\right),\ \ 
\vect{R}(\theta,\phi,t) = R(\theta,\phi,t)\, \unitr =
\left[ 1 + \delta\, R_{1}(\theta,\phi,t) + \bigO\!\left(\delta^2\right)\right] \unitr \label{eq:Rthetaphit} \\
p &= \frac{\Ca}{2} + \delta\, p_{1} + \bigO\!\left(\delta^2\right);\ \ p=p(\rho),\ \ 
\rho = 1 + \delta\, \rho_{1} + \bigO\!\left(\delta^2\right) \\
\com(t) &= \vect{0} + \delta\, \comone(t) + \bigO\!\left(\delta^2\right),\ \ \ 
\comone(t) =  \xi_{1,x}(t)\unitx+\xi_{1,y}(t)\unity+\xi_{1,z}(t)\unitz 
\label{delta-expand} \end{align} 

Consider initial conditions that are perturbations from equilibrium of size $\delta,\,\delta\le1$. We expand quantities in Equations~(\ref {eq:euler-com}) in powers of $\delta$:
$
c^{2} = \frac{dp}{d\rho}(1+\delta\rho_1+\dots)\ =\  \frac{1}{\Ma^{2}} + \delta\, {c_{1}}^{2} + \bigO\!\left(\delta^2\right)$,  
where $\Ma$ denotes the {\it Mach number}
\beq
\Ma= \left( \frac{dp}{d\rho}(1)\right)^{-1}=\frac{U}{c_{\infty}} = \left[ \frac{\text{INERTIA}}{\text{COMPRESSIBILITY}} \right]^{1/2}.
\eeq

In addition to the above expansions in powers of $\delta$, we require the implied expansions for $\unitn$ and $\vect{div}\ \unitn$.
From the relation
\begin{equation}
F(r,\theta,\phi, t) \,\equiv\,  r\ -\ R(\theta,\phi,t)\ =\ 0\nn 
\end{equation}
which defines the bubble surface, $\D B(t)$, we have using the polar coordinate representations,~\eqref{grad-polar} and~\eqref{div-polar},  for $\vect{grad}$ and ${\rm div}$ %%
\begin{align}
\begin{split}
\left.\unitn\,\right|_{r=R}   \ &=\ \left.\frac{\vect{grad}\ F}{|\vect{grad}\ F|}\,\right|_{r=R} 
\ =\ \frac{\unitr-\frac{1}{R}\D_\theta R\ \unittheta-\frac{1}{R\sin\theta}\D_\phi R\ \unitphi }{ \left(1+ \frac{1}{R^2}(\D_\theta R)^2+\frac{1}{R^2\sin^2\theta}(\D_\phi R)^2 \right)^{1\over2}}
\\ &= 1\cdot \unitr - \delta\, \left( \dtheta R_{1} \unittheta + \frac{1}{\sin \theta} \dphi R_{1} \unitphi\right) + \bigO\!\left(\delta^2\right), \label{unitn-expand}
\end{split}\\
\begin{split}
\left.\left( {\rm div}\ \unitn \right) \right|_{r=R} &= \left.\left[ {\rm div}\left( 1\cdot\unitr\ -\ \delta\ \D_\theta R_1\ \unittheta\ -\ \delta\ \tfrac{1}{\sin\theta}\ \D_\phi R_1\ \unitphi\ +\ \cO(\delta^2) \right) \right]\right|_{r=R}
  \\ &= 2 - \delta\ (2 + \Delta_{S}) R_{1} + \cO(\delta^{2})
\end{split}
\label{divunitn-expand}
\end{align}
  
It follows that 
\begin{align}
\vect{u} \cdot \unitn &= 0\ + \delta\ \dr \vect{u}_{1} + \cO(\delta^{2})  \nn \\
\dt\com \cdot \unitn &= 0 + \delta\ 
\left(\ \dt\xi_{1,x}(t)\unitx+\dt\xi_{1,y}(t)\unity+\dt\xi_{1,z}(t)\unitz\right)\cdot \unitr + \cO(\delta^{2})\nn\\
&=\ \ \delta\left( \dt\xi_{1,x}(t)\sin\theta\cos\phi\ +\ 
 \dt\xi_{1,y}(t)\sin\theta\sin\phi\ +\ \dt\xi_{1,z}(t)\cos\theta\ \right) + 
 \cO(\delta^2)\nn\\
\dt \vect{R} \cdot \unitn &= 0\ + \delta\ \dt R_{1} + \cO(\delta^{2})\nn 
\end{align}

For the bubble pressure, we use the following expansion 
of the volume, $|B(t)|$:
%
%\ednote{ASK: This can be shortened and simplified using the inner products.}
%
\begin{align*}
|B(t)| = \int_0^\pi \!\!d\phi  \int_0^{2\pi}\!d\theta\, \sin\theta \!\!\int\limits_0^{R(\theta,\phi,t)}\! r^2  dr= \frac{4}{3}\pi+\delta \!\int_0^\pi \!d\phi \!\int_0^{2\pi}\!R_1(\theta,\phi,t) \sin\theta\ d\theta + \cO(\delta^2) 
\end{align*}
Therefore,
\begin{equation}
\left(\frac{\frac{4\pi}{3}}{\bigabs{{B}(t)}}\right)^{\gamma} =\  1 -\ 3\gamma\ \delta\  \langle Y_0^0 , R_{1} \rangle_{L^2(S^2)}\ Y_0^0 + \cO(\delta^{2}),\ \ \ {\rm where}\ \ Y_0^0(\theta,\phi) = \frac{1}{2} \sqrt{\frac{1}{\pi}} .
\nn\end{equation}

The center of mass constraint can be expanded is follows:
\begin{align}
\vect{0} &= \int_{B(t)}\bx d\bx =\int_0^{2\pi}\ d\theta\  \int_0^\pi\ d\phi\ 
 \int_0^{R(\theta,\phi,t)}\ r\ \unitr(\theta,\phi)\ r^2 \sin\theta\ dr\\  
             &= 
              \frac{1}{4}\ \int_0^{2\pi} \int_0^\pi 
             R^4(\theta,\phi,t)
             \begin{pmatrix}
             \sin\theta \cos\phi\\
             \sin\theta \sin\phi\\
             \cos\theta
             \end{pmatrix}  
              \sin\theta\ d\theta\ d\phi \nn 
\end{align}
Since $R=1 + \delta\ R_1 + \cO(\delta^2)$, we have at order $\delta$:
\begin{equation}
\int_{0}^{2\pi} \int_{0}^{\pi} 
             R_1(\theta,\phi,t)
             \begin{pmatrix}
             \sin\theta \cos\phi\\
             \sin\theta \sin\phi\\
             \cos\theta
             \end{pmatrix}  
              \sin\theta\ d\theta\ d\phi = \vect{0}
              \label{R1orthog-spherical}
\end{equation}
Equivalently, we can express these three orthogonality conditions
in terms of spherical harmonics of degree $l=1$: $Y_l^1(\theta,\phi)$, $Y_1^0(\theta,\phi)$ and $Y_1^{-1}(\theta,\phi)$.  Indeed, by Equation~(\ref {Ypm10})
%equations (\ref{Y1-1}-\ref{Y11})
%
\begin{align}
\sqrt{\frac{3}{2\pi}} \sin\theta \cos\phi &= Y_1^{-1} - Y_1^1,\ \ 
\sqrt{\frac{3}{2\pi}} \sin\theta \sin\phi = i\left( Y_1^{-1} + Y_1^1 \right),\ \ 
 \frac{1}{2}\sqrt{\frac{3}{\pi}}\ \cos\theta =  Y_1^{0}\label{Yc}.
\end{align}
Therefore, the three orthogonality conditions of Equation~\eqref{R1orthog-spherical} are equivalent to
$\langle R_1(\cdot,t) , Y_1^m(\cdot)\rangle_{L^2(S^2)} = 0$, $ m=-1,0,1$.
We can also re-express $\D_t\com\cdot\unitn$, given above, in terms of $Y_1^m$ as follows
%
%{ \small{
 \begin{align}
& \left(\D_t\com\cdot\unitn\right)(\theta,\phi,t)\ =\ 
\delta\ \left( \D_t\com\cdot\unitr \right)(\theta,\phi,t)+ \cO(\delta^2)\nn\\
 &= \sqrt{\frac{\pi}{3}} \delta \left(\!\sqrt{2}\dt\xi_{1,x}(t)\left(Y_{-1}^1-Y_1^1\right) \!+\! \sqrt{2}\, i \,
 \dt\xi_{1,y}(t)  \left(Y_1^{-1}+Y_1^1\right)\! +  \!2\dt\xi_{1,z}(t) Y_1^0 \!\right) \!+\!  \cO(\delta^2). \label{dtcomdotn}
 \end{align}
%}}
%
Substitution of these expansions into Equations~(\ref {eq:euler-com}) yields the following equations for the linearized perturbations to the fluid phase:
\begin{align}
0 &= \dt \vect{u}_1 +  \grad p_{1},\ \ \ \ 
0 = \dt p_{1} + \frac{1}{\Ma^{2}}  \div \vect{u}_{1}
\end{align}

Assuming the flow is irrotational, we have
$
 \vect{u}\ =\ \delta\ \vect{u}_1\ +\ \delta^2\ \vect{u}_2\ +\ \cO(\delta^3)\ =\  \delta\ \grad \varphi_{1}\ + \ \delta^2\ \grad \varphi_{2}\ +\ \cO(\delta^3).
$
Since $\vect{u}_1=\grad{\varphi}_1$, we have
 $\vect{0} = \grad \left(\dt \varphi_{1} + p_{1} \right)$ and therefore
\begin{align} 
0 = \dt \varphi_{1} + p_{1}, \ \  \
0 = \dt p_{1} + \frac{1}{\Ma^{2}}  \Delta\varphi_{1}\nn
\end{align}
implying the wave equation for $\varphi_1$:
\beq
\Ma^{2} \dtt \varphi_{1} - \Delta \varphi_{1} = 0, \quad r>1,\ t>0
\label{eq:wave}\eeq
The linearized boundary conditions and center of mass constraint are 
\begin{align}
&\dr \varphi_{1} = \dt R_{1} +  \left(\D_t\com\cdot\unitr\right)(\theta,\phi,t), &r=1,\ t\ge0 \label{kinematicR1}\\
&\dt \varphi_{1} =\ 3 \gamma\ \left( \frac{\Ca}{2} + \frac{2}{\We} \right)\ \langle Y_0^0 , R_{1} \rangle Y_0^0  - \frac{1}{\We} (2 + \Delta_{S}) R_{1} , & r=1,\ t\ge0 \label{dynamic-phi1}\\
&\langle R_{1}(\cdot,t), Y_{1}^{m} \rangle = 0,  & |m|\le 1 \label{R1orthog}
\end{align}
As initial conditions we take:
\beq  \label{initial-data}
\begin{split} 
&\left. R_1(\theta,\phi,t)\right|_{t=0}\ \  {\rm sufficiently\ smooth}\\
& \left. \varphi_1(r,\theta,\phi,t)\right|_{t=0},\ \left.\D_t\varphi_1(r,\theta,\phi,t)\right|_{t=0}\ \equiv0,\ \ r>1,\\
&\left. \comone(t)\right|_{t=0}=\vect{0}.
\end{split}
\eeq
\begin{proposition}\label{proposition:xi10}
Let $\varphi_1(r,\theta,\phi,t),\ R_1(\theta,\phi,t),\ \comone(t)$ denote a sufficiently regular solution of the initial-boundary value problem for the wave equation~\eqref{eq:wave} on the region $r>1$ with 
  linearized kinematic and dynamic boundary conditions~\eqref{kinematicR1} and~\eqref{dynamic-phi1},\ linearized coordinate normalization  (center of mass) condition,\ and initial conditions~\eqref{initial-data}.
Then, 
\begin{align}
\comone(t) &= \vect{0},\ \ t\ge0.
\end{align}
It follows that the linearized perturbation: $\Psi=\varphi_{1}$, $\beta=R_{1}$ satisfies the initial-boundary value problem~\eqref{eq:n3linear},~\eqref{ib-data} of the Introduction.
\end{proposition}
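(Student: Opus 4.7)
\emph{Proof proposal.}
The strategy exploits a key coincidence at the $l=1$ level: the surface-tension operator $2+\Delta_S$ annihilates the span of $\{Y_1^m\}$ (since $-\Delta_S Y_1^m = 2 Y_1^m$), while the coordinate normalization~\eqref{R1orthog} forces $\langle R_1(\cdot,t), Y_1^m\rangle \equiv 0$. Together these make the linearized dynamic boundary condition trivial on the $l=1$ sector, from which a small over-determined system will pin down $\comone(t)$.

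First I would expand in spherical harmonics: $\varphi_1^{l,m}(r,t) := \langle \varphi_1(r,\cdot,t), Y_l^m\rangle_{L^2(S^2)}$ and analogously for $R_1$. Projecting~\eqref{dynamic-phi1} onto $Y_1^m$, $|m|\le 1$, the $Y_0^0$-term vanishes by orthogonality to $Y_1^m$, and the curvature term vanishes because $(2+\Delta_S)Y_1^m = 0$ combined with $\langle R_1,Y_1^m\rangle=0$. Hence $\D_t \varphi_1^{1,m}(1,t) = 0$. Since $\varphi_1 \equiv 0$ at $t=0$ by~\eqref{initial-data}, the trace $\varphi_1^{1,m}(1,0) = 0$, so $\varphi_1^{1,m}(1,t) \equiv 0$ for all $t\ge 0$. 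Now $\varphi_1^{1,m}(r,t)$ solves the radial wave equation on $r>1$ with zero Cauchy data at $t=0$ and homogeneous Dirichlet data at $r=1$; by the classical energy estimate for the exterior Dirichlet problem, combined with finite propagation speed to sidestep any behavior at infinity, $\varphi_1^{1,m}(r,t)\equiv 0$ for $r\ge 1$, $t\ge 0$. In particular $\D_r \varphi_1^{1,m}(1,t) = 0$.

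To conclude, I would project the kinematic condition~\eqref{kinematicR1} onto $Y_1^m$. Using $R_1^{1,m}\equiv 0$ and the preceding step,
\[
0 \,=\, \D_r \varphi_1^{1,m}(1,t) \,=\, \bigl\langle \D_t\comone(t)\cdot\unitr,\, Y_1^m\bigr\rangle_{L^2(S^2)}, \qquad |m|\le 1.
\]
By the explicit formulas~\eqref{Yc}, the three cartesian components of $\unitr$ are invertible real-linear combinations of $\{Y_1^{-1},Y_1^0,Y_1^1\}$, so the map $\D_t\comone(t) \mapsto (\langle \D_t\comone\cdot\unitr, Y_1^m\rangle)_{|m|\le 1}$ is an isomorphism $\R^3\to\C^3$ onto its image (equivalently, use~\eqref{dtcomdotn} directly). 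Hence $\D_t\comone(t) = \vect{0}$, and combined with $\comone(0) = \vect{0}$ this yields $\comone(t) \equiv \vect{0}$. The only technical step of substance is the uniqueness assertion for $\varphi_1^{1,m}$ in the middle paragraph, but with zero Cauchy data and homogeneous Dirichlet data at $r=1$ this is a textbook consequence of the energy identity for the radial wave operator (in contrast to the Neumann setting of Theorem~\ref{theorem:NtD}, no scattering-resonance analysis is required here).
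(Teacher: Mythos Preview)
Your proof is correct and follows essentially the same route as the paper: project the dynamic boundary condition onto $Y_1^m$ to obtain a homogeneous Dirichlet condition, invoke uniqueness for the exterior wave equation to kill the $l=1$ component of $\varphi_1$, then project the kinematic condition to force $\D_t\comone=\vect{0}$. The only difference is cosmetic: the paper applies uniqueness to $U=\mathcal{P}_1^m\D_t\varphi_1$ (for which the Dirichlet condition is immediate from~\eqref{dynamic-phi1}) and then separately argues that the time-independent $\mathcal{P}_1^m\varphi_1$ vanishes via its harmonic radial profile, whereas you first integrate $\D_t\varphi_1^{1,m}(1,t)=0$ in time and then apply uniqueness directly to $\varphi_1^{1,m}$\,---\,a slightly more economical path to the same conclusion.
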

%
%\bigskip

To prove Proposition~\ref{proposition:xi10}, we first 
introduce the  projection operators onto the $|m|=1$ spherical harmonics
\begin{equation}
\mathcal{P}_{1}^{m}=\langle\,\cdot\,,Y_{1}^{m}\rangle_{L^2(S^2)} \ Y_{1}^{m}.\label{P1mdef}
\end{equation}
Since ${\cal P}_1^m$ commutes with $\D_t$ and $\Delta$, for any $|m|\le 1$, the function $U= \mathcal{P}_{1}^{m} \dt\varphi_{1}$ satisfies the wave equation
\begin{equation}
\left( \Ma^2\D_t^2 - \Delta \right) U = 0, \ r>1 
\label{waveU}\end{equation}
Moreover,  applying $\mathcal{P}_{1}^{m}$ to the dynamic boundary condition~\eqref{dynamic-phi1} and using~\eqref{R1orthog} yields the Dirichlet boundary condition
\begin{equation}
U = 0, \ \ r=1.
\label{UDirichlet}
\end{equation}
It follows that $U=\mathcal{P}_{1}^{m} \dt\varphi_{1}$ is identically zero on $r>1$. Therefore, 
\begin{equation}
\mathcal{P}_{1}^{m} \varphi_{1}(r,\theta,\phi,t) = F(r)\ Y_1^m(\theta,\phi).
\nn\end{equation}
Since $\mathcal{P}_1^m \varphi_1$ is a time-independent solution of the wave equation for $r>1$, clearly $\Delta \mathcal{P}_1^m \varphi_1=0$, and therefore $(\Delta_{r}-2r^{-2})F(r)=0$. 
%
%\ednote{MIW:{\bf Earlier we put $\Delta F(r)=0$, which is %incorrect.ASK: Good find. I made the radial Laplacian explicit %with the subscipt.}}
%
Thus, $F(r) = c_1\, j_1(r) + c_2 \,y_1(r)$ for some constants $c_1, c_2$ and therefore
\beq \label{c1c2}
\begin{aligned}
\mathcal{P}_{1}^{m} \varphi_{1}(r,\theta,\phi,t) = \left( c_1\, j_1(r) + c_2\, y_1(r)\right)\ Y_1^m(\theta,\phi),\ \ {\rm and}\\
\D_r \mathcal{P}_{1}^{m} \varphi_{1}(r,\theta,\phi,t) = \left( c_1 \,\D_r j_1(r) + c_2\, \D_r y_1(r)\right)\ Y_1^m(\theta,\phi).
\end{aligned}
\eeq
Evaluating~\eqref{c1c2} at $r=1$ and $t=0$ we have, using the initial condition 
 $\varphi_1(r,\theta,\phi,t=0)=0$ for $ r>1$, 
 \begin{equation}
 c_1\,j_1(1)+c_2\,y_1(1)=0,\qquad c_1\, \D_r j_1(1)+c_2\, \D_ry_1(1)=0.
 \nn\end{equation}
 implying $c_1=c_2=0$, by linear independence of $\{j_1(r),y_1(r)\}$.
Now applying  $ \mathcal{P}_1^m$ to the kinematic boundary condition~\eqref{kinematicR1} and using~\eqref{R1orthog}, we obtain
 \begin{equation}
 \left\langle\ Y_1^m , \left(\D_t\comone(t)\cdot\unitr\right)\  \right\rangle_{L^2(S^2)}\ =\ 0,\ \ |m|\le1
 \label{xi1r-orthog}
 \end{equation}
 
 We now claim that $\D_t\comone(t)=\vect{0},\ \ t\ge0$ and therefore $\comone(t)=\vect{0},\ t\ge0$. To see this, observe that 
\begin{align}
\D_t\comone(t)\cdot\unitr\ &=\ \D_t\xi_{1x}(t)\sin\theta\cos\theta
 \ +\ \D_t\xi_{1y}(t)\sin\theta\sin\phi\ +\ \D_t\xi_{1z}(t)\cos\theta\nn\\
 &= \sqrt{\frac{2\pi}{3}} \left[\D_t\xi_{1x}(t) (Y_1^{-1}\!-\!Y_1^1) +
  i \, \D_t\xi_{1x}(t) (Y_1^{-1}\!+\!Y_1^1) + 
   \tfrac{1}{\sqrt{2}}\D_t\xi_{1z}(t) Y_1^0 \right].\nn
 \end{align}
 Taking  inner product with $Y_1^m$, $|m|\le1$, yields 
$
 \D_t\xi_{1x}(t)=\D_t\xi_{1y}(t)= \D_t\xi_{1z}(t)= 0$, $t\ge0
 $. This completes the proof.
%%end appendix C Euler
%
%%	\input{special_func}%DE
\section{Hankel Functions} \label{app:hankel}
%
%\subsection{Definition} \label{app:hankeldef}
Separation of variables, applied to the to the wave equation in $\R^{3}$, in spherical coordinates, leads to the spherical Bessel's equation 
\begin{equation}
r^{2} R''(r) + 2 r R'(r)+\left(r^{2}-l(l+1)\right)R(r) = 0.
\end{equation}
Two linearly independent solution are the spherical Bessel functions
\begin{align}
&j_{l}(r) = \sqrt{\frac{\pi}{2r}} J_{l+1/2}(r),\ \ \qquad
y_{l}(r) = \sqrt{\frac{\pi}{2r}} Y_{l+1/2}(r)
\end{align}
where $J_{l+1/2}$ and $Y_{l+1/2}$ are Bessel functions of order $l+1/2$.
The outgoing spherical Hankel function $h_{l}^{(1)}(z)$,  $z\in\C$, is obtained by taking  a linear combination of $j_l$ and $y_l$:
\beq
h_{l}^{(1)}(z) = j_{l}(z) +i y_{l}(z).
\eeq
In terms of the Hankel functions, $H_{l+1/2}^{(1)}(z) = J_{l+1/2}(z) + i Y_{l+1/2}(z)$ and 
\beq
h_{l}^{(1)}(z) = \sqrt{\frac{\pi}{2z}} H_{l+1/2}^{(1)}(z). \label{eq:hlHnu}
\eeq
\subsection{Polynomial Representation} \label{app:polyhank}
%{
We can represent the spherical Hankel functions as in~\cite{Taylor:1998hz} by
\beq
h_{l}^{(1)}(z) = z^{-l-1}p_{l}(z)e^{iz}\label{eq:poly-rep-hank}
\eeq
where $p_{l}(z)$ is the polynomial of order $l$ given by
\begin{align}
p_{l}(z) = i^{-l-1} \sum_{k=0}^{l}\left(\frac{i}{2}\right)^{k} \frac{(l+k)!}{k!(l-k)!} z^{l-k} 
= \sum_{n=0}^{l} \frac{i^{-n-1}}{2^{l-n}} \frac{(2l-n)!}{(l-n)!n!} z^{n} 
\equiv \sum_{n=0}^{l} a_{n}^{l} z^{n} \label{eq:poly-sum}
\end{align}
%%%%%%%%%%%%%%
%%%%%%%%%%%%%%
\subsection{Analytic continuation}
The Hankel functions have the following analytic continuations and symmetries
\begin{align}
H_{\nu}^{(1)}(z e^{\pi i}) &= -e^{-\nu \pi i} H_{\nu}^{(2)}(z) & H_{\nu}^{(1)}(\bar{z}) &= \overline{H_{\nu}^{(2)}(z)} \label{eq:analy1}\\
H_{\nu}^{(2)}(z e^{-\pi i}) &= -e^{\nu \pi i} H_{\nu}^{(1)}(z) & H_{\nu}^{(2)}(\bar{z}) &= \overline{H_{\nu}^{(1)}(z)} = -e^{\nu \pi i}H_{\nu}^{(1)}(\bar{z}e^{\pi i}) 
\label{eq:analy2}
\end{align}
%%%%
\subsection[Derivatives]{Derivatives} \label{app:derhank}
The derivative of a spherical Hankel function can be expressed in terms of spherical Hankel functions in a variety of ways:
\begin{align}
\frac{d}{dz}h_{l}^{(1)}(z) &= \frac{l h_{l}^{(1)}(z)}{z} - h_{l+1}^{(1)}(z) = h_{l-1}^{(1)}(z) - \frac{(l+1) h_{l}^{(1)}(z)}{z} 
\label{eq:diff-hank}
\end{align}
These relations are also valid for $j_{l}$, $y_{l}$, $h_{l}^{(2)}$. Similarly,
\begin{align}
\frac{d}{dz} H_{\nu}^{(1)}(z) &= \frac{\nu H_{\nu}^{(1)}(z)}{z} - H_{v+1}^{(1)}(z) = H_{v-1}^{(1)}(z) - \frac{\nu H_{\nu}^{(1)}(z)}{z} 
\label{H-recur}\end{align}
These relations are also valid for the Bessel functions $J_{l}$, $Y_{l}$, and the incoming Hankel function $H_{l}^{(2)}$.
\medskip

Finally, a limit we require is a consequence of the above recursions and the polynomial representation of $h^{(1)}$: 
\begin{equation}
\frac{z\D h_l^{(1)}(z)}{h_l^{(1)}(z)} = l-\frac{zh_{l+1}^{(1)}(z)}{h_l^{(1)}(z)}=l-\frac{p_{l+1}(z)}{p_l(z)}\
 \to\ l-(2l+1)\ =\ -(l+1),\ \ z\to0\label{eq:useful-limit}
\nn\end{equation}
%%%%%%%%%%%%%%%%%
%%%%%%%%%%%%%%%%%
\section{Asymptotic Expansions of Special Functions}
%%%%%%%%%%%%%%%%%%%%%%%%%%%%
\subsection{Airy function asymptotics} \label{app:airy}
The Airy function and its derivative can be approximated for \emph{large argument} by (Olver~\cite{Olver:1954fk})  
\begin{align}
Ai(\tau ) &= \frac{1}{2} \pi^{-1/2}\tau ^{-1/4} e^{-\mu}\left[1+\bigO(\abs{\mu}^{-1})\right], & \abs{\arg \tau }&<\pi 	\label{eq:Airy-large-arg}   \\
\D Ai(\tau ) &= -\frac{1}{2} \pi^{-1/2}\tau ^{1/4} e^{-\mu}\left[1+\bigO(\abs{\mu}^{-1})\right], & \abs{\arg \tau }&<\pi	\label{eq:Airyp-large-arg} \\
Ai(-\tau)  &= \pi^{-1/2} \tau^{-1/4} \cos(\mu-\pi/4) \left[1+ \tan(\mu-\pi/4) \bigO(\mu^{-1})\right], & \abs{\arg \tau }&<2\pi/3	 	\label{eq:Airy-large-minus} \\
Ai'(-\tau)  &= \pi^{-1/2} \tau^{1/4} \cos(\mu-3\pi/4) \left[1+ \tan(\mu-3\pi/4) \bigO(\mu^{-1})\right], & \abs{\arg \tau }&<2\pi/3
\end{align}
where $\mu = \frac{2}{3} \tau ^{3/2}$.
Additionally, we have the inequalities~\cite{Olver:1954fk}
\begin{align}
\abs{Ai(\tau)}  &<  C \left( 1 + \abs{\tau}^{1/4}  \right)^{-1}  \bigabs{e^{-\mu}}, & \abs{\arg \tau }&\le\pi	\label{eq:Ai-bound}  \\
\abs{\D Ai(\tau)}  &<  C \left( 1 + \abs{\tau}^{1/4}  \right)  \bigabs{e^{-\mu}}, & \abs{\arg \tau }&\le\pi.	\label{eq:dAi-bound}
\end{align} 
\subsection{Zeros of Airy Functions}\label{sec:airy0}
All of the zeros of the Airy function and its derivative lie on the negative real axis. The larger zeros have the following approximations~\cite{Olver:1954lr,Tokita:1972lr,Watson:1952mc}.

\emph{Asymptotics for zeros of $Ai(z)$:} 
\beq 
\eta_{s} = -\left[\frac{3}{8} \pi (4s-1)\right]^{2/3} + \bigO(s^{-4/3}), \qquad s=s_{0}, \ldots, \left\lfloor \frac{l+1}{2} \right\rfloor.
\label{eq:Airyzeros}
\end{equation}

\emph{Asymptotics for zeros of $\D Ai(z)$:}
%The larger zeros have the approximation~\cite{Olver:1954lr,Tokita:1972lr,Watson:1952mc}
%
\beq 
\eta_{s}' = -\left[\frac{3}{8} \pi (4s-3)\right]^{2/3} + \bigO(s^{-4/3}), \qquad s=s_{0}', \ldots, \left\lfloor \frac{l+1}{2} \right\rfloor + 1.
\label{eq:DAiryzeros}
\eeq
%%%%%%%%%%%%%%%%%%%%%%%%%%%%

\subsection{Hankel function asymptotics} \label{app:unifhankel}
The Hankel functions can be approximated for large order $\nu$, uniformly in $z$ in the sector $\abs{\arg{z}}<\pi$ by

\begin{align} \label{eq:Hnu-nuz}
H_{\nu}^{(1)}(\nu z) &\sim 2 e^{-\pi i/3} \left(\frac{4\zeta}{1-z^{2}}\right)^{1/4} \frac{Ai(e^{2\pi i/3}\nu^{2/3}\zeta)}{\nu^{1/3}} \\
\D H_{\nu}^{(1)}(\nu z) &\sim -4 e^{-\pi i/3} \frac{1}{z}\left(\frac{1-z^{2}}{4\zeta}\right)^{1/4}  \frac{Ai(e^{2\pi i/3}\nu^{2/3}\zeta)}{\nu^{4/3}}
\label{eq:dHnu-nuz}
\end{align}
where
\begin{equation}
\frac{2}{3} \zeta^{3/2} = \int_{z}^{1}\,\frac{\sqrt{1-t^{2}}}{t}\,dt = \log \frac{1+\sqrt{1-z^{2}}}{z} - \sqrt{1-z^{2}}.  	\label{eq:zeta-def}
\end{equation} We note the useful Taylor series
\begin{align}
\frac{2}{3} \zeta^{3/2} &= (-\log (z)+\log(2)-1)+\frac{z^2}{4}+O\left(z^4\right) && \text{ as } z\to 0,  \label{eq:zeta-z0}    \\
&= -\frac{2}{3} i \sqrt{2} (z-1)^{3/2}+\frac{3 i (z-1)^{5/2}}{5 \sqrt{2}}+O\left((z-1)^{7/2}\right) && \text{ as } z\to 1,	\label{eq:zeta-z1}	\\
&= -iz+\frac{i \pi }{2}-\frac{i}{2 z}-\frac{i}{24} \frac{1}{z^3} + \bigO\!\left(\frac{1}{z^4} \right) && \text{ as } z\to\infty.	\label{eq:zeta-zinf}
\end{align} 
which yield the limits
$
\lim_{z\to 0} \zeta(z) = \infty,\ \ \lim_{z\to 1} \zeta(z) = 0,\ \ 
\lim_{z\to \infty} \zeta(z) = \infty. 
$
For $\zeta$ close to zero,~\ref{eq:zeta-def} is inverted to yield
\beq  	\label{eq:zzeta}
z(\zeta) = 1 - 2^{-1/3} \,\zeta + \tfrac{3}{10} 2^{-2/3}\, \zeta^{2} + \tfrac{1}{700}\, \zeta^{3} + \zeta^{4} \bigO(1).
\eeq
%
\begin{comment}
\textcolor{blue}{Using the formulas above and those for the Airy function in Appendix~\ref {app:airy}, we have, for $z\ne 1$ and $\abs{\arg{e^{2\pi i/3}\zeta}}<\pi$
%
\begin{align}
H_{\nu}^{(1)}(\nu z) &=  e^{-\pi i/2} \left(\frac{4}{1-z^{2}}\right)^{1/4} \Bigg[ \frac{1}{\sqrt{\pi}} \exp\left(\frac{2}{3}\zeta^{3/2}\nu\right)\nu^{-1/2} \left(1 + O\left(\zeta^{-3/2}\nu^{-1}\right)\right) \Bigg] \\
%
H_{\nu}^{(1)}{'}(\nu z) &= 2 e^{\pi i/2} \frac{1}{z}\left(\frac{1-z^{2}}{4}\right)^{1/4} \Bigg[ \frac{1}{\sqrt{\pi}} \exp\left(\frac{2}{3}\zeta^{3/2}\nu\right)\nu^{-1/2} \left(1 + O\left(\zeta^{-3/2}\nu^{-1}\right)\right) \Bigg].
\end{align}
}
\end{comment}

%%%%%%%%%%%%%%%%%%%%%%%%%%%%
\subsection{Zeros of Hankel functions}\label{app:hankel-zeros}
Concerning the zeros of  Hankel functions a consequence of the analysis in~\cite{Olver:1954lr} is the following:
\begin{theorem}
Let $\nu=l+1/2$, where $l\ge 2$ is an integer.
The $l+1$ zeros of $\D H_{\nu}^{(1)}(y)$ and the $l$ zeros  of  $H_{\nu}^{(1)}(y)$  lie near arcs in the lower half plane .
\end{theorem}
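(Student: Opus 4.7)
The plan is to reduce the location of zeros of $H_\nu^{(1)}(\nu z)$ and $\D H_\nu^{(1)}(\nu z)$ to the location of zeros of the Airy function and its derivative, via the uniform large-order asymptotics recorded in Appendix~\ref{app:unifhankel}. Precisely, from Equations~\eqref{eq:Hnu-nuz} and~\eqref{eq:dHnu-nuz}, on the sector $|\arg z|<\pi$ we have
\begin{equation*}
H_\nu^{(1)}(\nu z) \sim C_1(z)\,\nu^{-1/3}\, Ai(\eta),\qquad  \D H_\nu^{(1)}(\nu z) \sim C_2(z)\,\nu^{-4/3}\, \D Ai(\eta),\qquad \eta=e^{2\pi i/3}\nu^{2/3}\zeta,
\end{equation*}
where $\zeta=\zeta(z)$ is the analytic map defined by~\eqref{eq:zeta-def} and $C_1(z), C_2(z)$ are nowhere vanishing in the sector. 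Since all zeros of $Ai$ and $\D Ai$ lie on the negative real axis (Appendix~\ref{sec:airy0}), the asymptotics predict zeros precisely when $\eta=\eta_s<0$ or $\eta=\eta_s'<0$, i.e.~on the discrete set
\begin{equation*}
\zeta=\zeta_{\nu,s}\equiv e^{-2\pi i/3}\nu^{-2/3}|\eta_s|,\qquad \text{respectively}\qquad \zeta=\zeta_{\nu,s}'\equiv e^{-2\pi i/3}\nu^{-2/3}|\eta_s'|.
\end{equation*}
These lie on the single ray $\arg\zeta=-2\pi/3$ in the $\zeta$-plane.

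Next I would transport these predicted loci back to the $y$-plane via $y=\nu z(\zeta)$, using the local expansion~\eqref{eq:zzeta} near $\zeta=0$ and the far-field relation~\eqref{eq:zeta-zinf} for large $|\zeta|$. The image of the ray $\arg\zeta=-2\pi/3$ under $z(\zeta)$ is an analytic arc $\mathcal{A}$ emanating from $z=1$ (for $s$ small) and asymptoting into the fourth quadrant (for $s$ large); the factor $e^{-2\pi i/3}$ together with $z(\zeta)=1-2^{-1/3}\zeta+\cdots$ ensures $\Im z<0$ all along the arc. Thus $y=\nu z(\zeta_{\nu,s}')$ for $s=1,\dots,\lfloor(l+1)/2\rfloor+1$ gives approximate locations of $\D H_\nu^{(1)}$-zeros lying on an arc in the lower half plane; by the analytic continuation symmetries~\eqref{eq:analy1}--\eqref{eq:analy2}, each such zero is paired with its reflection $-\bar y$, also in the lower half plane, yielding a second arc. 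For even $l$ (so $\nu=l+1/2$ half-integer) one obtains a single zero on the negative imaginary axis; the count then totals $l+1$. An identical argument applied to $Ai$ instead of $\D Ai$ produces the $l$ zeros of $H_\nu^{(1)}$.

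To convert these asymptotic predictions into rigorous statements about actual zeros, I would use Rouch\'e's theorem along the lines of the argument in Section~\ref{sec:l+1res}. For each target $\zeta_{\nu,s}'$, form a small circle of radius $\sim s^{-1/3}\nu^{-q}$ (with $0<q<1/3$) about the corresponding image point in the $z$-plane; on this circle, the error estimates~\eqref{eq:Ai-bound}--\eqref{eq:dAi-bound} for Airy functions together with the lower bound on $|\D Ai(\eta)|$ established in the proof of Claim~\ref{thm:big-l-rouche} give
\begin{equation*}
\bigl|H_\nu^{(1)}(\nu z)-(\text{leading Airy approximation})\bigr| < \bigl|\text{leading Airy approximation}\bigr|,
\end{equation*}
so that the Hankel function has the same number of zeros inside the contour as $Ai(\eta)$ (respectively $\D Ai(\eta)$), namely exactly one. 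The main obstacle will be the verification of this Rouch\'e inequality uniformly over $s=1,\dots,\lfloor(l+1)/2\rfloor+1$ and uniformly in large $\nu$, which requires the careful Cauchy-integral bounds for the quotient $Ai/\D Ai$ (or $\D Ai/Ai$) near its zeros, handling the small-$s$ and large-$s$ regimes separately. This is essentially the technical content already carried out in~\cite{Olver:1954lr} and replicated in our Section~\ref{sec:l+1res}; the accounting of $l+1$ (respectively $l$) zeros then follows by combining the fourth-quadrant arc with its reflection across the imaginary axis.
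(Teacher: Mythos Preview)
Your proposal is correct and follows essentially the same route as the paper. The paper does not give a self-contained proof of this theorem; it attributes the result to Olver~\cite{Olver:1954lr} and then sketches exactly the argument you describe: pass to the variables $\zeta$ and $\eta=e^{2\pi i/3}\nu^{2/3}\zeta$, use the uniform asymptotics~\eqref{eq:Hnu-nuz}--\eqref{eq:dHnu-nuz} to rewrite $\D H_\nu^{(1)}(\nu z)=0$ as $\D Ai(\eta)+\mathcal{O}(\nu^{-2/3})Ai(\eta)=0$, locate the approximate zeros at the $\D Ai$-zeros on the negative real $\eta$-axis, and transport them back via $\eta\mapsto\zeta\mapsto z\mapsto\nu z$ to an arc in the lower half plane (with the Rouch\'e step and the symmetry $y\mapsto-\bar y$ supplying the rigorous count). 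Your write-up is in fact more detailed than the paper's own sketch.
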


The detailed locations of the previous zeros given in the following 
%%%%%%%%%%%%%%%%%%%%%%%%%%%%
%%%%%%%%%%%%%%%%%%%%%%%%%%%%
\begin{proposition}
The $l$ zeros of $H_{\nu}^{(1)}(y)$ are
\beq
y_{\nu,k} = \nu \left[ 1 + 2^{-1/3}  e^{-2\pi i/3} \nu^{-2/3}\abs{\eta_{k}} + \tfrac{3}{10} 2^{-2/3} e^{-4\pi i/3} \nu^{-4/3} \bigabs{\eta_{k}}^{2} + \bigO \!\left( \nu^{-4/3} \right) \right], 
\eeq
for $k=1,\dotsc,\floor{(l+1)/2}$, where $\eta_{k}$ is the $k$-th zero of $Ai$.

The $l+1$ zeros of $\D H_{\nu}^{(1)}(y)$ are 
\beq 
y_{\nu,k}' = \nu \left[ 1 + 2^{-1/3}  e^{-2\pi i/3} \nu^{-2/3}\bigabs{\eta_{k}'} + \tfrac{3}{10} 2^{-2/3} e^{-4\pi i/3} \nu^{-4/3} \bigabs{\eta_{k}'}^{2} + \bigO \!\left( \nu^{-4/3} \right) \right], \label{eq:dH-zero}
\eeq
for $k=1,\dotsc,\floor{(l+1)/2}+1$ where $\eta_{k}'$ is the $k$-th zero of $\D Ai$.
\end{proposition}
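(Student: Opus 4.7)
The plan is to read off the zero locations directly from the uniform Airy-type asymptotics already in the paper (equations~\eqref{eq:Hnu-nuz},~\eqref{eq:dHnu-nuz}) and then convert the $\zeta$-plane information back to $y=\nu z$ via the local inverse~\eqref{eq:zzeta}. The pre-factor $(4\zeta/(1-z^2))^{1/4}$ (respectively $z^{-1}((1-z^2)/(4\zeta))^{1/4}$) in the asymptotic formulas is analytic and non-vanishing in a fixed neighborhood of $z=1$ (equivalently $\zeta=0$), so zeros of $H_\nu^{(1)}(\nu z)$ (respectively $\D H_\nu^{(1)}(\nu z)$) must come from zeros of $Ai(e^{2\pi i/3}\nu^{2/3}\zeta)$ (resp.\ $\D Ai(e^{2\pi i/3}\nu^{2/3}\zeta)$), up to the asymptotic error.

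Concretely, for the zeros of $H_\nu^{(1)}$: setting $\eta = e^{2\pi i/3}\nu^{2/3}\zeta$, the predicted zero for each $k$ is at $\eta=\eta_k$, i.e.\ $\zeta_k = e^{-2\pi i/3}\nu^{-2/3}\eta_k$. First I would fix a small disk $D_k$ in the $\eta$-plane about $\eta_k$, of radius comparable to the spacing of Airy zeros, on whose boundary $|Ai(\eta)|$ is bounded below by a definite constant (Olver's bounds~\eqref{eq:Ai-bound},~\eqref{eq:dAi-bound} give this). Transport $D_k$ to a disk $D_k^\zeta$ in the $\zeta$-plane (shrinking like $\nu^{-2/3}$) and then, via~\eqref{eq:zzeta}, to a corresponding region $D_k^z$ in the $z$-plane which lies in a fixed neighborhood of $z=1$. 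Now apply Rouché's theorem to the splitting
\[
H_\nu^{(1)}(\nu z) = \underbrace{2e^{-\pi i/3}\!\left(\tfrac{4\zeta}{1-z^2}\right)^{\!1/4}\!\frac{Ai(\eta)}{\nu^{1/3}}}_{f(z)} + \underbrace{\text{remainder}}_{g(z)},
\]
where $|g/f|\to 0$ uniformly on $\partial D_k^z$ as $\nu\to\infty$, because the asymptotic error is $O(\nu^{-1})$ relative to the main term. This yields exactly one zero $y_{\nu,k}$ of $H_\nu^{(1)}$ inside each $\nu D_k^z$. The argument is carried out for $k=1,\ldots,\lfloor(l+1)/2\rfloor$, the range in which the corresponding $\zeta_k$ stay inside the neighborhood of $\zeta=0$ where the local inversion~\eqref{eq:zzeta} is valid.

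To obtain the quantitative formula for $y_{\nu,k}$, I would substitute $\zeta_k = e^{-2\pi i/3}\nu^{-2/3}\eta_k = -e^{-2\pi i/3}\nu^{-2/3}|\eta_k|$ into~\eqref{eq:zzeta}:
\[
z(\zeta_k) = 1 + 2^{-1/3}e^{-2\pi i/3}\nu^{-2/3}|\eta_k| + \tfrac{3}{10}\,2^{-2/3}e^{-4\pi i/3}\nu^{-4/3}|\eta_k|^2 + O(\nu^{-2}),
\]
where the sign flip $\eta_k = -|\eta_k|$ propagates through $\zeta^2\mapsto e^{-4\pi i/3}\nu^{-4/3}|\eta_k|^2$ as stated. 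Multiplying by $\nu$ and absorbing the Rouché localization error (which is $O(\nu^{-4/3})$ relative to $\nu$) yields the claimed expansion. The argument for $\D H_\nu^{(1)}$ is identical after replacing $Ai\mapsto \D Ai$ and $\eta_k\mapsto \eta_k'$; the range $k=1,\ldots,\lfloor(l+1)/2\rfloor+1$ comes from counting $\D Ai$ zeros accessible within the $\zeta=0$ neighborhood.

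The main obstacle, and the step requiring the most care, is the Rouché bookkeeping: one must verify that the $O(\nu^{-1})$ relative error in the uniform Airy expansion~\eqref{eq:Hnu-nuz}--\eqref{eq:dHnu-nuz} really dominates on the boundary $\partial D_k^\eta$ uniformly in $k$ in the stated range, which hinges on the lower bound $|Ai(\eta)|\gtrsim |\eta|^{-1/4}$ (from~\eqref{eq:Airy-large-minus}) combined with the $\nu^{1/3}$ factor in~\eqref{eq:Hnu-nuz}, and on the Olver envelope~\eqref{eq:Ai-bound} to control the remainder on $\partial D_k^\eta$. Once that is in hand, counting the total number of zeros (using that a total of $l$ zeros of $H_\nu^{(1)}$ and $l+1$ of $\D H_\nu^{(1)}$ lie near the relevant arc, together with the $\lambda\mapsto-\overline\lambda$ symmetry of the problem exploited in Theorem~\ref{thm:def-res-general}) accounts for all zeros and completes the proof.
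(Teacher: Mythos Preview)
Your proposal is correct and follows essentially the same route as the paper's own sketch: use the uniform Airy asymptotics~\eqref{eq:Hnu-nuz}--\eqref{eq:dHnu-nuz} to reduce to zeros of $Ai$ (resp.\ $\D Ai$) in the $\eta$-variable, then push back through the maps $\eta\mapsto\zeta\mapsto z\mapsto\nu z$ via~\eqref{eq:zzeta}. The paper does not give a full proof here either---it explicitly defers to Olver and records only the reduced equation~\eqref{eq:dAi-gen-eq}---so your explicit Rouch\'e localization, together with the sign/phase bookkeeping $\eta_k=-|\eta_k|$ feeding into~\eqref{eq:zzeta}, is in fact more detailed than what the paper itself provides and matches the Rouch\'e mechanism the paper deploys in Section~\ref{sec:l+1res} for the closely related deformation-resonance localization.
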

%%%%%%%%%%%%%%%%%%%%%%%%%%%%
%

We comment  on  approximating the zeros of $\D H_{\nu}^{(1)}(\nu z)$ by Equation~(\ref {eq:dH-zero}),
which is closely related to the analysis of Olver~\cite{Olver:1954fk}. The proof is analogous for $H_{\nu}^{(1)}(\nu z)$.  Introduce the  variables 
\begin{align}
\zeta &= \zeta(z), \qquad \eta = \eta(\zeta)=e^{2\pi i/3}\nu^{2/3}\zeta;\ \qquad\text{see Equation~(\ref {eq:zeta-def})}.\ \ 
	\label{eq:etazeta}
\end{align}

Using  the uniform asymptotic approximation for $\D H_{\nu}^{(1)}(\nu z)$ in~\eqref{eq:dHnu-nuz} we can rewrite the equation $\D H_\nu^{(1)}(y)=0$, for $\nu=l+1/2$ large as 
\beq
\D Ai (\eta) + \frac{e^{-2\pi i/3}}{\nu^{2/3}}  Ai(\eta)  \frac{c_{0}}{d_{0}} \left[ 1 + \bigO\!\left( \nu^{-2} \right) \right] = 0. 	\label{eq:dAi-gen-eq}
\eeq
For $\nu$ large, the zeros, $\eta$, are well approximated by zeros, $\{\eta'_k\}$, of $\D Ai(\eta)$; see~\eqref{eq:DAiryzeros}. These lie on the negative real axis; see Figure~\ref{fig:airyp-zeros} (left).
\begin{figure}[!hbt] 
\begin{center}
	\includegraphics[width=.8\textwidth]{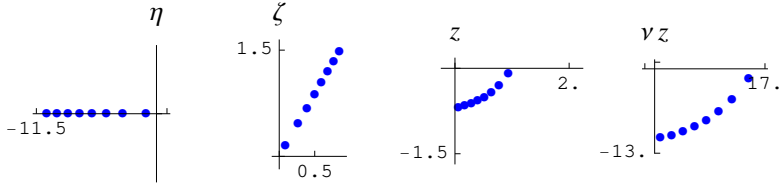}
\caption{\small Complex mappings of some $\D Ai(\eta)$ zeros (left plot) to approximate zeros of $\D H_{\nu}^{(1)}(\nu z)$  (right plot) by transformations $\eta \mapsto \zeta \mapsto z \mapsto \nu z$. The first map, $\eta \mapsto \zeta$, is given by inverting  Equation~(\ref {eq:etazeta}). The second map, $\zeta \mapsto z$, is computed using Equation~(\ref {eq:zzeta}). }
\label{fig:airyp-zeros}
\end{center}
\end{figure}
Then, the asymptotics of Airy functions and the properties of the mapping $\eta\mapsto\zeta\mapsto z\mapsto \nu z$ can be used to show give approximations to the zeros of $\D H_\nu(y)$ and to establish their location near an arc in the lower half plane.
%
%
%%end appendices DE special_func
%
%%	\input{resbound}%F
\section{Proof of Proposition~\ref{proposition:tokitish2}}\label{appendix:resbound}
In this section we prove Proposition~\ref{proposition:tokitish2}, which states:
\beq
\Res_{\lambda=\lambda_{l,j}} \frac{\lambda}{\lambda^{2}+\rlhat G_{l}(\epsilon\lambda)}  = \bigO \!\left( \nu^{-4/3} \right).\label{def-res-bound}
\eeq

 The main tool of this analysis is the uniform asymptotics of Hankel functions. 

\begin{proof}
To compute the residues,  in~\eqref{beta-partial}, first  cancel a factor of $\lambda$,  use the differential equation satisfied by spherical Hankel functions and relations in Appendix~\ref{app:hankel}  to obtain: 
\beq \label{eq:lambda-res-bnd} 
\begin{split} 
&\Res_{\lambda=\lambda_{l,j}} \frac{\lambda}{\lambda^{2}+\rlhat G_{l}(\epsilon\lambda)} \ = \Res_{\lambda=\lambda_{l,j}} \frac{1}{\lambda + \epsilon \rlhat \frac{h_{l}^{(1)'}(\epsilon\lambda)}{h_{l}^{(1)}(\epsilon\lambda)}} 
\ = \lim_{\lambda\to\lambda_{l,j}} \frac{1}{\D_{\lambda}\left( \lambda + \epsilon \rlhat \frac{h_{l}^{(1)'}(\epsilon\lambda)}{h_{l}^{(1)}(\epsilon\lambda)} \right)} \\
&= \!\!\lim_{\lambda\to\lambda_{l,j}}\! \!\left[ 1 + \epsilon^{2}\rlhat 
\Biggl( -1 + \frac{l(l+1)}{(\epsilon\lambda)^{2}} 
-\frac{2}{\epsilon\lambda} \biggl[ \frac{-1}{2\epsilon\lambda} + \frac{H_{\nu}^{(1)'}(\epsilon\lambda)}{H_{\nu}^{(1)}(\epsilon\lambda)} \biggr]  - \biggl[ \frac{-1}{2\epsilon\lambda} + \frac{H_{\nu}^{(1)'}(\epsilon\lambda)}{H_{\nu}^{(1)}(\epsilon\lambda)} \biggr] ^{2}
\Biggr) \!\right]^{-1}\!\!\!. 
%
%\raisetag{100pt}
\end{split}
\eeq
In particular, we have used
\beq
\frac{h_{l}^{(1)'}(z)}{h_{l}^{(1)}(z)} = -\frac{1}{2z} + \frac{H_{\nu}^{(1)'}(z)}{H_{\nu}^{(1)}(z)},\ \ \ 
\ \nu=l+1/2. \nn
\eeq
%.

Now we expand the terms in the last line of~\eqref{eq:lambda-res-bnd} about $\lambda_{l,j}$ for large $l$. We recall the approximation for these resonances from Theorem~\ref{thm:big-l}:
\beq
\lambda_{l,s} = \frac{l+1/2}{\epsilon} \left[1 - 2^{-1/3} (l+1/2)^{-2/3} e^{-2\pi i/3} \eta_{s}' + \bigO (l^{-1})\right]\!, \!\quad s=1, 2,\ldots, \left\lfloor \frac{l+1}{2} \right\rfloor+1.
\eeq
There will be two cases to consider: resonances corresponding to Airy prime zeros $\eta_{s}'$ for $s=1,\dotsc,s_{0}$ which are $\bigO (1)$ and for $s\approx l/2$ which we will see are $\bigO (l^{2/3})$. Specifically,
\begin{align*}
\eta_{s}' &= -\left( \frac{3\pi}{8} \right)^{2/3} (4s-3)^{2/3} + \dotsb, & s=s_{0},\dotsc,\left\lfloor \frac{l+1}{2} \right\rfloor+1\\
&= -\left( \frac{3\pi}{2} \right)^{2/3} \nu^{2/3} + \bigO (\nu^{-2/3}), & s\approx l/2 \to \infty.
\end{align*}
Since
\begin{align}
\frac{l(l+1)}{(\epsilon\lambda)^{2}} &= \frac{(\nu-1/2)(\nu+1/2)}{\nu^{2}\left[ 1-e^{-2\pi i/3} 2^{-1/3}\nu^{-2/3}\eta'_{s} \right]} 
\end{align}
we have
\beq
\frac{l(l+1)}{(\epsilon\lambda)^{2}} = \begin{cases}
	1+ \bigO (\nu^{-2/3}), & s\text{ small}, \\
	\bigO (1), & s \text{ large}.
\end{cases} 
\eeq
For the Hankel terms in~\eqref{eq:lambda-res-bnd}, letting $\epsilon\lambda=\nu z$ and using Appendix~\ref {app:unifhankel}, we have
\beq
\frac{H_{\nu}^{(1)'}(\nu z)}{H_{\nu}^{(1)}(\nu z)}  \sim e^{2\pi i/3} \frac{1}{z}\left( \frac{1-z^{2}}{\zeta} \right)^{1/2} \frac{Ai'(\tau)}{Ai(\tau)} \nu^{-1/3},\ \ \tau=e^{2\pi i/3}\nu^{2/3}\zeta.
\eeq
We simplify this ratio for $z\approx 1$ using the Taylor expansion~\eqref{eq:zeta-z1}, which yields
\begin{align}
\zeta &= 2^{1/3} e^{-\pi i /3}(z-1) + \bigO\!\left( (z-1)^{2} \right).
\end{align}
Thus,
$
\zeta^{-1}(1-z^{2})= -(1+z) \frac{z-1}{\zeta} \
=\ -\frac{1+z}{2^{1/3}e^{-\pi i /3}} \
=\  2^{2/3} e^{-2\pi i /3} + \dotsb$ and 
$e^{2\pi i /3}\nu^{2/3} \zeta =  e^{2\pi i /3} \eta_{s} + \dotsb $, 
where the dots indicate higher order terms.
Now for the Airy functions 
$
\frac{\D Ai(\tau)}{Ai(\tau)} = \bigO (1)
$
for small $s$.
From Appendix~\ref {app:airy}
\beq
\frac{\D Ai(\tau)}{Ai(\tau)} \sim - \tau^{1/2}, \qquad \tau \text{ large},\;\abs{\arg{\tau}}<\pi.
\eeq
So, for large $s$
\begin{align*}
\left.\left( \frac{\D Ai(\tau)}{Ai(\tau)} \right) \right|_{\tau=e^{2\pi i /3}\nu^{2/3} \zeta}  &\sim - \left[ e^{-\pi i /3} \left( \frac{3\pi}{2} \right)^{2/3} \nu^{2/3} \right]^{1/2} \!= - \left[ e^{-\pi i /6} \left( \frac{3\pi}{2} \right)^{1/3} \nu^{1/3}   \right].
\end{align*}
Combining results
\beq
\left.\left( \frac{\D Ai(\tau)}{Ai(\tau)} \right) \right|_{\tau=e^{2\pi i /3}\nu^{2/3} \zeta} =
\begin{cases}
	\bigO (1), & s\text{ small}, \\
	e^{5\pi i /6} \left( \frac{3\pi}{2} \right)^{1/3} \nu^{1/3}+\bigO (\nu^{-2/3}), & s\approx l/2 \text{ large}.
\end{cases}
\eeq

We can now conclude the proof. For $s$ large
\beq
-\frac{1}{2\epsilon\lambda} + \frac{H_{\nu}^{(1)'}(\epsilon\lambda)}{H_{\nu}^{(1)}(\epsilon\lambda)}  = -\frac{1}{2\nu} +\bigO (1) = \bigO (1).
\eeq
Thus,
\begin{align}
\Res_{\lambda=\lambda_{l,j}} \frac{\lambda}{\lambda^{2}+\rlhat G_{l}(\epsilon\lambda)}  &= 
\lim_{\lambda\to\lambda_{l,j}} \left[ 1 + \epsilon^{2}\rlhat 
\left( -1 + \bigO (1)
-\frac{2}{\nu} \bigO (1)  - \bigO (1)^{2}
\right) \right]^{-1} \!\! = \bigO (\nu^{-2}).\label{def-res-bound-a} 
\end{align}

For $s$ small,
\begin{align}
\Res_{\lambda=\lambda_{l,j}} \frac{\lambda}{\lambda^{2}+\rlhat G_{l}(\epsilon\lambda)}  &= 
\lim_{\lambda\to\lambda_{l,j}}  \left[ 1 + \epsilon^{2}\rlhat 
\left( \bigO (\nu^{-4/3}) + \bigO (\nu^{-2/3})
\right) \right]^{-1} \
=\ \bigO (\nu^{-4/3}).\label{def-res-bound-b} 
\end{align}
The asymptotic formula~\eqref{def-res-bound} is a consequence of~\eqref{def-res-bound-a} and~\eqref{def-res-bound-b}.
\qquad\end{proof}
%% end appendix F resbound
%	}
%
%%%%%%%%%%%%%%%%%%%%%%%%%%%%%%%%%%%%%%%%%%%%%%
%\clearpage
\bibliographystyle{siam}
\bibliography{texref}
%
%\end{small}

\end{document}